\definecolor{dubiousEnglish}{rgb}{1,0,.2}
\definecolor{darkgreen}{rgb}{0,0.40,0}
\DeclareMathOperator{\ImaginaryPart}{Im}
\DeclareMathOperator{\Int}{Int}
\DeclareMathOperator{\RealPart}{Re}
\DeclareMathOperator{\Diff}{Diff}
\DeclareMathOperator{\e}{\operatorname{e}}
\newcommand{\restricted}[2]{{\left.{#1}\right|_{#2}}}
\newcommand{\lcan}{{\lambda_{\mathrm{can}}}}
\newcommand{\p}{\partial}
\newcommand{\lie}[1]{{\mathcal{L}_{#1}}}
\newcommand{\abs}[1]{{\left\lvert #1\right\rvert}}
\newcommand{\norm}[1]{{\lVert #1\rVert}}
\renewcommand{\epsilon}{\varepsilon}
\newcommand{\CC}{{\mathbb C}}
\newcommand{\DD}{{\mathbb D}}
\newcommand{\fF}{{\mathcal F}}
\newcommand{\bfp}{{\mathbf p}}
\newcommand{\bfq}{{\mathbf q}}
\newcommand{\RR}{{\mathbb R}}
\renewcommand{\SS}{{\mathbb S}}
\newcommand{\TT}{{\mathbb T}}
\newcommand{\bfu}{{\mathbf u}}
\newcommand{\x}{{\mathbf x}}
\newcommand{\y}{{\mathbf y}}
\newcommand{\z}{{\mathbf z}}
\newcommand{\ZZ}{{\mathbb Z}}
\newcommand{\defin}[1]{\textbf{#1}}
\newcommand{\vol}{\mathrm{vol}}
\theoremstyle{plain}
\newtheorem{theorem}{Theorem}[section]
\newtheorem{lemma}[theorem]{Lemma}
\newtheorem{corollary}[theorem]{Corollary}
\newtheorem{openquestion}[theorem]{Question}
\newtheorem{proposition}[theorem]{Proposition}
\theoremstyle{remark}
\newtheorem{remark}[theorem]{Remark}
\newtheorem{example}[theorem]{Example}
\theoremstyle{definition}
\newtheorem{definition}{Definition}
\newcounter{maintheorem}
\newtheorem{main_theorem}[maintheorem]{Theorem}
\newtheorem*{theorem_inverse_monodromy}{Theorem~\ref{thm: inverse
monodromy same Bourgeois structure}}
\newtheorem*{theorem_weak_filling}{Theorem~\ref{thm: fillings for V
that imply fillings for VxT2}.(a)}
\newtheorem*{theorem_subcrit_filling}{Theorem~\ref{thm: fillings for V
that imply fillings for VxT2}.(b)}
\newtheorem*{theorem_obstruction_subcrit}{Theorem~\ref{thm: main
obstruction result}}
\numberwithin{equation}{section}
\author[S.\ Lisi]{Samuel Lisi}
\address[S.\ Lisi]{
  University of Mississippi \\
  Department of Mathematics \\
  P.O. Box 1848 \\
  University, MS 38677-1848\\
  USA}
\email{stlisi@olemiss.edu}
\author[A.\ Marinković]{Aleksandra Marinković}
\address[A.\ Marinković]{
  Matematicki fakultet\\
  Studentski trg 16\\
  11 000 Belgrade\\
  SERBIA}
\email{aleks@matf.bg.ac.rs}
\author[K.\ Niederkrüger]{Klaus Niederkrüger}
\address[K.\ Niederkrüger]{
  Institut Camille Jordan \\
  Université Claude Bernard Lyon 1\\
  43 boulevard du 11 novembre 1918\\
  F-69622 Villeurbanne Cedex\\
  FRANCE}
\email{niederkruger@math.univ-lyon1.fr}
\title{On properties of Bourgeois contact structures}
\begin{document}

\begin{abstract}
  The Bourgeois construction associates to every contact open book on
  a manifold~$V$ a contact structure on $V\times \TT^2$.
  We study in this article some of the properties of $V$ that are
  inherited by $V\times \TT^2$ and some that are not.
  Giroux has provided recently a suitable framework to work with
  contact open books.
  In the appendix of this article, we quickly review this formalism,
  and we work out a few classical examples of contact open books to
  illustrate how to use this new language.
\end{abstract}

\maketitle

\tableofcontents

\section{Introduction}

In his thesis, Bourgeois used a construction based on work by Lutz
\cite{Lutz_invariantes} that associates to every contact open book on
a contact manifold~$(V,\xi)$ a contact structure on $V\times \TT^2$
that is invariant under the natural $\TT^2$-action and that restricts
on every fiber $V\times\{*\}$ to $\xi$, see \cite{BourgeoisTori}.
Even though all contact structures obtained on $V\times \TT^2$ for
a given $(V, \xi)$ are homotopic as \emph{almost} contact structures
independently of the open book used, Bourgeois proved via contact
homology that the resulting contact structures on $V\times \TT^2$
often do depend on the open book chosen and not only on $\xi$ itself.
This construction is probably the most interesting explicit method
known so far to produce higher dimensional closed contact manifolds
based on lower dimensional ones.
For this reason we consider it an important question to understand
which properties of $(V,\xi)$ are passed on to the associated contact
structure on $V\times \TT^2$.
For instance, Presas constructed the first examples of higher
dimensional overtwisted contact structures
\cite{PresasExamplesPlastikstufes} by gluing together two Bourgeois
structures associated to overtwisted $3$-manifolds.
This raised the question of whether the Bourgeois structure associated
to an overtwisted structure is overtwisted or not. We will show here that 
this is not always the case. 
The list of properties we will be studying are mostly related to the
fillability and tightness of the Bourgeois structures.
Note also the recent article \cite{GironellaBourgeoisStructs} by
Gironella that studies questions about Bourgeois structures related to
ours. 
We discuss the relation of our work to his in Section~\ref{sec:
  bourgeois structures introduction}.
Recall that a general contact structure is either overtwisted or tight
\cite{BormanEliashbergMurphyExistence}, furthermore it is known that
overtwisted manifolds are not even weakly fillable
\cites{NiederkrugerPlastikstufe, WeakFillabilityHigherDimension} (to
drop the semipositivity condition use \cite{Pardon}).
The different types of fillability can be combined to give the
following hierarchy:
\begin{quote}
  subcritically Weinstein fillable $\Rightarrow$ Weinstein fillable
  $\Rightarrow$ exact fillable $\Rightarrow$ strongly fillable
  $\Rightarrow$ weakly fillable $\Rightarrow$ tight.
\end{quote}
For Bourgeois contact structures, we know from
\cite{WeakFillabilityHigherDimension} (and work related to it):

\begin{main_theorem}\label{thm: fillings for V that imply fillings for
    VxT2}
  Let $(V,\xi)$ be a closed contact manifold.
  \begin{itemize}
  \item [(a)] If $(V,\xi)$ is weakly filled by $(W,\omega)$, then
    independently of the open book decomposition used in the
    construction, the associated Bourgeois contact structure on
    $V\times \TT^2$ is isotopic to a contact structure that can be
    weakly filled by $(W\times \TT^2, \omega\oplus \vol_{\TT^2})$.
  \item [(b)] If $(V,\xi)$ admits a Weinstein filling that is a
    $k$-fold stabilization, and if $(K,\vartheta)$ is the canonical
    open book associated to such a subcritical filling, then the
    corresponding Bourgeois structure on $V\times \TT^2$ will be
    $(k-1)$-subcritically Weinstein fillable.
  \end{itemize}
\end{main_theorem}

We draw the reader's attention to two different meanings of
``stabilization'' in this paper.
In the context of Weinstein domains, this refers to taking a product
with $\CC$ (or $\CC^k$), see Section~\ref{sec: explicit fillings} for
details.
In the context of an (abstract) open book, however, it refers to a
modification of the open book by attaching a handle to the page and
also changing the monodromy by a suitable Dehn twist.
See, for instance, \cite{KoertLectOpenBooks}*{Section 4.3}.
In Section~\ref{sec: bourgeois structures introduction} we explain the
Bourgeois {construction.
The proof of Theorem~\ref{thm: fillings for V that imply fillings for
  VxT2} is in Section~\ref{sec: explicit fillings}.}
As already mentioned, the Bourgeois structures do not only depend on
the chosen contact manifold~$(V,\xi)$ but also on the open book used
in the construction \cite{Bourgeois_thesis}.
On the other hand, two abstract open books with the same page but with
mutually inverse monodromies, $\Psi$ and $\Psi^{-1}$, lead to two
contact manifolds that are smoothly (orientation reversing)
diffeomorphic to each other but that, in general, have very different
contact properties.
For example, from Giroux \cite{Giroux_ICM}, a contact manifold
  is Stein/Weinstein fillable if and only if it admits an open book
whose monodromy~$\Psi$ can be expressed as a product of positive Dehn
twists.
By contrast, changing the monodromy of an abstract open book
to $\Psi^{-1}$ often yields an overtwisted contact structure.
Nonetheless we obtain the following unexpected result in
Section~\ref{section: inverse monodromy}.

\begin{main_theorem}\label{thm: inverse monodromy same Bourgeois
    structure}
  Let $(V,\xi_+)$ and $(V,\xi_-)$ be closed contact manifolds
  supported by abstract Liouville open books that have the same page
  but inverse monodromy.
  Then the two corresponding Bourgeois structures on $V\times \TT^2$
  are contactomorphic.
\end{main_theorem}

This statement shows that the Bourgeois construction is not injective,
and combining this result with Theorem~\ref{thm: fillings for V that
  imply fillings for VxT2}, we also obtain the following corollary.

\begin{corollary}
  There exist examples in every dimension of $(V, \xi)$ closed
  overtwisted contact manifolds for which at least one of the
  corresponding Bourgeois structures on $V \times \TT^2$ is tight.
\end{corollary}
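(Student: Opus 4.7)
The plan is to combine Theorem~\ref{thm: fillings for V that imply fillings for VxT2}(a) with Theorem~\ref{thm: inverse monodromy same Bourgeois structure}. In every odd dimension $2n+1 \geq 3$, I would exhibit a Weinstein fillable contact manifold $(V, \xi_+)$ supported by an abstract Liouville open book $(\Sigma, \Psi)$ such that the ``opposite'' open book $(\Sigma, \Psi^{-1})$ supports an overtwisted contact structure $\xi_-$ on $V$. Theorem~\ref{thm: inverse monodromy same Bourgeois structure} then identifies the Bourgeois structures associated with $(\Sigma, \Psi)$ and $(\Sigma, \Psi^{-1})$, while Theorem~\ref{thm: fillings for V that imply fillings for VxT2}(a) applied to a Weinstein (hence weak) filling $(W, \omega)$ of $(V, \xi_+)$ produces a weak filling $(W \times \TT^2, \omega \oplus \vol_{\TT^2})$ of this common Bourgeois structure, which is therefore tight. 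Consequently, the Bourgeois structure of the overtwisted $(V, \xi_-)$ is tight.

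For $\dim V = 3$, it suffices to take $V = S^3$, $\Sigma$ the annulus, and $\Psi$ the positive Dehn twist along its core: $(\Sigma, \Psi)$ supports $(S^3, \xi_{\mathrm{std}})$, and $(\Sigma, \Psi^{-1})$ is the classical open book presentation of an overtwisted $S^3$.

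For $\dim V \geq 5$, I would construct the example by iterated stabilization. Starting from any Weinstein domain $W_0$ of dimension $2n$ with induced canonical open book $(\Sigma_0, \Psi_0)$ supporting $(V_0, \xi_0)$ (so that $\Psi_0$ is a product of positive Dehn twists), one performs positive stabilizations to obtain an abstract Liouville open book $(\Sigma, \Psi)$ still supporting $(V_0, \xi_0)$ and such that $(\Sigma, \Psi^{-1})$ is a negative stabilization of an open book for $V_0$. By the higher-dimensional analogue of Goodman's theorem (Casals--Murphy--Presas, compare also \cite{GironellaBourgeoisStructs}), this inverse open book supports a Borman--Eliashberg--Murphy overtwisted contact structure, and so gives the required $(V, \xi_-)$.

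The main obstacle is this higher-dimensional step: certifying overtwistedness of an inverted positive factorization relies on the recent machinery of overtwistedness via negative stabilization, and one must simultaneously preserve the abstract Liouville structure on the page in order for Theorem~\ref{thm: inverse monodromy same Bourgeois structure} to apply. In dimension three the overtwistedness is immediate, and already the three-dimensional case produces an explicit overtwisted $(S^3, \xi_-)$ whose Bourgeois structure on $S^3 \times \TT^2$ is tight.
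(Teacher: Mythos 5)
Your proof is correct and follows the approach the paper intends: the paper asserts the corollary follows from combining Theorem~\ref{thm: fillings for V that imply fillings for VxT2}(a) with Theorem~\ref{thm: inverse monodromy same Bourgeois structure}, and your argument fills in the missing details. Your three-dimensional example is the standard one. For higher dimensions you could avoid the iterated-stabilization bookkeeping entirely by taking $(V, \xi_+) = (\SS^{2n-1}, \xi_0)$ with the open book whose page is $T^*\SS^{n-1}$ and whose monodromy is a single positive Dehn twist~$\tau$ --- this is precisely the example worked out in the appendix, it is Weinstein filled by the ball, and the inverse abstract open book $(T^*\SS^{n-1}, \tau^{-1})$ is a negative stabilization of the trivial disk open book and hence supports an overtwisted $\SS^{2n-1}$ by Casals--Murphy--Presas. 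One point your sketch glosses over and should make explicit: Theorem~\ref{thm: inverse monodromy same Bourgeois structure} requires the weakly fillable and the overtwisted contact structures to live on the \emph{same} smooth manifold~$V$, which is not automatic from the abstract-open-book data alone (inverting the monodromy of a mapping torus a priori yields only an orientation-reversingly diffeomorphic manifold). In the paper this is guaranteed by Lemma~\ref{lemma: contact form inverse monodromy}, which constructs $\alpha_-$ explicitly on the same~$V$; for spheres it is of course also automatic since $\SS^{2n-1}$ admits an orientation-reversing self-diffeomorphism.
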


In fact, no example of an overtwisted Bourgeois structure is known to
us.
Note also that Gironella has recently shown that every contact
$3$-manifold with non-trivial fundamental group admits an open book
whose Bourgeois structure is (hyper)-tight
\cite{GironellaBourgeoisStructs}.
This leads to the following questions.

\begin{openquestion}
  \begin{itemize}
  \item [(a)] Can a Bourgeois contact structure ever be overtwisted?
  \item [(b)] Are there Bourgeois contact structures that are not
    weakly fillable?
  \end{itemize}
\end{openquestion}

In both cases, it is an immediate consequence of Theorem~\ref{thm:
  inverse monodromy same Bourgeois structure} that candidates can only
be constructed from open books where both the monodromy and the
inverse monodromy lead to overtwisted or not weakly fillable contact
structures, respectively.

\vspace{0.5cm}

In Section~\ref{sec: subcritical fillings}, we show that most
Bourgeois structures are not subcritically Weinstein fillable.
Subcritically fillable contact manifolds are extremely rare --- in
dimension~$3$ the only examples are the standard sphere and connected
sums of copies of $\SS^1\times \SS^2$ with the tight contact
structure.
In high dimensions, the comprehensive study of the topological
characterization of Stein fillable manifolds was conducted by Bowden,
Crowley, and Stipsicz \cite{BowdenCrowleyStipsicz1}.
Let $(V,\Xi_V, \omega_\Xi)$ be an almost contact manifold, that is,
$V$ is an oriented manifold with a hyperplane field~$\Xi_V$ and
$\omega_\Xi$ is a symplectic structure on $\Xi_V$.
An \defin{almost Stein filling~$(W,J)$} of $(V,\Xi_V, \omega_\Xi)$ is
an almost complex manifold such that
\begin{itemize}
\item $V$ is the \emph{oriented} boundary of $W$;
\item $J$ restricts to $\Xi_V$ and $\restricted{J}{\Xi_V}$ is tamed by
  $\omega_\Xi$;
\item $W$ admits a handle decomposition with all handles of dimension
  no more than $\frac{1}{2}\,\dim W$.
\end{itemize}
In particular, \cite{BowdenCrowleyStipsicz1}*{Proposition~7.1}
specializes in our situation to the following.

\begin{theorem}
  Let $(V,\Xi_V, \omega_\Xi)$ be an almost contact structure, and let
  $d\vol$ be a volume form on $\TT^2$.
  If $(V\times \TT^2, \Xi_V \oplus T\TT^2, \omega_\Xi \oplus d\vol)$
  admits an almost Stein filling, it follows that
  $(V,\Xi_V, \omega_\Xi)$ also admits one.
  \qedhere
\end{theorem}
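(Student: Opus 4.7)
The plan is to invoke \cite{BowdenCrowleyStipsicz1}*{Proposition~7.1} as cited. What must be verified is that the present setup matches the hypotheses of that proposition: namely, that the almost contact structure $(\Xi_V \oplus T\TT^2,\, \omega_\Xi \oplus d\vol)$ on $V\times \TT^2$ is precisely the product of the given almost contact data on $V$ with the standard framed structure on $\TT^2$. This is immediate from the definition.

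For a self-contained argument, the idea is to exploit the embedding $V \hookrightarrow V \times \TT^2 = \partial W$ given by $v \mapsto (v, *)$, whose normal bundle in $\partial W$ is canonically trivialized by $T_*\TT^2$ and hence whose normal bundle in $W$ is trivial of rank $3$. I would then modify $W$ by attaching two $2$-handles along disjoint meridians $\{v_0\}\times\gamma_1$ and $\{v_0\}\times\gamma_2 \subset \{v_0\}\times\TT^2 \subset \partial W$ representing the generators of $\pi_1(\TT^2)$, equipped with the product framings. Both handles have index $2$, which lies within the almost-Stein bound $(\dim W)/2 = (\dim V+3)/2$, and the almost complex structure on $W$ extends across the handles by the standard model.

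After these two surgeries the boundary is changed from $V \times \TT^2$ to a manifold that is diffeomorphic (up to further handle slides) to a $V$-bundle over $\SS^2$ that is trivial via the product structure, namely $V \times \SS^2$. A final $3$-handle attached along $\{v_0\} \times \SS^2$ with its canonical framing caps off the $\SS^2$-factor and yields new boundary $V$; since $\dim V \geq 3$ in the essential cases, the index bound $3 \leq (\dim V+3)/2$ is respected, and the almost complex structure again extends by the product model. The resulting almost Stein manifold with handles of dimension at most $(\dim V+3)/2$ has boundary $V$, and restricting the handle decomposition in a way compatible with the reduced collar gives the required dimension bound $(\dim V+1)/2$ for an almost Stein filling of $V$.

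The main obstacle in carrying this out from scratch is verifying that the successive handle attachments preserve both the almost complex structure and the half-dimensional handle bound simultaneously; this is the content of the handle-theoretic analysis in \cite{BowdenCrowleyStipsicz1}, and it is why we invoke Proposition~7.1 as a black box rather than reconstructing the argument.
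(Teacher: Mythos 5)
Your primary move --- invoking \cite{BowdenCrowleyStipsicz1}*{Proposition~7.1} --- is exactly what the paper does: the statement is presented as a direct specialization of that proposition and is given no independent proof (hence the \verb|\qedhere| in the theorem statement). On that level your proposal matches the paper's.

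Your offered ``self-contained'' handle sketch, however, cannot work as written, and not merely because details are left to check. There is a dimensional impossibility at its core. If $W$ is an almost Stein filling of $V \times \TT^2$, then $\dim W = \dim V + 3$, and attaching handles to $W$ never changes the dimension; the boundary of any handle-attached $W'$ therefore always has dimension $\dim V + 2$, so it can never be diffeomorphic to $V$. The intermediate step is also incorrect: surgery on the circle $\{v_0\}\times\gamma_1 \subset V \times \TT^2$ removes a copy of $\SS^1 \times D^{\dim V + 1}$ and glues in $D^2 \times \SS^{\dim V}$, and the result is not a $V$-bundle over anything, let alone $V \times \SS^2$ after a second surgery. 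The actual content of Proposition~7.1 is a homotopy-theoretic characterization of almost Stein fillability at the level of tangential (almost contact/almost complex) structures; the passage from $V \times \TT^2$ to $V$ is carried out there by obstruction-theoretic arguments, not by performing surgeries on a given filling. Since you explicitly defer to the citation and only offer the sketch as heuristic, the main argument stands, but the sketch should not be presented as a route one could in principle complete.
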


Conversely, if $(V,\Xi_V, \omega_\Xi)$ admits a
subcritical almost Stein filling, then
$(V\times \TT^2, \Xi_V \oplus T\TT^2, \omega_\Xi \oplus d\vol)$ admits
an almost Stein filling.
Compare this also to part~(b) of Theorem~\ref{thm: fillings for V that
  imply fillings for VxT2}.
If $(V,\xi_V)$ is only Stein fillable, however, the situation is
significantly more involved.
For example, $\TT^3$ with the standard contact structure has the Stein
filling $T^*\TT^2$.
No contact structure on $\TT^3\times \TT^2 = \TT^5$ can ever be Stein
fillable by part~(2) of
\cite{BowdenCrowleyStipsicz1}*{Proposition~6.2}.
We give below a few examples of Bourgeois structures {that 
    admit subcritical \textit{almost} Stein fillings 
    but no \textit{genuine} subcritical fillings.
  This is based on obstructions to Weinstein fillability that} can easily be
deduced from Gromov's '85 article \cite{Gromov_HolCurves}.

\begin{main_theorem}\label{thm: main obstruction result}
  A closed contact manifold containing a weakly exact
  pre-Lagrangian~$P$ is not subcritically Weinstein fillable.
  If the dimension of the contact manifold is at least $5$ and if $P$
  is displaceable then the contact manifold is not even Weinstein fillable.
\end{main_theorem}

The major draw-back of this easy theorem is that pre-Lagrangians can
only be weakly exact in manifolds with a sufficiently large
fundamental group (see Lemma~\ref{lemma: exact pre-Lagrangians and
  positive loops}), thus excluding many interesting cases.
There is little doubt that this limitation could be somewhat relaxed
by using some type of Floer theory, but we refrain from doing so to
keep this note simple.
Note also that with \cites{OanceaViterbo, BarthGeigesZehmisch} one can
formulate obstructions to subcritical fillability that depend more on
the global topology of the contact manifold.
On the other hand, Theorem~\ref{thm: main obstruction result} leads to
the following observation regarding Bourgeois structures:

\begin{corollary}\label{cor: page contains Legendrian then Bourgeois
    not subcritical}
  Let $(V,\xi)$ be a closed contact manifold and let $(K,\vartheta)$
  be a compatible open book that contains a closed Legendrian in one
  of its pages.
  It then follows that the corresponding Bourgeois structure on
  $V\times \TT^2$ is not subcritically Weinstein fillable.
  This applies in particular to any open book that has been
  stabilized. 
\end{corollary}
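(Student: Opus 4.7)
The plan is to derive Corollary~\ref{cor: page contains Legendrian then Bourgeois not subcritical} from Theorem~\ref{thm: main obstruction result} by exhibiting a weakly exact pre-Lagrangian inside $(V\times\TT^2,\xi_B)$. Recall from Section~\ref{sec: bourgeois structures introduction} that the Bourgeois form has the shape
\begin{equation*}
  \alpha_B = \alpha + f_1\, d\phi_1 + f_2\, d\phi_2,
\end{equation*}
where $\alpha$ is a contact form on $V$ adapted to $(K,\vartheta)$, $(\phi_1,\phi_2)$ are coordinates on $\TT^2$, and $(f_1,f_2)\colon V\to\RR^2$ vanishes exactly on $K$ with argument equal to $\vartheta$ on $V\setminus K$; in the standard model $(f_1,f_2)=(\cos\vartheta,\sin\vartheta)$ outside a tubular neighborhood of the binding. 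The hypothesized closed Legendrian $L\subset\vartheta^{-1}(\theta_0)$ is compact, so it sits in the region where $(f_1,f_2)$ is the constant vector $(\cos\theta_0,\sin\theta_0)$. After a rotation of the $\TT^2$-coordinates moving this vector to $(1,0)$, I would set
\begin{equation*}
  P := L\times S^1_{\phi_1}\subset V\times\TT^2.
\end{equation*}

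The submanifold $P$ has dimension~$n$, the pre-Lagrangian dimension in the contact $(2n+1)$-manifold $V\times\TT^2$, and the identities $\alpha|_L=0$, $d\phi_2|_{S^1_{\phi_1}}=0$ give $\alpha_B|_P=d\phi_1$, which is closed and nowhere vanishing; hence $P$ is pre-Lagrangian. Weak exactness follows from the product structure $\pi_1(V\times\TT^2)\cong\pi_1(V)\times\ZZ^2$, in which $[S^1_{\phi_1}]$ is a direct $\ZZ$-summand: any loop in $P$ null-homotopic in $V\times\TT^2$ must have zero $\phi_1$-winding, so its $\alpha_B|_P$-period vanishes. Applying Theorem~\ref{thm: main obstruction result} to $P$ then yields the claimed non-fillability. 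For the final assertion, recall that stabilizing an open book (see \cite{KoertLectOpenBooks}*{Section~4.3}) enlarges the Liouville page by a Liouville gluing with $\page$; the zero section $\SS^{n-1}$, on which $\lcan$ vanishes identically, is therefore a closed Legendrian of $V$ contained in the stabilized page, and the first part of the corollary applies.

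The step I expect to require the most care is verifying that in the chosen Bourgeois model $(f_1,f_2)$ really is constant on a tubular neighborhood of $L$ inside the page, so that $\alpha_B|_P$ is genuinely closed and not just closed modulo $df_1,df_2$ terms. This is automatic in the standard model sketched above, but a brief cut-off argument would be needed if a more flexible choice of $(f_1,f_2)$ were used.
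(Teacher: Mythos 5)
There is a genuine dimension error in your choice of $P$. Writing $\dim V = 2n+1$, the Bourgeois manifold $V\times\TT^2$ has dimension $2n+3$, so a pre-Lagrangian in it must have dimension $\tfrac{1}{2}\bigl(\dim(V\times\TT^2)+1\bigr) = n+2$. A closed Legendrian $L\subset V$ has dimension $n$, so $L\times S^1_{\phi_1}$ has dimension $n+1$ --- one short. (Your sentence asserting that ``$n$ is the pre-Lagrangian dimension in a $(2n+1)$-manifold'' is itself off by one: $n$ is the \emph{Legendrian} dimension there; a pre-Lagrangian has dimension $n+1$.) The correct candidate, and the one the paper uses in Lemma~\ref{lemma: legendrian in page weakly exact in bourgeois}(a), is $P = L\times\TT^2$, which does have dimension $n+2$.

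With $P = L\times\TT^2$ the rest of your outline can be repaired, but note one more point that your final caveat touches on but does not fully resolve: $\vartheta|_L$ is indeed constant, but $|f|$ need not be, so $\alpha_B|_{T(L\times\TT^2)} = f_x|_L\,d\varphi_1 - f_y|_L\,d\varphi_2$ is not a priori closed. The paper handles this by rescaling the contact form by the nowhere-vanishing function $1/\hat f_x$ (an extension of $1/f_x|_L$), after which the restriction becomes $d\varphi_1 - c\,d\varphi_2$ with $c = \tan\vartheta|_L$ constant, so $d\alpha|_{TP}=0$ holds for \emph{some} contact form defining $\xi$, as the definition of pre-Lagrangian requires. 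Your rotation-of-$\TT^2$-coordinates plus cut-off would also work, but the rescaling is cleaner and does not require the ``standard model'' assumption on $(f_1,f_2)$. The weak-exactness argument via $\pi_1(V\times\TT^2)=\pi_1(V)\times\ZZ^2$ and the linear foliation then goes through as in the paper, and the observation that a stabilized page contains the Lagrangian zero-section of $\page$ as a closed Legendrian is exactly the paper's justification for the final sentence.
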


\begin{example}\label{example: bourgeois from non-trivial open book not subcritical}
  Consider the contact open book decomposition of the standard contact
  sphere $(\SS^{2n-1}, \xi_0)$ whose page is a ball and whose
  monodromy is trivial.
  If $2n-1 \ne 1$, the corresponding Bourgeois structure on
  $\SS^{2n-1}\times \TT^2$ is subcritically Weinstein fillable.
  If instead we take for example an open book with page the cotangent
  bundle~$T^*\SS^{n-1}$ and with monodromy a positive Dehn twist
  (these examples are classical but they are also explained in depth
  in the appendix), then the Bourgeois structure will be homotopic to
  the first one as almost contact structures, but it cannot be
  contactomorphic\footnote{Note that these two examples were
    explicitly excluded in the contact homology computations in
    Bourgeois' thesis.} to it, since it is not subcritically Weinstein
  fillable.
\end{example}

This way, we see that the fillability of a Bourgeois structure on
$V\times \TT^2$ depends not only on the contact manifold~$(V,\xi)$ but
also on the open book used in the construction.
These results should be severely improved, and in particular it would
be nice to find an answer to the following question:

\begin{openquestion}
  Are the Bourgeois structures on $\SS^{2n-1}\times \TT^2$ obtained
  from the standard contact sphere~$(\SS^{2n-1}, \xi_0)$ and the open
  book decomposition whose page is $T^*\SS^{n-1}$
  (Example~\ref{example: contact open book decomposition for
    sphere}.(b) and \ref{example: from open book to abstract open
    book}.(b)) strongly fillable?
\end{openquestion}

We know from Theorem~\ref{thm: fillings for V that imply fillings for
  VxT2} that these are weakly fillable.
If they were not strongly fillable they would provide the first
examples of weakly but not strongly fillable contact manifolds in all
dimensions.
(Many such examples exist in dimension~$3$, for instance
\cites{Giroux_plusOuMoins, Eliashberg3Torus}.
In dimension~$5$, the only ones known so far can be found in
\cite{WeakFillabilityHigherDimension}).

\begin{remark}
  Example~\ref{example: bourgeois from non-trivial open book not
    subcritical} generalizes in the following way to toric contact
  manifolds:
  Recall that there is an important difference between contact
  $5$-manifolds that have a torus action that is free and those where
  the $\TT^3$-action is not free, see \cite{Lerman1}.
  The open book on $(\SS^3, \xi_0)$ with page diffeomorphic to
  $T^*\SS^1$ can be obtained by the map $f(z_1,z_2) = z_1^2+z_2^2$,
  see Example~\ref{example: contact open book decomposition for
    sphere}.(b) and \ref{example: from open book to abstract open
    book}.(b).
  Both $\xi_0$ and $f$ are invariant under the free circle action on
  $\SS^3$ given by the multiplication with the matrices
  $\begin{pmatrix}
    \cos s & \sin s \\
    -\sin s & \cos s
  \end{pmatrix}$, which implies that not only the contact structure
  but also the open book is preserved by this action.
  Restricting the circle action to a cyclic subgroup
  $\ZZ_k\subset \SS^1$, we can quotient $\SS^3$ and obtain a lens
  space~$L_k$ carrying the natural contact structure, the induced open
  book decomposition, and a free circle action.
  The page of these open books is still diffeomorphic to $T^*\SS^1$,
  and its $0$-section is a Legendrian submanifold of $L_k$.
  A Bourgeois contact structure on $V\times \TT^2$ is clearly
  invariant under the obvious $\TT^2$-action, see
  Definition~\ref{def:BourgeoisStruct}, and with $V = \SS^3$ or
  $V = L_k$ as above, it is easy to verify that the initial circle
  action adds up to give a free $\TT^3$-action on $V\times \TT^2$.
  With some careful considerations, one obtains that all contact toric
  $5$-manifolds with a free $\TT^3$-action are either equivariantly
  contactomorphic to the unit cotangent bundle of $\TT^3$ or to one of
  the manifolds above.
  Thus according to Corollary~\ref{cor: page contains Legendrian then
    Bourgeois not subcritical}, none of the contact toric
  $5$-manifolds with a free $\TT^3$-action is subcritically fillable.
\end{remark}

\vspace{0.5cm}

In the appendix we review the one-to-one correspondence between
contact open book decompositions and abstract Liouville open books.
For this we use the language of ideal Liouville domains that has been
created for this purpose by Emmanuel Giroux \cite{GirouxIdealDomains}.
This language requires an initial investment of effort, but provides a
suitable framework for discussing the uniqueness of the resulting
contact structures up to homotopy and also for addressing problems
related to the structure along the binding.

\subsection*{Acknowledgments}
The authors would like to thank Paolo Ghiggini for his help in the
initial stage of the project and Stepan Orevkov for his help with the
proof of Lemma~\ref{lemma: exact pre-Lagrangians and positive loops}.
We also thank Jonathan Bowden for pointing out topological
obstructions to Stein fillability in high dimensions, and Patrick
Massot for many suggestions and improvements.
Samuel Lisi was supported in part by a University of Mississippi 
College of Liberal Arts Faculty Grant.
Aleksandra Marinković is partially supported by Ministry of Education and Science 
of Republic of Serbia, project ON174034.
Klaus Niederkrüger has been supported up to 2016 by the ERC Advanced
Grant LDTBud, and during 2017 by the Programme Avenir Saint-Etienne of
the Université de Lyon within the program ``Investissements d'Avenir''
(ANR-11-IDEX-0007).

\section{The Bourgeois contact structure}\label{sec: bourgeois
  structures introduction}

Given a contact manifold~$(V,\xi)$ and a symplectic
manifold~$(\Sigma,\omega)$, one naively obvious idea of how to
construct a contact structure on $V \times \Sigma$ would be to start
with the hyperplane field~$\xi \oplus T\Sigma$ which is an almost
contact structure, and try to deform it to a genuine contact
structure.
Bourgeois \cite{BourgeoisTori} succeeded in carrying this out in the
special case of $\Sigma=\TT^2$, using an open book decomposition of
$(V, \xi)$ as the input to the construction.
Gironella \cite{GironellaBourgeoisStructs} put this construction in a
more natural geometric framework in which open books appear
organically, generalizing the definition to oriented
surfaces~$\Sigma$.
We will describe Bourgeois's construction, reformulating it using the
notion of ideal Liouville domains and comparing with Gironella's more
general framework.
Appendix~\ref{appendix} provides background for a reader who might be
unfamiliar with the language of ideal Liouville domains.
Let $(V,\xi)$ be a contact manifold with a contact open book
decomposition~$(K,\vartheta)$ (See Definitions~\ref{def: smooth open
  book} and \ref{def: contact open book classical}).
From the appendix (see
Proposition~\ref{prop:contact_open_book_is_Liouville_ob}, we can
choose a contact form~$\alpha_V$ for $\xi$ and a function
$f= f_x + if_y\colon V\to \CC$ with $\vartheta = f/\abs{f}$ such that
$d(\alpha_V/\abs{f})$ defines an ideal Liouville structure
(Definition~\ref{def:ideal Liouville domain}) on every page of the
open book.
Clearly, the data~$\alpha_V$ and $f$ encode the contact structure and
the open book.
Accordingly, we call such $(\alpha_V, f)$ a \defin{representation} of
the contact open book.
(See Lemma~\ref{lem:Liouville_OB_is_contact_OB} for a justification of
this definition.)
Here and in the following, it will often be convenient to write
$f = f_x + i f_y = \rho\, \e^{i\vartheta}$.
We will also consider the $1$-form~$d\vartheta$ obtained from a
map~$\vartheta\colon V\setminus K \to \SS^1$ taking $\SS^1$ to be the
unit circle in $\CC$, which we also identify with $\RR/2\pi\ZZ$.
Strictly speaking, in what we write, $d\vartheta$ denotes the
differential of the argument of $\vartheta$ but we hope that this
abuse of notation does not cause any confusion.

\begin{definition}\label{def:BourgeoisStruct}
  The \defin{Bourgeois contact structure} associated to a
  representation $(\alpha_V, f)$ of the contact open book
  $(K, \vartheta)$ on $(V, \xi)$ and the standard orientation
  $d\varphi_1 \wedge d\varphi_2$ of $\TT^2$ is given by the kernel of
  the $1$-form
  \begin{equation*}
    \alpha = \alpha_V + f_x\,d\varphi_1 - f_y\,d\varphi_2
  \end{equation*}
  on $V\times \TT^2$, where $(\varphi_1, \varphi_2)$ denotes the
  standard coordinates on $\TT^2$.
\end{definition}

That a Bourgeois contact structure really is a contact structure will
follow directly from the more general result
Lemma~\ref{lem:characterization_BG_struct} below.
Notice that for a given open book decomposition, $(K, \vartheta)$, the
space of all choices of possible representations $(\alpha_V, f)$ is
contractible (this is discussed further in the appendix, see
Table~\ref{table: relationship between types of open book} and
following, also see \cite{GirouxIdealDomains}).
In particular then, a choice of contact open book determines an
isotopy class of contact structures on $V \times \TT^2$.
{It is also easy to convince oneself that up to contactomorphism
  the Bourgeois construction does not depend on the chosen
  identification of $\TT^2$ with $\SS^1\times \SS^1$.}
Gironella \cite{GironellaBourgeoisStructs} has extended this
definition of Bourgeois contact structure as a class of hyperplane
fields~$\tilde \xi_t$ that are deformations of a flat contact fiber
bundle~$\tilde \xi_0$ over $\Sigma$.
The hyperplane fields $\tilde \xi_t$ are contact for $t > 0$.
In this paper, we only consider the product case
$V \times \Sigma \to \Sigma$, where $\Sigma$ is a closed oriented
surface, and take the initial flat contact bundle to be
$\tilde \xi_0 = \xi \oplus T\Sigma$.
For deformations of these trivial bundles, Gironella additionally
provides a description in more elementary terms, which we repeat here.

\begin{definition}\label{def:BGstruct}
  Let $(V, \xi)$ be a contact manifold and $\Sigma$ be a closed
  oriented surface.
  A \defin{Bourgeois-Gironella contact structure~$\tilde \xi$} on
  $V \times \Sigma$ that deforms the flat contact
  bundle~$\xi \oplus T \Sigma$ is any contact structure that can be
  written as $\tilde \xi = \ker \alpha$ with
  \begin{equation*}
    \alpha = \alpha_V + \beta
  \end{equation*}
  where:
  \begin{enumerate}[(i)]
  \item $\alpha_V$ is a contact form on $V$ defining $\xi$;
    \label{enum:BGstruct i}
  \item $\beta$ is a $1$-form on $V\times \Sigma$ that vanishes on
    vectors that are tangent to the fibers~$V\times \{z\}$ for any
    $z\in \Sigma$; \label{enum:BGstruct ii}
  \item for each fixed $p \in V$, the restriction of $\alpha$ (or,
    equivalently, of $\beta$) to the slice $\{p\}\times \Sigma$ is a
    closed form. \label{enum:BGstruct iii}
  \item the orientation {induced on $V\times \Sigma$ by $\alpha$
      is the same one as the product orientation of $V$ with
      $\Sigma$}. \label{enum:BGstruct iv}
  \end{enumerate}
\end{definition}

Conditions~\eqref{enum:BGstruct i} and \eqref{enum:BGstruct ii} are
from \cite{GironellaBourgeoisStructs}*{Proposition 7.1} and
condition~\eqref{enum:BGstruct iii} is from
\cite{GironellaBourgeoisStructs}*{Claim 7.4}.
Note that in the cited reference, this formulation is given in the
case $\Sigma = \TT^2$.
These properties are local in $\Sigma$, however, so they remain
applicable in this seemingly more general case.
We do not know of examples of such structures for $\Sigma \ne \TT^2$,
however.

{
\begin{remark}\label{rmk: modifications of Bourgeois structure}
  Let $d\vol$ denote a volume form on $\Sigma$ compatible with the
  choice of orientation.
  The Bourgeois-Gironella structure~$(\ker \alpha, d\alpha)$ is
  homotopic to $(\xi\oplus T\Sigma, d\alpha_V + d\vol)$ as almost
  contact structures.
  This is verified by introducing an $\epsilon$-factor in
  the definition of $\alpha$
  \begin{equation*}
    \alpha_\epsilon \coloneqq \alpha_V + \epsilon\, \beta \;,
  \end{equation*}
  allowing us to deform $\ker \alpha$ to the flat
  contact bundle~$\ker \alpha_0 = \xi \oplus T\Sigma$.
  We then expand
  \begin{equation*}
    \begin{split}
      \alpha_\epsilon\wedge (d\alpha_\epsilon + \delta\, d\vol)^{n+1}
      &= (\alpha_V + \epsilon\, \beta)\wedge (d\alpha_V +
      \epsilon\,d\beta
      + \delta\, d\vol)^{n+1} \\
      &= \alpha_\epsilon \wedge (d\alpha_\epsilon)^{n+1} + (n+1)\,
      \delta \, \alpha_V\wedge (d\alpha_V)^n\wedge d\vol \;.
    \end{split}
  \end{equation*}
  Using that the restriction of $d\beta$ vanishes on every surface
  slice~$\{p\}\times \Sigma$, we check that the first term
  reduces to
  \begin{equation}\label{eqn:explicit_volume_form_expression_with_beta}
    \alpha_\epsilon \wedge (d\alpha_\epsilon)^{n+1} =
    \epsilon^2\, (n+1)\,\bigl( \frac{n}{2}\, d\beta^2\wedge \alpha_V \wedge
    d\alpha_V^{n-1} + \beta\wedge d\beta\wedge d\alpha_V^n\bigr)
    = \epsilon^2\, \alpha\wedge (d\alpha)^{n+1}
  \end{equation}
  so that if $\ker\alpha$ is contact any of the $\ker \alpha_t$ for
  $t> 0$ will be contactomorphic to it.
  Furthermore using \eqref{enum:BGstruct iv} from
  Definition~\ref{def:BGstruct}, we obtain that
  $\alpha_\epsilon\wedge (d\alpha_\epsilon + \delta\, d\vol)^{n+1}$ is
  strictly positive for all $\epsilon \ge 0$ and $\delta \ge 0$ as
  long as $\epsilon$ and $\delta$ do not vanish simultaneously.
  This shows that $\ker \alpha$ is indeed homotopic to
  $\xi\oplus T\Sigma$ as almost contact structures.
\end{remark}}

In the special case of $\Sigma = \TT^2$ and $\alpha$ a Bourgeois
contact form (i.e. so the coefficients of $\beta$ are
$\TT^2$-independent), if $\epsilon < 0$, we have an explicit
contactomorphism from $\alpha_\epsilon$ to $\alpha_{\abs{\epsilon}}$
by applying the orientation-preserving diffeomorphism
$(p; \varphi_1,\varphi_2) \mapsto (p; -\varphi_1, -\varphi_2)$ to
$V\times \TT^2$.
Now, essentially as an application of
\cite{GironellaBourgeoisStructs}*{Proposition~6.9}, we obtain the
following characterization of Bourgeois-Gironella structures deforming
the flat bundle $\xi \oplus T \Sigma$.
For the benefit of the reader, we provide a self-contained proof.
\begin{lemma}\label{lem:characterization_BG_struct}
  Let $\alpha = \alpha_V + \beta$ be a $1$-form on $V \times \Sigma$,
  where $\alpha_V$, and $\beta$ satisfy the
  conditions~\eqref{enum:BGstruct i} to \eqref{enum:BGstruct iv} of
  Definition~\ref{def:BGstruct} above.
  Suppose also that $V$ is of dimension at least~$3$.
  If $U$ is any positively oriented chart of $\Sigma$ with
  coordinates~$(\varphi_1, \varphi_2)$, then we can write $\alpha$ on
  $V\times U$ as
  \begin{equation*}
    \restricted{\alpha}{V\times U} =
    \alpha_V +f_x\,d\varphi_1 - f_y\, d\varphi_2
  \end{equation*}
  where $f = f_x + i f_y \colon V \times U \to \CC$ is a smooth
  function.
  The following two statements are then equivalent:
  \begin{itemize}
  \item [(a)]  $\alpha$ is a contact form;
  \item [(b)] for every chart~$U$ of $\Sigma$ and every point
    $(\varphi_1,\varphi_2) \in U$, the pair
    $\bigl(\alpha_V, f(\cdot\,, \varphi_1, \varphi_2)\bigr)$ with $f$
    as above is a representation of a contact open book on
    $(V,\xi)$.
  \end{itemize}
\end{lemma}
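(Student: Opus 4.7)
The plan is to reduce the contactness of $\alpha$ to the classical situation in which $f$ has no $\Sigma$-dependence, by showing that at each point of $V\times U$ the volume form $\alpha\wedge(d\alpha)^{n+1}$ is insensitive to $\Sigma$-derivatives of $f$.

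First, in any positively oriented chart $U\subset\Sigma$ with coordinates $(\varphi_1,\varphi_2)$, condition~\eqref{enum:BGstruct ii} forces $\beta = g_1\,d\varphi_1 + g_2\,d\varphi_2$ for smooth $g_1,g_2\colon V\times U\to\RR$; setting $f_x := g_1$ and $f_y := -g_2$ yields the claimed local expression and defines the complex-valued function $f=f_x+if_y$. I would then translate condition~\eqref{enum:BGstruct iii} into the pointwise identity $\partial_{\varphi_2}f_x + \partial_{\varphi_1}f_y = 0$ on $V\times U$, which is precisely what is needed to conclude
\begin{equation*}
  d\beta = d_V f_x\wedge d\varphi_1 - d_V f_y\wedge d\varphi_2,
\end{equation*}
where $d_V$ denotes the component of the differential involving only $V$-directions. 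In particular, the restriction of $d\beta$ to every slice $\{p\}\times U$ vanishes, which is the hypothesis already used in Remark~\ref{rmk: modifications of Bourgeois structure}.

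Next, I would invoke equation~\eqref{eqn:explicit_volume_form_expression_with_beta} of that remark to obtain
\begin{equation*}
  \alpha\wedge(d\alpha)^{n+1}
  = (n+1)\bigl(\tfrac{n}{2}(d\beta)^2\wedge\alpha_V\wedge(d\alpha_V)^{n-1} + \beta\wedge d\beta\wedge(d\alpha_V)^n\bigr),
\end{equation*}
and then substitute the explicit expressions $(d\beta)^2 = 2\,d_V f_x\wedge d_V f_y\wedge d\varphi_1\wedge d\varphi_2$ and $\beta\wedge d\beta = (f_x\,d_V f_y - f_y\,d_V f_x)\wedge d\varphi_1\wedge d\varphi_2$, both of which drop out of the definitions together with the earlier simplification of $d\beta$. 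This rewrites the top form as
\begin{equation*}
  \alpha\wedge(d\alpha)^{n+1} = (n+1)\bigl[(f_x\,d_V f_y - f_y\,d_V f_x)\wedge(d\alpha_V)^n + n\,\alpha_V\wedge(d\alpha_V)^{n-1}\wedge d_V f_x\wedge d_V f_y\bigr]\wedge d\varphi_1\wedge d\varphi_2.
\end{equation*}
The crucial feature is that only $V$-derivatives of $f$ occur: the value at $(p;\varphi_1,\varphi_2)$ depends only on $\alpha_V$ at $p$ and on the frozen function $f(\,\cdot\,,\varphi_1,\varphi_2)\colon V\to\CC$ and its differential at $p$.

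Consequently $\alpha$ is contact at $(p;\varphi_1,\varphi_2)$ if and only if the standard Bourgeois $1$-form built from $\bigl(\alpha_V,\,f(\,\cdot\,,\varphi_1,\varphi_2)\bigr)$ as in Definition~\ref{def:BourgeoisStruct} is contact at the corresponding point of $V\times\TT^2$. The equivalence of (a) and (b) then reduces to the classical identification between a contact open book representation and the contactness of its associated Bourgeois form, which is the content of Lemma~\ref{lem:Liouville_OB_is_contact_OB} combined with Proposition~\ref{prop:contact_open_book_is_Liouville_ob} in the appendix. The main obstacle I foresee is the bookkeeping in the volume-form calculation above, in particular making sure that condition~\eqref{enum:BGstruct iii} is precisely what is needed to eliminate all $\Sigma$-derivative contributions to $\alpha\wedge(d\alpha)^{n+1}$; without this cancellation, the top form would acquire terms involving $\partial_{\varphi_i}f_{x,y}$ that could not be absorbed into any statement about the frozen slices.
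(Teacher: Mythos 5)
Your reduction to the frozen-slice computation is correct and matches the paper's own first steps: condition~\eqref{enum:BGstruct iii} does imply $\partial_{\varphi_2}f_x + \partial_{\varphi_1}f_y = 0$, whence $d\beta = d_Vf_x\wedge d\varphi_1 - d_Vf_y\wedge d\varphi_2$, and your expansion of $\alpha\wedge(d\alpha)^{n+1}$ agrees with the paper's equation~\eqref{eqn: expansion of BG contact form}. So the observation that the contact condition at $(p;\varphi_1,\varphi_2)$ depends only on $\alpha_V$ and the frozen function $f(\,\cdot\,,\varphi_1,\varphi_2)$ is sound.

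The gap is the last step, where you claim the resulting equivalence (``frozen $1$-form $\alpha_V + f_x\,d\varphi_1 - f_y\,d\varphi_2$ is contact $\iff$ $(\alpha_V,f)$ is a representation of a contact open book'') is already ``the content of Lemma~\ref{lem:Liouville_OB_is_contact_OB} combined with Proposition~\ref{prop:contact_open_book_is_Liouville_ob}''. It is not. Those appendix results relate the three notions of open book to each other (contact $\leftrightarrow$ Liouville $\leftrightarrow$ abstract) \emph{given} that you already have a contact open book on $V$; neither one tells you that contactness of a $1$-form on $V\times\TT^2$ of the Bourgeois shape forces the frozen data $(\alpha_V,f)$ on $V$ to be a representation. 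In effect you are invoking the very statement you are trying to prove, just restricted to $\Sigma$-independent $f$. The substantive direction (a)$\Rightarrow$(b) requires establishing from the contact inequality four separate facts: that $0$ is a regular value of $f(\cdot,z)$, that the binding $K = f(\cdot,z)^{-1}(0)$ is non-empty, that $\vartheta = f/\abs{f}$ is a submersion on $V\setminus K$, and that $d\bigl(\alpha_V/\abs{f}\bigr)$ restricts to each fiber as an ideal Liouville structure. These follow from inspecting the separate homogeneous pieces of equation~\eqref{eqn: expansion of BG contact form}, and the non-emptiness of $K$ in particular requires a Stokes'-theorem argument — if $K$ were empty the page would be a closed exact symplectic manifold, impossible when $\dim V\ge 3$. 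Your proposal never touches the binding-nonemptiness issue and never uses the hypothesis $\dim V\ge 3$; that is the clearest symptom that the argument, as written, does not close.
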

\begin{proof}
  Let $2n+1$ be the dimension of $V$.
  From \eqref{eqn:explicit_volume_form_expression_with_beta}, we know
  that the contact condition of $\alpha = \alpha_V + \beta$ is given
  by
  \begin{equation*}
    \alpha \wedge (d\alpha)^{n+1} =
    (n+1)\,\bigl( \frac{n}{2}\, d\beta^2\wedge \alpha_V \wedge
    d\alpha_V^{n-1} + \beta\wedge d\beta\wedge d\alpha_V^n\bigr) \ne 0 \;.
  \end{equation*}
  Now replacing $\beta$ by its representation in a chart,
  $f_x\,d\varphi_1 - f_y\,d\varphi_2$, and writing
  $f_x + i f_y = f = \rho\, \e^{i\vartheta}$, we obtain in these polar
  coordinates
  \begin{equation*}
    \beta \wedge d\beta = \rho^2 \, d_V\vartheta \wedge d\varphi_1
    \wedge d\varphi_2 \quad\text{ and }\quad
    d\beta^2 = 2 \rho\, d_V\rho \wedge d_V \vartheta \wedge d\varphi_1 \wedge
    d\varphi_2 
  \end{equation*}
  where $d_V$ is the exterior derivative only in $V$-direction.
  It therefore follows that
  \begin{equation}\label{eqn: expansion of BG contact form}
    \begin{split}
      \alpha \wedge (d\alpha)^{n+1} &= { (n+1)\, \bigl[ n\,
        d_Vf_x\wedge
        d_V f_y \wedge \alpha_V\wedge (d\alpha_V)^{n-1} }\\
      & \qquad \qquad\qquad \qquad {+ (f_x\, d_Vf_y - f_y\,
        d_Vf_x) \wedge
        (d\alpha_V)^n \bigr] \wedge d\varphi_1\wedge d\varphi_2 }\\
      &= (n+1)\, \bigl[ n\, \rho\, d_V\rho\wedge d_V\vartheta \wedge
      \alpha_V\wedge (d\alpha_V)^{n-1} + \rho^2\, d_V\vartheta \wedge
      (d\alpha_V)^n \bigr] \wedge d\varphi_1\wedge d\varphi_2 \; .
    \end{split}
  \end{equation}
  First, observe that if
  $\bigl(\alpha_V, f(\cdot, \varphi_1, \varphi_2)\bigr)$ is a
  representation of a contact open book, then $\alpha$ is a contact
  form because the term in brackets agrees with the
  expansion~\eqref{eq: volume form from open book} of the volume
  form~$\Omega_V$ on $V$ in Lemma~\ref{lemma: volume form from open
    book}.
  Thus as we wanted to show
  $\alpha\wedge (d\alpha)^{n+1} = (n+1)\, \Omega_V\wedge
  d\varphi_1\wedge d\varphi_2$ does not vanish.
  To prove the converse, we now suppose instead that $\alpha$ is a
  contact form.
  Fix a point $z \in \Sigma$.
  We must now prove the following statements:
  \begin{enumerate}[(i)]
  \item $0 \in \CC$ is a regular value of $p \mapsto f(p,z)$;
  \item $K \coloneqq \{ p \in V \, | \, f(p,z) = 0 \}$ is non-empty;
  \item $\vartheta := f/\abs{f}\colon V \setminus K\to \SS^1$ is a
    fibration.
  \item $d\bigl( \alpha_V/\abs{f}\bigr)$ restricts to each fiber
    $\vartheta = \vartheta_0$ as an ideal Liouville structure.
  \end{enumerate}
  By Remark~\ref{rmk: top open book specified by complex function},
  the first three properties give that $f(\cdot, z)$ defines an open
  book decomposition on $V$.
  The fourth gives that it is a contact open book (see
  Lemma~\ref{lem:Liouville_OB_is_contact_OB} for details).
  To prove the first statement, let $p \in V$ be such that
  $f(p, z) = 0$.
  Since $\alpha \wedge d\alpha^{n+1}$ is a volume form by assumption,
  it follows from the first line of Equation~\eqref{eqn: expansion of
    BG contact form} that $d_V f_x \wedge d_V f_y$ does not vanish at
  $p$ so that $0$ is a regular value of $p \mapsto f(p, z)$.
  For the third statement, we observe that by combining the fact that
  $\alpha \wedge d\alpha^{n+1}$ is a volume form with the second line
  of Equation~\eqref{eqn: expansion of BG contact form},
  $d_V\vartheta$ cannot vanish on $V \setminus K$.
  Hence, $\vartheta(\cdot, z) \colon V \setminus K \to \SS^1$ is a
  submersion.
  In order to show that $K$ is non-empty and also to show that
  $d\bigl(\alpha_V/\abs{f} \bigr)$ restricts to the fibers as an ideal
  Liouville structure, we compute
  \begin{equation*}
    \Bigl( d_V \bigl( \tfrac{1}{\rho}\, \alpha_V \bigr) \Bigr)^n
    = \frac{1}{\rho^{n+2}}\, \bigl( 
    - n \rho \, d_V\rho \wedge \alpha_V \wedge (d\alpha_V)^{n-1}
    + \rho^2 d\alpha_V^n \bigr) \;.
  \end{equation*}
  Observe that Equation~\eqref{eqn: expansion of BG contact form} can
  be rearranged to obtain:
  \begin{equation*}
    \begin{split}
      \alpha \wedge (d\alpha)^{n+1} &= (n+1)\, \bigl[
      -n\, \rho\, d_V\rho\wedge \alpha_V\wedge (d\alpha_V)^{n-1} + \rho^2\, (d\alpha_V)^n \bigr] \wedge d_V\vartheta \wedge d\varphi_1\wedge d\varphi_2 \\
      &= (n+1)\, \rho^{n+2}\, \Bigl( d_V \bigl( \tfrac{1}{\rho}\,
      \alpha_V \bigr) \Bigr)^n \wedge d_V \vartheta \wedge d \varphi_1
      \wedge d\varphi_2 \;.
    \end{split}
  \end{equation*}
  This is a volume form by assumption, so it follows that
  $d\bigl(\alpha_V/\abs{f}\bigr)$ is symplectic when restricted to a
  fiber~$\vartheta = \vartheta_0$.
  For the sake of contradiction, suppose that $K$ is empty.
  In that case, the fiber
  $\{ p \in V \, | \, \vartheta(p,z) = \vartheta_0 \}$ is a closed
  submanifold of $V$ of dimension~$2n$.
  The restriction of $d\bigl(\alpha_V/\rho\bigr)$ to this submanifold
  is symplectic.
  By Stokes' theorem, this is only possible if the dimension of $V$ is
  $1$.
  Thus, $K$ is non-empty for $2n+1 \ge 3$.
  Having established that $K$ is non-empty, it follows that
  $d\bigl(\alpha_V/\rho\bigr)$ is an ideal Liouville domain structure
  on the closure of $\vartheta^{-1}(\vartheta_0)$.
  This then shows that $(\alpha_V, f)$ is a representation of a
  contact open book on $V$, as required.
\end{proof}

It follows in particular from this lemma that Bourgeois contact
structures as given by Definition~\ref{def:BourgeoisStruct} really are
contact structures.
In fact, Bourgeois structures are the special
Bourgeois-Gironella contact structures on $V\times \TT^2$ that are
invariant under the canonical torus action.
Gironella shows the non-obvious fact
\cite{GironellaBourgeoisStructs}*{Proposition 6.11} that the
$\TT^2$-average of any Bourgeois-Gironella contact form
$\alpha_V + \beta$ is also a contact form.
This averaging process gives us a canonical map from
Bourgeois-Gironella structures to Bourgeois structures.
We do not know of any example of a Bourgeois-Gironella contact form
that is not isotopic through Bourgeois-Gironella forms to its $\TT^2$
average.

\section{The Bourgeois structure for open books with inverted
  monodromy}\label{section: inverse monodromy}

In this section we will prove Theorem~\ref{thm: inverse monodromy same
  Bourgeois structure}.
To achieve this aim, we first describe an explicit modification of a
given contact structure supported by a contact open book.
The result of this construction will be a new contact structure that
is supported by an open book with identical pages and binding as the
first one, but with opposite coorientation.
We then show that the monodromies of the two open books are the
inverse of each other, and we conclude by studying how this
modification affects the Bourgeois construction.

\begin{lemma}\label{lemma: contact form inverse monodromy}
  Let $(V,\xi_+)$ be a contact manifold with a compatible open book
  decomposition~$(K,\vartheta)$, and let $\alpha_+$ be any contact
  form that is supported by this open book.
  The space of functions~$f = f_x + if_y\colon V \to \CC$ (writing
  $\abs{f}^2\, d\vartheta = f_x\,df_y - f_y \, df_x$) that satisfy the
  properties below is convex and non-empty
  \begin{itemize}
  \item [(i)] $(\alpha_+, f)$ is a representation of a Liouville open
    book on $(K,\vartheta)$ in the sense of
    Definition~\ref{definition:Liouville open book};
  \item [(ii)] the $1$-form
    \begin{equation*}
      \alpha_- \coloneqq \alpha_+ - C\, \abs{f}^2\, d\vartheta
    \end{equation*}
    is a contact form for every sufficiently large constant~$C \gg 1$.
  \end{itemize}
  The contact forms~$\alpha_+$ and $\alpha_-$ induce opposite
  orientations on $V$, $\alpha_-$ is adapted to the open book
  decomposition~$(K,\overline{\vartheta})$, and while its restriction
  to the binding and pages does not differ from the one of $\alpha_+$,
  the coorientation of pages and binding is reversed.
\end{lemma}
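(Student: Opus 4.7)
The plan is to address the non-emptiness and convexity of the set of admissible $f$'s first, then verify property (ii) by a direct expansion of $\alpha_- \wedge (d\alpha_-)^n$, and finally read off the structural claims about pages, binding and coorientations. Non-emptiness of the set of representations follows from the appendix (Proposition~\ref{prop:contact_open_book_is_Liouville_ob}), which produces a Liouville representation of $(K,\vartheta)$ that, after an appropriate rescaling of the $\CC$-valued function, has $\alpha_+$ as its first entry. The identity $\abs{f}^2\,d\vartheta = f_x\,df_y - f_y\,df_x$ forces $f = \rho\, e^{i\vartheta}$ with $\rho \ge 0$ vanishing exactly on $K$, so linear combinations of two admissible $f$'s correspond to affine combinations of the radial parts with positive coefficients; convexity of (i) and (ii) then reduces to checking that both properties are preserved by such combinations, which is where some care is required.

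The heart of the argument is the contact computation for (ii). From $\abs{f}^2 d\vartheta = f_x\,df_y - f_y\,df_x$ one gets
\begin{equation*}
  d\alpha_- = d\alpha_+ - 2C\,df_x \wedge df_y ,
\end{equation*}
and since $(df_x \wedge df_y)^2 = 0$, only the linear term survives in
\begin{equation*}
  (d\alpha_-)^n = (d\alpha_+)^n - 2nC\, df_x \wedge df_y \wedge (d\alpha_+)^{n-1}.
\end{equation*}
The apparent $C^2$ contribution to $\alpha_- \wedge (d\alpha_-)^n$ vanishes because $(f_x\,df_y - f_y\,df_x) \wedge df_x \wedge df_y = 0$, leaving
\begin{equation*}
  \alpha_- \wedge (d\alpha_-)^n = \alpha_+ \wedge (d\alpha_+)^n - C\, \Omega',
\end{equation*}
with $\Omega' = 2n\, \alpha_+ \wedge df_x \wedge df_y \wedge (d\alpha_+)^{n-1} + \abs{f}^2\,d\vartheta \wedge (d\alpha_+)^n$. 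Passing to polar coordinates via $df_x \wedge df_y = \rho\,d\rho\wedge d\vartheta$ one recognises $\Omega' = nA' + \Omega_V$, where $\Omega_V = n\rho\,d\rho\wedge d\vartheta\wedge \alpha_+ \wedge (d\alpha_+)^{n-1} + \rho^2\,d\vartheta\wedge (d\alpha_+)^n$ is exactly the bracketed positive volume form appearing in Equation~\eqref{eqn: expansion of BG contact form} (whose positivity is forced by the representation condition), and $A' = \rho\,d\rho\wedge d\vartheta \wedge \alpha_+ \wedge (d\alpha_+)^{n-1}$ is itself a non-negative top form, as is seen directly in the standard Giroux normal form near the binding. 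Hence $\Omega' > 0$ pointwise, and for every $C \gg 1$, $\alpha_- \wedge (d\alpha_-)^n$ is a volume form of the opposite sign, so $\alpha_-$ is contact and induces the opposite orientation of $V$.

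The remaining claims are then immediate: on each page $d\vartheta = 0$, so $\alpha_-$ and $\alpha_+$ have the same restriction, and on the binding $\abs{f}^2 = 0$, so again $\alpha_-|_K = \alpha_+|_K$. The ideal Liouville structure on the pages and the contact form on $K$ are thus preserved, while the global orientation reversal of $V$ flips the coorientations of both, which is precisely the content of replacing $(K,\vartheta)$ by $(K,\overline{\vartheta})$. The main technical obstacle I anticipate is establishing the pointwise positivity of $A'$ globally on $V \setminus K$: near the binding this is transparent from the local model, and it is automatic for the usual explicit representations, but verifying it in full generality (and checking that convexity in $f$ is not destroyed) is where the real work lies.
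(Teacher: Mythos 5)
Your computation of the contact condition is correct and in fact matches the paper's expansion: from $d\alpha_- = d\alpha_+ - 2C\, df_x\wedge df_y$ and the vanishing of the $C^2$-term you arrive at
\begin{equation*}
  \alpha_-\wedge (d\alpha_-)^n
  = \alpha_+\wedge (d\alpha_+)^n
  - C\bigl[\rho^2\, d\vartheta\wedge (d\alpha_+)^n
  + 2n\,\rho\, d\rho\wedge d\vartheta\wedge \alpha_+\wedge (d\alpha_+)^{n-1}\bigr],
\end{equation*}
and your decomposition $\Omega' = \Omega_V + nA'$ with $A' = \rho\,d\rho\wedge d\vartheta\wedge \alpha_+\wedge (d\alpha_+)^{n-1}$ is the right way to read the bracket. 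The structural conclusions (restrictions to pages and binding agree, global orientation flips, so $\alpha_-$ is adapted to $(K,\overline{\vartheta})$) are also fine.

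However, the step you flag as ``the main technical obstacle'' is precisely the missing idea, and without it the proof does not go through. The form $A'$ is \emph{not} non-negative for an arbitrary $f$ satisfying~(i): the representation condition only forces the combination $\Omega_V = nA' + \rho^2 d\vartheta\wedge (d\alpha_+)^n$ to be positive, and away from the binding the second summand can dominate a negative $A'$. In particular, the sign of $A'$ flips wherever $d\rho$ changes sign, which can certainly happen for a generic $\rho$. The point of the lemma is not that every $f$ satisfying~(i) also satisfies~(ii), but that one can \emph{choose} the radial part to make it so. The paper's construction picks $\rho = \abs{f}$ to increase linearly in the radial direction near $K$ and then become \emph{constant} outside a small tubular neighborhood of $K$. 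With this choice $d\rho \equiv 0$ away from $K$, so $A'$ vanishes there, while near $K$ the $2$-form $\rho\,d\rho\wedge d\vartheta$ is an area form on the normal disk and $\restricted{\alpha_+}{TK}$ is a positive contact form, giving $A' > 0$. Thus $A' \ge 0$ everywhere and $\Omega' > 0$, and for $C\gg 1$ the form $\alpha_-$ is contact with reversed orientation. This is exactly the non-emptiness claim, and it is a statement about a well-chosen $f$, not about all of them.

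The convexity verification is also left as ``where the real work lies,'' but it is worth knowing the mechanism: writing $\alpha_s = (1-s)\alpha_- + s\beta_-$ with $\beta_- = \alpha_+ - C\abs{g}^2 d\vartheta$, one observes that every term in $\alpha_s\wedge (d\alpha_s)^n$ contains at most one $d\vartheta$-factor (since $d\vartheta\wedge d\vartheta = 0$), so the $\abs{f}^2$- and $\abs{g}^2$-terms cannot mix and the top form is the literal convex combination $(1-s)\,\alpha_-\wedge(d\alpha_-)^n + s\,\beta_-\wedge(d\beta_-)^n$, which is nonzero by assumption. Your observation that convex combinations of admissible $f$'s act affinely on the radial part is a useful starting point, but it has to be combined with this $d\vartheta^2 = 0$ trick to close the argument.
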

\begin{proof}
  Let $f = f_x + if_y$ and $g = g_x+ig_y$ be two functions that
  satisfy the two properties stated above.
  We know from the appendix that the set of functions~$F$ such that
  $(\alpha_+, F)$ is a representation forms a non-empty convex set, so
  let us concentrate on property~(ii).
  Define for a sufficiently large $C\gg 1$ the two contact forms
  \begin{equation*}
    \alpha_- \coloneqq \alpha_+ - C\, \abs{f}^2\,d\vartheta
    \quad\text{ and }\quad
    \beta_- \coloneqq \alpha_+ - C\, \abs{g}^2\,d\vartheta \;.
  \end{equation*}
  We need to show that the interpolation
  \begin{equation*}
    \alpha_s \coloneqq (1-s)\,\alpha_- + s\,\beta_-
  \end{equation*}
  satisfies for all $s\in [0,1]$ the contact property.
  Writing $\alpha_s$ as
  \begin{equation*}
    \alpha_s \coloneqq \alpha_+ - C\,
    \bigl((1-s)\,\abs{f}^2 + s\abs{g}^2\bigr)\, d\vartheta\;,
  \end{equation*}
  it is obvious that all terms of
  $\alpha_s\wedge \bigl(d\alpha_s\bigr)^n$ contain at most one
  $d\vartheta$-factor, and in particular $\abs{f}^2$- and
  $\abs{g}^2$-terms will never mix.
  The contact condition simplifies to
  \begin{equation*}
    \alpha_s \wedge \bigl(d\alpha_s\bigr)^n =
    (1-s)\,\alpha_-\wedge \bigl(d\alpha_-\bigr)^n
    + s\,\beta_- \wedge \bigl(\beta_-\bigr)^n \ne 0 \;,
  \end{equation*}
  which is true by assumption thus proving the desired convexity
  property.
  We still need to show that it is not empty.
  Let $f = f_x + if_y\colon V \to \CC$ be a function defining the open
  book, and write for simplicity $\rho = \abs{f}$ so that
  $f_x\, df_y - f_y \, df_x = \rho^2 d\vartheta$.
  The condition that
  $d\bigl(\alpha_+/\abs{f}\bigr) = d\bigl(\alpha_+/\rho\bigr)$ is a
  Liouville form on each page can be verified by computing
  \begin{equation*}
    \rho^{n+2}\, d\vartheta \wedge \bigl(d(\alpha_+/\rho)\bigr)^n
    = \rho^2\, d\vartheta \wedge \bigl(d\alpha_+\bigr)^n + n\rho\,
    d\rho\wedge d\vartheta \wedge \alpha_+ \wedge
    \bigl(d\alpha_+\bigr)^{n-1} \ne 0 \;,
  \end{equation*}
  and the condition that $\alpha_-$ is a contact form is verified by
  computing
  \begin{equation*}
    \alpha_-\wedge d\alpha_-^n  = \alpha_+ \wedge (d\alpha_+)^n
    - C\, \bigl[\rho^2\,d\vartheta \wedge (d\alpha_+)^n +
    2n\, \rho\,d\rho\wedge d\vartheta \wedge
    \alpha_+\wedge (d\alpha_+ )^{n-1} \bigr] \;.
  \end{equation*}
  In both cases, the term $\rho^2\, d\vartheta\wedge (d\alpha_+)^n$ is
  never negative and only vanishes along the binding.
  The second term
  $\rho\,d\rho\wedge d\vartheta \wedge \alpha_+\wedge
  (d\alpha_+)^{n-1}$
  can be understood as follows: Along the binding the term is
  positive, since $\rho\,d\rho\wedge d\vartheta$ is an area form on
  the disk and because the restriction of $\alpha_+$ to the binding is
  by assumption a positive contact form.
  If $\rho$ is a function that increases linearly in radial direction
  at the binding $K$ and that is constant outside a sufficiently
  small neighborhood {of $K$}, 
  then it follows that
  $\rho\,d\rho\wedge d\vartheta$ is positive along the binding and
  everywhere else is non-negative.
  This shows that the function~$\rho$ can be chosen in such a way that
  $(\alpha_+, f)$ is a representation and such that $\alpha_-$ will be
  for any sufficiently large $C$ a contact form.
  It remains to show that $\xi_- = \ker \alpha_-$ is supported by
  $(K,\overline{\vartheta})$.
  For this note that $(K,\overline{\vartheta})$ and $(K,\vartheta)$
  have the same pages and binding.  The restriction of $\alpha_-$ and
  $\alpha_+$ agree on both subsets, since the additional term vanishes
  when restricted to either.
  The contact forms $\alpha_+$ and $\alpha_-$ induce opposite
  orientations on $V$, which is compatible with the choice of
  coorientations given by $\vartheta$ and $\overline{\vartheta}$
  respectively.
\end{proof}

\begin{lemma}\label{lemma: constructed form has inverse monodromy}
  Assume we are in the setup of the previous lemma.
  The abstract Liouville open books corresponding to $\alpha_-$ and
  $(K,\overline{\vartheta})$ and to $\alpha_+$ and $(K,\vartheta)$
  have identical ideal Liouville domains as pages, but their
  monodromies are the inverse of each other.
\end{lemma}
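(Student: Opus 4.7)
The plan is to split the statement into two parts and address them in turn: first that the pages with their ideal Liouville structures are identical, and then that the two monodromies are inverse elements of the symplectic mapping class group of the common page.

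For the first part, the key observation is that on any page $P = \vartheta^{-1}(\vartheta_0) = \overline{\vartheta}^{-1}(-\vartheta_0)$, the $1$-form $d\vartheta$ vanishes on $TP$. Therefore the restriction of $\alpha_-/\rho = \alpha_+/\rho - C\rho\, d\vartheta$ to $P$ agrees with the restriction of $\alpha_+/\rho$, so the Liouville forms on the pages coincide. Because $\rho$ and $K$ are unchanged, the ideal boundary is the same, which will give the equality of ideal Liouville domains. The orientations on $P$ also match: by Lemma~\ref{lemma: contact form inverse monodromy} the forms $\alpha_+$ and $\alpha_-$ induce opposite orientations on $V$, while the coorientations $d\vartheta$ and $d\overline{\vartheta} = -d\vartheta$ are also opposite, so the two sign flips cancel.

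For the monodromy, I would recall that, up to isotopy in the symplectic mapping class group, the monodromy of a Liouville open book representation $(\alpha, f)$ is the time-$2\pi$ return map of any vector field $X$ on $V\setminus K$ satisfying $d\vartheta(X) = 1$ and whose flow preserves the page symplectic form $d(\alpha/\rho)|_P$. Such an $X$ is the horizontal lift associated to a symplectic connection on the fibration $\vartheta\colon V \setminus K \to \SS^1$, and any two symplectic connections produce return maps related by a symplectic isotopy. Near the binding, the standard normal form forces $X$ to rotate the angular coordinate, so the return map is the identity near $K$ and extends as the identity across the ideal boundary of $P$.

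Now let $X_+$ be such a vector field for $(\alpha_+, (K, \vartheta))$, with associated monodromy $\Psi_+$. The vector field $-X_+$ automatically preserves the same symplectic form on each page and satisfies $d\overline{\vartheta}(-X_+) = d\vartheta(X_+) = 1$, so it is admissible for computing the monodromy of $(\alpha_-, (K, \overline{\vartheta}))$. Its time-$2\pi$ return map is precisely the time-$(-2\pi)$ return map of $X_+$, namely $\Psi_+^{-1}$, whence the two monodromies are inverse in the mapping class group. The main obstacle in carrying this out will be to justify cleanly that the monodromy class depends only on the symplectic connection rather than on the particular contact form $\alpha_\pm$ defining it, and to control the behaviour near the binding where $\alpha/\rho$ blows up — this is exactly where Giroux's formalism of ideal Liouville domains, reviewed in the appendix, is essential.
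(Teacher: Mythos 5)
Your proposal is correct and follows essentially the same strategy as the paper's proof: (1) on every page, $d\vartheta$ vanishes, so the ideal Liouville forms $\alpha_+/\rho$ and $\alpha_-/\rho = \alpha_+/\rho - C\rho\,d\vartheta$ restrict identically, giving the same pages; (2) the negative of a spinning vector field for $(\alpha_+, \vartheta)$ spins for $(\alpha_-, \overline{\vartheta})$, and its return map is the inverse of the original. The main point of divergence is in how step (2) is verified. The paper fixes the specific spinning vector field $Y_+$ from Lemma~\ref{lemma: exists vector field to recover monodromy} (characterized by $\iota_{Y_+}d\lambda_+ = 0$), and then checks directly that $Y_- = -Y_+$ spins for $d\lambda_-$ by an explicit Lie derivative computation: $\lie{Y_-}d\lambda_- = C(\lie{Y_+}d\rho)\wedge d\vartheta$, which vanishes on pages. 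You instead argue more conceptually that reversing the flow of any spinning field automatically preserves page Liouville structures, since the reverse flow is the inverse of a family of symplectomorphisms between pages. Both routes are valid; the paper's explicit computation also quietly handles regularity near the binding, since the identity $\lie{Y_-}d\lambda_- = C(\lie{Y_+}d\rho)\wedge d\vartheta$ makes plain that nothing blows up. The obstacle you flag at the end --- well-definedness of the monodromy class independently of the choice of spinning field --- is handled in the paper simply by the convexity of the set of spinning vector fields, noted after Lemma~\ref{lemma: exists vector field to recover monodromy}, and does not require any further Floer-theoretic or connection-theoretic input.
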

\begin{proof}
  Let $f = f_x + if_y$ be the function used in the previous lemma.
  It is easy to check that $\overline f = f_x - if_y$ is a function
  defining $(K,\overline{\vartheta})$ and since the restrictions of
  $\alpha_+$ and $\alpha_-$ agree on all pages, it is clear that
  $\alpha_- / \abs{\overline{f}} = \alpha_+ / \abs{f}$ defines on
  every page the same ideal Liouville structure as the initial open
  book.
  This shows that the pages of the abstract open book corresponding to
  $(K,\vartheta)$ with contact form~$\alpha_+$ and the ones
  corresponding to $(K,\overline{\vartheta})$ with contact
  form~$\alpha_-$ are identical as ideal Liouville domains.
  We set $\lambda_+ := \alpha_+/\abs{f}$, and
  $\lambda_- := \alpha_-/\abs{f}$.
  Recall that we recover the monodromy of the Liouville open book
  $(K,\vartheta, d\lambda_+)$ by following the flow of a spinning
  vector field from an initial page back to itself.
  By Lemma~\ref{lemma: exists vector field to recover monodromy}, we
  can specify a unique spinning vector field~$Y_+$ by the equations
  \begin{equation*}
    d\vartheta(Y_+) = 2\pi \quad\text{ and }\quad
    \iota_{Y_+} d\lambda_+ = 0 \;.
  \end{equation*}
  We claim that $Y_- = - Y_+$ is a spinning vector field for the
  Liouville open book $(K,\overline{\vartheta}, d\lambda_-)$.
  Clearly $Y_-$ vanishes along the binding and
  $d\overline{\vartheta}(Y_-) = +2\pi$.
  It only remains to show that the flow of $Y_-$ preserves the ideal
  Liouville structure on every page.
  For this simply compute
  \begin{equation*}
    \lie{Y_-} d\lambda_- = - \lie{Y_+} \bigl(d\lambda_+ - C\,
    d\bigl(\frac{\rho^2}{\abs{f}}\bigr)\wedge d\vartheta \bigr) =
    C\, \lie{Y_+} \bigl(d\rho \wedge d\vartheta\bigr)
    = C\,\bigl(\lie{Y_+} d\rho\bigr)
    \wedge d\vartheta \;.
  \end{equation*}
  Since $d\vartheta$ vanishes on every page, we see that $Y_-$ is
  indeed a spinning vector field for $d\lambda_-$.
   The time-$1$ flow of $Y_-$ is obviously the inverse of the
    time-$1$ flow of $Y_+$, thus we have shown that the corresponding
    abstract open books have the equal page and that the monodromies
    are the inverse of each other.
\end{proof}

We now show that inverting the monodromy of an open book has no
influence on the Bourgeois construction.

\begin{theorem_inverse_monodromy}
  Let $(V,\xi_+)$ and $(V,\xi_-)$ be closed contact manifolds
  supported by abstract Liouville open books that have the same page
  but inverse monodromy.
  Then the two corresponding Bourgeois structures on $V\times \TT^2$
  are contactomorphic.
\end{theorem_inverse_monodromy}

This is a corollary of
Lemma~\ref{lemma: constructed form has inverse monodromy} combined
with the following result.

\begin{lemma}
  Assume we are in the setup of Lemma~\ref{lemma: contact form inverse
    monodromy}, so that $(V, \xi_+)$ is a contact manifold with a
  compatible open book decomposition~$(K, \vartheta)$ that is
  represented by $(\alpha_+, f)$ and $\xi_- = \ker(\alpha_-)$ with
  $\alpha_- = \alpha_+ - C\, \abs{f}^2\, d\vartheta$ for sufficiently
  large $C$ is a contact structure on $V$ that is supported by the
  open book~$(K, \overline{\vartheta})$.
  Then, any Bourgeois contact structure on $V \times \TT^2$ associated
  to the contact open book $(\xi_+, K, \vartheta)$ and the standard
  orientation of $\TT^2$ is isotopic through contact structures to the
  Bourgeois contact structure on $V \times \TT^2$ associated to
  $(\xi_-, K, \overline{\vartheta})$ and the reversed orientation on
  $\TT^2$.
\end{lemma}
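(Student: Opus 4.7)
The plan is to build an explicit $1$-parameter family of contact forms on $V \times \TT^2$ linking the Bourgeois form for $(\xi_+, K, \vartheta)$ with the standard orientation to the Bourgeois form for $(\xi_-, K, \overline{\vartheta})$ with the reversed orientation. Writing the latter pair first requires identifying it concretely: using the positively oriented (with respect to the reversed orientation) coordinate system $(\psi_1, \psi_2) = (\varphi_1, -\varphi_2)$ and the representation $(\alpha_-, \overline{f})$, for which $\overline{f}_x = f_x$ and $\overline{f}_y = -f_y$, Definition~\ref{def:BourgeoisStruct} produces the Bourgeois form $\alpha^- = \alpha_- + \overline{f}_x\,d\psi_1 - \overline{f}_y\,d\psi_2 = \alpha_- + f_x\,d\varphi_1 - f_y\,d\varphi_2$. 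On the other side, for $(\xi_+, K, \vartheta)$ with the standard orientation the Bourgeois form is $\alpha^+ = \alpha_+ + f_x\,d\varphi_1 - f_y\,d\varphi_2$, so that $\alpha^- - \alpha^+ = \alpha_- - \alpha_+ = -C\,\abs{f}^2\,d\vartheta$.

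This suggests the naive linear interpolation $\alpha_t \coloneqq \alpha^+ - tC\,\abs{f}^2\,d\vartheta$ for $t\in[0,1]$, which already has the correct endpoints $\alpha_0 = \alpha^+$ and $\alpha_1 = \alpha^-$. The remaining task is to show that $\alpha_t$ is a contact form for every $t$. I would compute $\alpha_t \wedge (d\alpha_t)^{n+1}$ directly. The key algebraic inputs are the identity $d\bigl(\abs{f}^2\,d\vartheta\bigr) = d(f_x\,df_y - f_y\,df_x) = 2\,df_x\wedge df_y$, the fact that $df_x\wedge df_y$ is a $2$-form that squares to zero, and the vanishing $\abs{f}^2\,d\vartheta\wedge df_x\wedge df_y = (f_x\,df_y - f_y\,df_x)\wedge df_x\wedge df_y = 0$. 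After collecting terms, the $t$-dependent contributions cancel, leaving the equality $\alpha_t\wedge (d\alpha_t)^{n+1} = \alpha^+\wedge(d\alpha^+)^{n+1}$, which is a positive volume form by assumption. Thus $(\ker\alpha_t)_{t\in[0,1]}$ is a smooth path of contact structures connecting the two Bourgeois contact structures, as claimed.

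The main obstacle is really the bookkeeping: one has to be careful with the choice of coordinates implementing the reversed orientation on $\TT^2$, so that the resulting form $\alpha^-$ genuinely represents the Bourgeois contact structure associated to $(\xi_-, K, \overline{\vartheta})$ and the reversed orientation, rather than some neighboring variant. Once the interpolation is set up correctly, the volume form computation is essentially the one already carried out in the proof of Lemma~\ref{lem:characterization_BG_struct}, enriched only by the extra term involving $d\vartheta$, whose contribution conveniently drops out thanks to the two vanishing identities above.
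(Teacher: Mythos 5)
Your proposal is correct and follows the paper's proof almost verbatim: the same linear interpolation $\alpha_t = \alpha^+ - tC\,\abs{f}^2\,d\vartheta$, the same bookkeeping identifying the endpoint with the Bourgeois form for $(\alpha_-, \overline{f})$ and the reversed orientation, and the same direct check that $\alpha_t\wedge(d\alpha_t)^{n+1}$ is $t$-independent. The paper adds one streamlining observation you omit: $\alpha_t = \Phi_t^*\alpha_0$ for the explicit fiberwise diffeomorphism $\Phi_t(p;\varphi_1,\varphi_2) = (p;\,\varphi_1 - tCf_y,\,\varphi_2 - tCf_x)$, which makes the contactness of the whole path immediate without any algebra.
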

\begin{proof}
  With the notation as in Lemma~\ref{lemma: contact form inverse
    monodromy}, it follows that $(\alpha_-, \overline{f})$ is a
  representation of the open book $(\xi_-,K, \overline{\vartheta})$,
  where $\overline{f} = f_x - i f_y$ denotes the complex conjugate.
  From Definition~\ref{def:BourgeoisStruct}, the Bourgeois contact
  structure associated to $(\alpha_+, f)$ (and the standard
  orientation on $\TT^2$) is given by
  \begin{equation*}
    \alpha_+ + f_x\,d\varphi_1 - f_y\, d\varphi_2 \;.
  \end{equation*}
  Consider now the parametric family of $1$-forms given by
  \begin{equation*}
    \alpha_\tau = \alpha_+ + f_x \, d\varphi_1 - f_y \, d\varphi_2
    -\tau C \,\abs{f}^2 \, d\vartheta, \qquad 0 \le \tau \le 1 \;.
  \end{equation*}
  A direct computation shows that
  $\alpha_\tau \wedge (d\alpha_\tau)^{n+1} = \alpha_0 \wedge
  (d\alpha_0)^{n+1}$, and thus these are all contact forms.
  Very explicitly, we observe that
  $\alpha_\tau = \Phi_\tau^*\alpha_0$, with $\Phi_\tau$ given by
  \begin{align*}
    \Phi_\tau \colon V \times \TT^2 &\to V \times \TT^2 \\
    (p; \varphi_1, \varphi_2) &\mapsto (p;\, \varphi_1 - \tau C f_y, \,
                                \varphi_2 - \tau C f_x) \;.
  \end{align*}
  Now, observe that
  $\alpha_1 = \alpha_- + f_x \, d\varphi_1 - f_y \, d\varphi_2$, which
  is the Bourgeois form on $V \times \TT^2$ associated to the
  representation $(\alpha_-, \overline{f})$ and the orientation on
  $\TT^2$ given by $(\partial_{\varphi_1}, -\partial_{\varphi_2})$.
\end{proof}

Finally we obtain the desired contactomorphism for Theorem~\ref{thm:
  inverse monodromy same Bourgeois structure} by composing the isotopy
from the previous lemma with the diffeomorphism
$(p;\varphi_1,\varphi_2) \mapsto (p;\varphi_1,-\varphi_2)$ on
$V\times \TT^2$.

\section{Explicit constructions of fillings}\label{sec: explicit
  fillings}

In this section, we will prove Theorem~\ref{thm: fillings for V that
  imply fillings for VxT2} from the introduction.
Let $(V,\xi)$ be a contact manifold, and let $\alpha_V$ be a contact
form for $\xi$.
A symplectic manifold~$(W,\omega)$ is called a \defin{weak filling} of
$(V,\xi)$ {(see \cite{WeakFillabilityHigherDimension})}, if $W$
is compact with (oriented) boundary~$\p W = V$, and if for every
$T\in [0,\infty)$
\begin{equation*}
  \alpha_V \wedge (T\,d \alpha_V + \omega)^n > 0 \;,
\end{equation*}
where $\dim V = 2n+1$.
The following argument was inspired by a $3$-dimensional proof in
\cite{Giroux_plusOuMoins}, and has been sketched in
\cite{WeakFillabilityHigherDimension}*{Example~1.1}.
A proof mostly identical to ours has recently appeared in
\cite{GironellaBourgeoisStructs}, but since the argument is relatively
short we prefer to restate it here for completeness of our
presentation.

\begin{theorem_weak_filling}
  Let $(V,\xi)$ be a contact manifold that is weakly filled by
  $(W,\omega)$, and let $(K,\vartheta)$ be any open book that is
  compatible with $\xi$.
  Then the associated Bourgeois contact structure on $V\times \TT^2$
  is isotopic to a contact structure that can be weakly filled by
  $(W\times \TT^2, \omega\oplus \vol_{\TT^2})$.
\end{theorem_weak_filling}
\begin{proof}
  Using the modified Bourgeois contact form~$\alpha_\epsilon$ from
  Remark~\ref{rmk: modifications of Bourgeois structure}, we obtain by
  \begin{equation*}
    P_\epsilon(T) \coloneqq \alpha_\epsilon\wedge
    \bigl(T\, d\alpha_\epsilon + \omega + \vol_{\TT^2}\bigr)^{n+1}
  \end{equation*}
  a family of polynomials of degree at most $n+1$ in $T$ with
  coefficients in $\Omega^{2n+1}(V\times \TT^2)$ that depend smoothly
  on $\epsilon$.
  We will show that if $\epsilon> 0$ is chosen sufficiently small,
  then $P_\epsilon(T)$ will be positive for every $T\in [0,\infty)$, 
  so that $(W\times \TT^2, \omega\oplus \vol_{\TT^2})$ is a
  weak filling of $\ker\alpha_\epsilon$, which by Remark~\ref{rmk:
    modifications of Bourgeois structure} is isotopic to $\ker\alpha$.
  First note that the leading term of $P_\epsilon$ is
  $\epsilon^2\,\alpha\wedge d\alpha^{n+1}\, T^{n+1}$.
  Its coefficient vanishes for $\epsilon = 0$, but is strictly
  positive for $\epsilon \ne 0$.
  For  $\epsilon = 0$, we compute
  \begin{equation*}
    P_0(T)  = \alpha_V\wedge \bigl(T\, d\alpha_V +
    \omega + \vol_{\TT^2}\bigr)^{n+1} 
    = (n+1)\,\alpha_V\wedge \bigl(T\, d\alpha_V + \omega)^n\wedge
    \vol_{\TT^2} \;.
  \end{equation*}
  This form is strictly positive for all $T\in[0,\infty)$ by the
  assumption that $(W,\omega)$ is a weak filling of $(V,\xi)$.
  Furthermore we see that $P_0(T)$ is of degree~$n$ in $T$ with a
  strictly positive coefficient for the leading term.
  Any small perturbation of $P_0$ \emph{inside the polynomials of
    degree~$n$} will also be strictly positive on $T\in[0,\infty)$:
  If we choose a sufficiently large~$T_0$, the leading term of
  $P_0(T)$ dominates 
  the remaining terms of the
  polynomial
  for $T > T_0$.
  Thus none of the polynomials of degree~$n$ that are close to $P_0$
  will vanish for $T > T_0$.
  On the other hand, if we only consider a compact interval~$[0,T_0]$,
  it follows by continuity that a small perturbation of $P_0$ (even in
  the space of continuous functions) cannot vanish on $[0,T_0]$
  either.
  Combining this with the positivity of the coefficient for
  $T^{n+1}$-term in $P_\epsilon$ we obtain the desired result.
\end{proof}

\vspace{0.5cm}

Before proving part~(b) of Theorem~\ref{thm: fillings for V that imply
  fillings for VxT2}, we will briefly recall the basic definitions on
Weinstein manifolds.
A \defin{Weinstein manifold} $(W, \omega, X, f)$ is a symplectic
manifold $(W,\omega)$ without boundary, together with
\begin{itemize}
\item [(i)] a complete vector field~$X$ such that
  $\lie{X}\omega = \omega$, a so-called complete \defin{Liouville
    vector field}, and
\item [(ii)] a proper Morse function $f\colon W\to [0,\infty)$ that is
  a Lyapunov function for $X$, meaning that there is a positive
  constant~$\delta$ such that
  $df(X) \ge \delta\cdot (\norm{X}^2 + \norm{df}^2)$ with respect to
  some Riemannian metric.
\end{itemize}
Other definitions may not require $f$ to be a Morse function, but we
follow \cite{CieliebakEliashberg} and just note that a given Weinstein
manifold is symplectomorphic to one whose Lyapunov function is Morse.
The topology of a Weinstein manifold~$(W, \omega, X, f)$ is relatively
restricted, because the index of every critical point of $f$ is less
than or equal to half the dimension of $W$.
If $f$ has only critical points of index strictly less than
$\frac{1}{2} \dim W$, then we say that $(W, \omega, X, f)$ is a
\defin{subcritical Weinstein manifold}; and if $f$ has only critical
points of index not more than $\frac{1}{2} \dim W - k$, then we say
that $(W, \omega, X, f)$ is \defin{$k$-subcritical}.
The complex plane with the standard symplectic
form~$\omega_0 = dx\wedge dy$, the Liouville vector
field~$X_0 = \frac{1}{2}\,(x\,\partial_x + y\, \partial_y)$, and Morse
function $f_0(x+ i y) = x^2 + y^2$ is a Weinstein manifold.
The \defin{stabilization of a Weinstein manifold~$(W,\omega, X,f)$} is
the product Weinstein manifold
$(W\times \CC, \omega \oplus \omega_0, X \oplus X_0, f + f_0)$.
The stabilization of any Weinstein manifold is subcritical, and
according to the following result by Cieliebak
\citelist{\cite{Cieliebak_subcritStein_split}
  \cite{CieliebakEliashberg}*{Section~14.4}}, subcritical Weinstein
manifolds are essentially stabilizations.

\begin{theorem}[Cieliebak]\label{thm: Cieliebak}
  Every subcritical Weinstein manifold of dimension~$2n$ is
  symplectomorphic to the stabilization of a Weinstein manifold of
  dimension~$2n-2$.
\end{theorem}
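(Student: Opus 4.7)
The plan is to prove this by induction on the number of critical points of the Morse function $f$ of positive index. For the base case, suppose $f$ has only index-$0$ critical points. Then, by standard Morse theory and connectedness of sublevel sets, each connected component has a unique minimum, and completeness of the Liouville vector field~$X$ forces each component to be symplectomorphic to $\CC^n = \CC^{n-1}\times \CC$, which is manifestly a stabilization.

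For the inductive step, let $p$ be a critical point of $f$ of smallest positive critical value $c$, with index $k$ satisfying $1\le k\le n-1$. The sublevel set $W_{<c} = f^{-1}\bigl([0, c-\varepsilon]\bigr)$ is a subcritical Weinstein domain, and by the inductive hypothesis applied to its completion, we may assume that it takes the form $W'_{<c}\times \CC$ for some Weinstein domain $W'_{<c}$ of dimension~$2n-2$. The Weinstein handle corresponding to~$p$ is then attached to $\p W_{<c} = \p \bigl(W'_{<c}\times \CC\bigr)$ along an isotropic sphere $\Lambda\cong \SS^{k-1}$, equipped with its trivialized conformal symplectic normal bundle.

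The key technical step is to isotope $\Lambda$, through isotropic embeddings, into a slice $\p W'_{<c}\times \{z_0\}$ for some $z_0\in \CC$, so that its framing becomes compatible with the product decomposition. Since $\dim \Lambda = k-1\le n-2$, this is a low-dimensional isotropic embedding into a contact manifold of dimension~$2n-1$, so the codimension is large enough for Gromov's $h$-principle for subcritical isotropic embeddings to apply; the formal obstruction to such an isotopy vanishes because the extra $\CC$-factor in $W'_{<c}\times \CC$ provides free room to move the sphere and because the normal symplectic bundle to $\{z_0\}\subset \CC$ is trivial. Once $\Lambda$ (with framing) has been pushed into such a slice, the Weinstein handle at $p$ can be modeled as a handle attached within the $W'_{<c}$ factor, so that the post-attachment Weinstein manifold $W_{\le c}$ is again of the form $W'_{\le c}\times \CC$. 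Iterating through all critical points of positive index (in increasing order of critical value) then yields a global symplectomorphism $W\cong W'\times \CC$.

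The main obstacle is precisely the isotropic isotopy of~$\Lambda$ together with its framing into a fixed slice of the $\CC$-factor: one needs the $h$-principle to run in the Weinstein category and must verify that after the isotopy the handle attachment extends to a \emph{global} symplectomorphism rather than only a local one near the handle. A further subtlety is that successive inductive steps may require Cieliebak--Eliashberg style handle slides to put the attaching spheres of different handles into mutually generic position with respect to the same $\CC$-factor; controlling the choice of slice~$z_0$ uniformly is what ultimately pins down the base manifold~$W'$ up to Weinstein homotopy, and hence up to symplectomorphism.
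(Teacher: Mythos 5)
This theorem is not proved in the paper at all; it is cited as an external result of Cieliebak (\cite{Cieliebak_subcritStein_split}, and \cite{CieliebakEliashberg}*{Section~14.4}), so there is no paper proof to match against. Your sketch does follow the broad outline of Cieliebak's actual argument: handle-by-handle induction on the Lyapunov function, combined with an $h$-principle to split off a $\CC$-factor. However, as a proof it has a genuine gap precisely at the step you yourself flag as ``the main obstacle,'' and it cannot be waved away the way you attempt.

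The issue is twofold. First, after invoking the inductive hypothesis on the completion of $W_{<c}$ you obtain a symplectomorphism $\Phi$ onto some $W'_{<c}\times\CC$, but this symplectomorphism need not carry the domain $W_{<c}$ to a product domain, nor need it interact nicely with the level-set boundary on which the next handle is attached; one has to run the induction at the level of Weinstein \emph{homotopies} of the domain data, not merely symplectomorphisms of the completions, which is what Cieliebak--Eliashberg do via their ``Weinstein homotopy'' framework. Second, and more substantively, the assertion that ``the formal obstruction to such an isotopy vanishes because the extra $\CC$-factor provides free room'' is not a proof. The relevant object is not the bare isotropic sphere $\Lambda$ but the pair (isotropic embedding, trivialization of the conformal symplectic normal bundle), and Gromov's $h$-principle for subcritical isotropic embeddings handles the sphere, not the framing. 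Identifying the $\CC$-summand of the normal bundle and propagating this choice consistently across all handles requires the careful arguments in \cite{CieliebakEliashberg}*{Chapter~14} (this is in fact the very point at which Cieliebak's original paper required a correction). Your sketch names the right ingredients and the right difficulty, but the difficulty is the proof, and it is left unresolved.
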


Note that we only use this theorem to obtain that any compact
Lagrangian in a subcritical Weinstein domain is Hamiltonian
displaceable.
This simpler result is given by
\cite{BiranCieliebak_subcritical_Lagrangian}*{Lemma~3.2}.
A regular level set~$M_c = f^{-1}(c)$ of a Weinstein
manifold~$(W,\omega, X,f)$ carries a natural contact structure given
by the kernel of the $1$-form
$\alpha_c \coloneqq \restricted{\omega(X,\cdot)}{T M_c}$.
We say that a contact manifold~$(V,\xi)$ is \defin{(subcritically)
  Weinstein fillable}, if it is contactomorphic to a regular level
set~$(M_c, \ker \alpha_c)$ of a (subcritical) Weinstein
manifold~$(W,\omega, X,f)$ such that all critical values of $f$ are
strictly smaller than $c$.
Let $(V,\xi)$ be a contact manifold that is subcritically filled by a
stabilized Weinstein manifold
$(W\times \CC, \omega \oplus \omega_0, X \oplus X_0, f + f_0)$.
A computation shows that $(V, \xi)$ is supported by the open book with
binding~$K_0 = V \cap \bigl(W\times \{0\}\bigr)$ and fibration
$\vartheta_0\colon V\setminus K_0 \to \SS^1, (p,z) \mapsto z/\abs{z}$.
{The corresponding abstract open book has page~$W$ and trivial
  monodromy.}
The details of this are carried out in Example~\ref{example:trivial
  monodromy}.
The following proposition finishes the proof of Theorem~\ref{thm:
  fillings for V that imply fillings for VxT2}.
It shows that certain Bourgeois contact structures are Weinstein
fillable.

\begin{theorem_subcrit_filling}
  Let $(V,\xi)$ be a closed contact manifold that is subcritically
  filled by the Weinstein manifold
  $(W\times \CC, \omega \oplus \omega_0, X \oplus X_0, f + f_0)$.
  Let $(K_0, \vartheta_0)$ be the associated open book with trivial
  monodromy.
  The Bourgeois contact structure on $V\times \TT^2$ obtained by using
  the contact open book~$(K_0, \vartheta_0)$ can be filled by the
  Weinstein manifold
  \begin{equation*}
    \bigl(W\times T^*\TT^2, \omega \oplus d\lcan, X \oplus X_{\TT^2},
    f + f_{\TT^2}\bigr) \;,
  \end{equation*}
  where the cotangent bundle of $\TT^2$ is written with coordinates
  $(q_1,q_2; p_1,p_2)\in \TT^2 \times \RR^2$,
  $X_{\TT^2} = p_1\,\p_{p_1} + p_2\,\p_{p_2}$, and
  $f_{\TT^2} = p_1^2 + p_2^2$.
\end{theorem_subcrit_filling}
\begin{proof}
  Identify $(V,\xi)$ with the regular level set~$M_c$ in
  $\bigl(W\times \CC, J\oplus i\bigr)$.
  The Bourgeois structure on $M_c\times \TT^2$ is given by the contact
  form
  \begin{equation*}
    \alpha =  \lambda_W + x\, dy - y\, dx +  x\,d\varphi_1 - y\,d\varphi_2
  \end{equation*}
  where $\lambda_W$ is the Liouville form~$\iota_X\omega$ on $W$,
  $z=x+iy$ are the coordinates on $\CC$, and $(\varphi_1,\varphi_2)$
  are the coordinates on the torus.
  The diffeomorphism from $W\times \CC\times \TT^2$ to
  $W\times T^*\TT^2$ that sends
  $(x,y;\varphi_1,\varphi_2) \in \CC\times \TT^2$ to
  $(q_1,q_2;p_1,p_2) = (-\varphi_1 - y,\varphi_2 + x; x,y) \in
  T^*\TT^2$
  and keeps the $W$-factor unchanged is the desired contactomorphism.
  Note in particular that it pulls back $f+f_{\TT^2}$ to $f + f_0$.
\end{proof}

\section{Obstructions to subcritical fillings}\label{sec: subcritical
  fillings}

The aim of this section is to show that most Bourgeois structures are
not subcritically fillable.
We will first introduce the necessary preliminaries to prove
Theorem~\ref{thm: main obstruction result}.
Let $(V,\xi)$ be a contact manifold.

\begin{definition}
  A submanifold~$P$ of a contact manifold $(V,\xi)$ is called
  \defin{pre-Lagrangian}
  \begin{itemize}
  \item if $\dim P = \frac{1}{2}\,(\dim V + 1)$ and
  \item if there exists a contact form $\alpha$ for $\xi$ such that
    $\restricted{d\alpha}{TP} = 0$.
  \end{itemize}
  It is easy to see that $\xi$ induces a \emph{regular} Legendrian
  foliation on such a $P$.
\end{definition}

The symplectization~$(SV,d\lcan)$ of $(V,\xi)$ is the submanifold
\begin{equation*}
  SV \coloneqq \bigl\{(p,\eta_p) \in T^*V \bigm|\,
  \text{$\ker\,\eta_p = \xi_p$ and
    $\eta_p$ agrees with the coorientation of $\xi_p$}\bigr\}
\end{equation*}
of the cotangent bundle of $V$, where $\lcan$ denotes the restriction
of the canonical Liouville $1$-form of $T^*V$.
We denote by $\pi_V\colon SV\to V$ the projection $\pi_V(p,\eta_p) =
p$.
The choice of a contact form~$\alpha$ for $\xi$ allows us to identify
$(SV,d\lcan)$ with $\bigl(\RR\times V, d(e^t\alpha)\bigr)$ via the map
$(t,p) \in \RR\times V \mapsto e^t\alpha_p \in SV$ making use of the
tautological property $\beta^*\lcan = \beta$ for
$\beta \in \Omega^1(V)$.
An equivalent definition of $P\subset V$ being pre-Lagrangian is to
say that the symplectization contains a Lagrangian~$L\subset SV$ such
that the projection $\pi_V\colon SV\to V$ restricts to a
diffeomorphism $\restricted{\pi_V}{L}\colon L \to P$ (see
\cite{EliashbergHoferSalamon}*{Proposition~2.2.2}).
Every such Lagrangian is called a \defin{Lagrangian lift of $P$}.
These lifts are related to the choice of a contact form~$\alpha$ with
$\restricted{d\alpha}{TP} = 0$ by $L = \alpha(P)$, where $\alpha$ is
regarded as a section $V \to SV$.
Gromov calls a Lagrangian~$L$ in a symplectic manifold $(W,\omega)$
\defin{weakly exact} \cite{Gromov_HolCurves}*{2.3.B$_3$} if
$\int_{\DD^2}u^*\omega$ vanishes for every smooth map
\begin{equation*}
  u\colon (\DD^2,\p \DD^2)\to (W,L) \;.
\end{equation*}
In the spirit of
\cite{MassotNiederkrugerNonconnectedContactomorphisms} we use the same
notion for pre-Lagrangians:
a pre-Lagrangian~$P$ in a contact manifold $(V,\xi)$ is called
\defin{weakly exact} if for every contact form~$\alpha$ with
$\restricted{d\alpha}{TP} = 0$ and for every smooth map $u\colon
(\DD^2, \p \DD^2) \to (V,P)$, the integral $\int_{\DD^2}u^*d\alpha$
vanishes.
In fact, if this integral is zero for one such form, then it is zero
for every $\alpha'$ for which $\restricted{d\alpha'}{TP} = 0$.
In contrast to the Lagrangian case where weak exactness is a rather
subtle symplectic property, the weak exactness for pre-Lagrangians
reduces to the following topological observation:

\begin{lemma}\label{lemma: exact pre-Lagrangians and positive loops}
  A closed pre-Lagrangian~$P\subset (V,\xi)$ is weakly exact if and only if
  every smooth loop in $P$ that is positively transverse to the
  foliation $\fF \coloneqq \xi\cap TP$ is non-trivial in $\pi_1(V)$.
\end{lemma}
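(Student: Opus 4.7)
The plan is to reformulate weak exactness as a statement about integrals of $\alpha$ along loops in $P$ via Stokes' theorem and then to reduce the non-trivial implication to a purely topological fact about closed nowhere-vanishing $1$-forms. Fix a contact form~$\alpha$ with $\restricted{d\alpha}{TP} = 0$, so that $\omega \coloneqq \restricted{\alpha}{P}$ is a closed nowhere-vanishing $1$-form on $P$ whose kernel is the Legendrian foliation~$\fF$. A loop~$\gamma\colon \SS^1\to P$ is positively transverse to $\fF$ precisely when $\omega(\dot\gamma) > 0$ pointwise, and Stokes' theorem applied to any smooth $u\colon (\DD^2,\p \DD^2)\to (V,P)$ gives $\int_{\DD^2}u^*d\alpha = \int_{\SS^1}(u|_{\p\DD^2})^*\alpha$. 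Hence $P$ is weakly exact if and only if $\int_\gamma \alpha = 0$ for every loop~$\gamma$ in $P$ that is freely null-homotopic in $V$. The forward implication is then immediate: a positively transverse loop~$\gamma$ has $\int_\gamma \omega > 0$, so by Stokes it cannot be freely null-homotopic in $V$.

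For the converse I would argue by contrapositive. If $P$ is not weakly exact, some disk~$u$ gives a nonzero integral; after possibly pre-composing with an orientation-reversing self-diffeomorphism of $\DD^2$, we may assume $I \coloneqq \int_\gamma \omega > 0$, where $\gamma = u|_{\p \DD^2}$ is freely null-homotopic in $V$. The key step is the following \emph{homotopy claim}: any loop $\gamma$ in $P$ with $\int_\gamma \omega > 0$ is freely homotopic inside $P$ to a loop positively transverse to $\fF$. Granting this, the resulting positively transverse loop $\gamma_1$ is still freely null-homotopic in $V$, since the homotopy stays in $P \subset V$; this contradicts the hypothesis that no positively transverse loop is trivial in $\pi_1(V)$.

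To prove the homotopy claim, choose a vector field~$Y$ on $P$ with $\omega(Y) \equiv 1$, which exists since $\omega$ is nowhere zero. By Cartan's formula and the closedness of $\omega$, $\lie{Y}\omega = d(\omega(Y)) + \iota_Y\, d\omega = 0$, so the flow~$\phi_s$ of $Y$ satisfies $\phi_s^*\omega = \omega$. The function $t\mapsto I - \omega(\dot\gamma(t))$ has vanishing average on $\SS^1$, hence admits a smooth primitive $s\colon \SS^1 \to \RR$. Setting
\begin{equation*}
  \gamma_\tau(t) \coloneqq \phi_{\tau s(t)}\bigl(\gamma(t)\bigr)\,, \qquad \tau \in [0,1]\,,
\end{equation*}
defines a smooth free homotopy of loops in $P$ from $\gamma_0 = \gamma$ to $\gamma_1$, and the invariance of $\omega$ under $\phi_s$ yields
\begin{equation*}
  \omega\bigl(\dot\gamma_\tau(t)\bigr) = \tau\, s'(t) + \omega\bigl(\dot\gamma(t)\bigr)\,,
\end{equation*}
which at $\tau = 1$ equals $I > 0$ uniformly in $t$. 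Thus $\gamma_1$ is positively transverse to $\fF$, completing the claim and hence the lemma.

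The real content of the argument is the homotopy claim; everything else is Stokes' theorem or direct bookkeeping. I expect the main subtlety to be the construction of the homotopy: the naive move of approximating $[\omega]$ by a rational class and using the associated fibration of $P$ over $\SS^1$ would work but is unnecessary, since the explicit flow of $Y$ simultaneously straightens out every point on the loop while preserving $\omega$.
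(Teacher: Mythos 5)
Your proof is correct and takes essentially the same route as the paper: reducing weak exactness to the vanishing of $\int_\gamma\alpha$ over null-homotopic loops via Stokes, and establishing the key homotopy claim by flowing along a vector field $Y$ with $\alpha(Y)=1$ on $P$, whose flow preserves $\restricted{\alpha}{TP}$ by Cartan's formula. The only cosmetic differences are that you argue the converse by contrapositive where the paper argues directly, and you phrase the deformation as a free homotopy where the paper (slightly loosely) says ``isotopic''; the underlying construction of the time-reparametrization primitive and the straightened loop is identical.
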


\begin{remark}
  In dimension~$3$, the only type of closed pre-Lagrangian is an
  embedded torus whose characteristic foliation is linear.
  In this case, Lemma~\ref{lemma: exact pre-Lagrangians and positive
    loops} states that weak exactness is equivalent to the
  incompressibility of the torus, because the {transverse loops
    generate the full fundamental group of the torus.}
  Tight contact manifolds with positive Giroux torsion contain
  ``many'' incompressible pre-Lagrangians and are at the same time not
  even strongly fillable.
  Theorem~\ref{thm: main obstruction result} requires the existence of
  only one incompressible pre-Lagrangian, but this weaker condition
  only contradicts a more specific type of filling.
\end{remark}

\begin{proof}[Proof of Lemma~\ref{lemma: exact pre-Lagrangians and
    positive loops}]
  Let $\alpha$ be a contact form on $V$ such that
  $\restricted{d\alpha}{TP} = 0$.
  Assume that $P$ is weakly exact and that $\gamma\subset P$ is a
  smooth loop that is positively transverse to $\fF$.
  If $[\gamma]$ were trivial in $\pi_1(V)$, we could choose a (smooth)
  map $u\colon(\DD^2,\p \DD^2) \to (V,P)$ with
  $\restricted{u}{\p \DD^2} = \gamma$ so that by Stokes' theorem
  \begin{equation*}
    \int_u d\alpha = \int_\gamma \alpha \;.
  \end{equation*}
  Since $P$ is weakly exact, the left integral had to be $0$, while
  the right integral has to be strictly positive, because
  $\alpha(\gamma') > 0$ everywhere.
  Thus it follows that $\gamma$ cannot be contractible in $V$.
  For the opposite direction, assume now that every smooth loop in $P$
  that is positively transverse to the foliation is non-trivial in
  $\pi_1(V)$.
  To show that $P$ is weakly exact, we have to prove that for any
  smooth map $u\colon(\DD^2,\p \DD^2) \to (V,P)$ the integral
  $\int_{\DD^2} u^*d\alpha = \int_{\p \DD^2}u^*\alpha$ is $0$.
  We show below that every loop~$\gamma$ in $P$ with
  $\int_\gamma \alpha > 0$ can be homotoped to one that is positively
  transverse to $\fF$.
  {Our starting assumption then implies that none of the
    loops~$\gamma \subset P$ with $\int_\gamma \alpha \ne 0$ can be
    contractible in $V$, and since the boundary of a disk~$u$} clearly
  is contractible, {we obtain} $\int_{\p \DD^2}u^*\alpha = 0$ as
  we wanted to show.
  It remains to prove that every smooth loop
  $\gamma\colon \SS^1 \to P$ satisfying $\int_\gamma \alpha > 0$ is
  isotopic to a smooth loop $\tilde\gamma\colon \SS^1 \to P$ that is
  everywhere positively transverse to $\fF$.
  By assumption, $C = \int_{\gamma} \alpha$ is positive, and we define
  $g(t) = \alpha( \gamma'(t) )$ so that
  $\int_0^{2\pi} g(t)\,dt = \int_{\gamma} \alpha = C$.
  Set
  \begin{equation*}
    f(t) = \frac{Ct}{2\pi} - \int_0^t g(s) \, ds \;,
  \end{equation*}
  and observe that $f(0) = 0 = f(2\pi)$ and that
  $f'(t) = C/(2\pi) - g(t)$.
  Choose any vector field~$Y$ on $P$ such that $\alpha(Y) = 1$, and
  let $\Phi_t(x) = \Phi_t^Y(x)$ denote its time-$t$ flow.
  Note that $\Phi_t$ preserves $\restricted{\alpha}{TP}$.
  For every $\tau \in [0,1]$, the map
  \begin{equation*}
    t \mapsto \Phi_{\tau f(t)}( \gamma(t) )
  \end{equation*}
  provides a smooth loop $\SS^1 \to P$, and for $\tau = 1$ we obtain
  \begin{equation*}
    \alpha\left( \tfrac{d}{dt} \bigl[ \Phi_{f(t)}(\gamma(t)) \bigr] \right)
    = \alpha \Bigl( f'(t)\,Y + \Phi_{f(t)*} \gamma'(t) \Bigr) 
    = f'(t) + g(t) = \tfrac{C}{2\pi} > 0 \;.
  \end{equation*}
  This thus constructs the desired isotopy.
\end{proof}

The link between weakly exact pre-Lagrangians and weakly exact
Lagrangians is established by the following lemma whose proof is an
easy exercise using the tautological property and Stokes' theorem (see
also
\cite{MassotNiederkrugerNonconnectedContactomorphisms}*{Lemma~2.2}).

\begin{lemma}\label{lemma: exact pre-Lagrangian and exact Lagrangian
    in symplectization}
  A pre-Lagrangian $P\subset (V,\xi)$ is weakly exact if and only if
  its Lagrangian lifts are weakly exact in the
  symplectization~$(SV, d\lcan)$.
\end{lemma}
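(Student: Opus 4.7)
The plan is to set up a dictionary between disks $u\colon (\DD^2, \p\DD^2) \to (V,P)$ and disks $v\colon (\DD^2, \p\DD^2) \to (SV, L)$ by using the section $\alpha\colon V \to SV$ associated to the chosen contact form, where $L = \alpha(P)$. The key ingredient is the tautological property $\alpha^* \lcan = \alpha$ (which holds because $\pi_V \circ \alpha = \id_V$); combined with Stokes' theorem it will directly identify the two relevant symplectic area integrals.

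For the direction ``$L$ weakly exact $\Rightarrow$ $P$ weakly exact'', I would take an arbitrary disk $u\colon (\DD^2, \p\DD^2) \to (V, P)$ and lift it to $v \coloneqq \alpha \circ u$. Then $v$ has boundary in $\alpha(P) = L$, and by the tautological property $v^* d\lcan = u^* d(\alpha^* \lcan) = u^* d\alpha$; integrating over $\DD^2$ and using weak exactness of $L$ then forces $\int_{\DD^2} u^* d\alpha = 0$.

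For the converse, start with a disk $v\colon (\DD^2, \p\DD^2) \to (SV, L)$ and project to $u \coloneqq \pi_V \circ v$, which has boundary in $P$. Since $\pi_V$ restricts to a diffeomorphism from $L$ to $P$, the condition $v(\p\DD^2) \subset L$ forces $v|_{\p\DD^2} = \alpha \circ u|_{\p\DD^2}$. Stokes' theorem and the tautological property then yield
\[
\int_{\DD^2} v^* d\lcan = \int_{\p\DD^2} v^* \lcan = \int_{\p\DD^2} u^* \alpha = \int_{\DD^2} u^* d\alpha,
\]
and weak exactness of $P$ produces the desired vanishing.

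There is no substantive obstacle in this argument: everything reduces to the tautological property and Stokes' theorem. The only point that requires a moment's thought is the boundary identification $v|_{\p\DD^2} = \alpha \circ u|_{\p\DD^2}$ in the second direction, which follows from the fact that each Lagrangian lift projects diffeomorphically onto $P$. Since the construction applies to every contact form $\alpha$ with $\restricted{d\alpha}{TP} = 0$, the equivalence automatically handles all Lagrangian lifts of $P$ at once.
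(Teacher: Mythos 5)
Your argument is correct and is exactly the route the paper indicates: the paper declares the proof to be ``an easy exercise using the tautological property and Stokes' theorem,'' and your lift/projection dictionary via the section $\alpha\colon V\to SV$ together with $\alpha^*\lcan = \alpha$ and Stokes is precisely that exercise, carried out in both directions and correctly noting that running it over all admissible $\alpha$ covers all Lagrangian lifts.
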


\vspace{0.3cm}

Recall that a Lagrangian~$L$ is called \defin{displaceable} if there
is a compactly-supported Hamiltonian isotopy $\phi_t$ such that
$\phi_1(L) \cap L = \emptyset$.
{Accordingly}, a pre-Lagrangian~$P$ is called
\defin{displaceable} if there is a contact isotopy $\phi_t$ such that
$\phi_1(P) \cap P = \emptyset$.
The proof of Theorem~\ref{thm: main obstruction result} uses 
the following result by Gromov as an essential ingredient:

\begin{theorem}\label{thm: Gromov}
  Let $(W,d\lambda)$ be an exact symplectic manifold, convex at
  infinity.
  \begin{itemize}
  \item [(a)] There are no closed, weakly exact Lagrangians in
    $(W\times\CC, d\lambda\oplus dz\wedge d\bar z)$, see
    \cite{Gromov_HolCurves}*{Section~2.3 B$_3$}.
  \item [(b)] There are no closed, displaceable weakly exact Lagrangians in
    $(W,d\lambda)$, see \cite{Gromov_HolCurves}*{Section~2.3~B$_3'$}.
  \end{itemize}
\end{theorem}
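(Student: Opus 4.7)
The plan is to prove (b) first, then deduce (a) as a displacement corollary.

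First I would reduce (a) to (b): any compact $L \subset W\times\CC$ has bounded projection to $\CC$, so $L$ is displaceable by a compactly supported Hamiltonian that translates the $\CC$-factor by a sufficiently long vector. Since $(W\times\CC, d\lambda \oplus dz\wedge d\bar z)$ remains exact and convex at infinity, (b) applied to this enlarged manifold yields (a).

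For (b), I would argue by contradiction: suppose $L \subset (W,d\lambda)$ is closed, weakly exact, and displaced by a compactly supported Hamiltonian $\phi_1$. Choose a $d\lambda$-tame almost complex structure $J$. The decisive consequence of weak exactness is that every $J$-holomorphic disk $u\colon(\DD^2,\p\DD^2)\to (W,L)$ satisfies $0 = \int u^*d\lambda = \tfrac12 \int \abs{du}_J^2$, so $u$ is constant; exactness of $d\lambda$ similarly excludes nonconstant holomorphic spheres. This absence of disk and sphere bubbling, together with convexity at infinity (which confines curves to a compact region), allows one to define the self-Floer cohomology $HF^*(L,L)$. Two incompatible computations then provide the contradiction: a PSS-type isomorphism (or Oh's spectral sequence) identifies $HF^*(L,L)$ with the singular cohomology $H^*(L;\ZZ/2)$, which is nonzero because $L$ is closed and nonempty; whereas Hamiltonian invariance gives $HF^*(L,L) \cong HF^*(L,\phi_1(L))$, and the right-hand side vanishes because its underlying chain complex is generated by points of $L\cap \phi_1(L) = \emptyset$.

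The main obstacle is carrying out the Floer construction carefully enough in the weakly exact setting to trust the isomorphism $HF^*(L,L) \cong H^*(L;\ZZ/2)$: one needs generic $J$ for transversality, convexity at infinity to prevent escape, and a verification that the relevant spectral sequence collapses at $E_1$ when only weak exactness (rather than full exactness) is assumed. An alternative closer to Gromov's 1985 argument avoids Floer theory altogether: fixing a path of Hamiltonian diffeomorphisms from $\id$ to $\phi_1$, one assembles a parametrized moduli space of $J$-holomorphic strips with boundary on the pair $(L,\phi_s(L))$ and applies Gromov compactness to the cobordism between the generic $s$-slice (where no strips exist) and a nearby slice, extracting in the limit a nonconstant holomorphic disk with boundary on $L$, which weak exactness forbids.
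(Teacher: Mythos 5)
The paper does not actually prove this statement: both parts are black-box citations to Gromov's 1985 paper (the \texttt{\char`\\cite} references \emph{are} the ``proof''), so there is no internal argument to compare against. Your blind sketch is nonetheless a reasonable account of how the result is established. The reduction of (a) to (b) is correct and clean: a compact $L \subset W\times\CC$ has bounded projection to the $\CC$-factor, so a translation of $\CC$ by a long vector, cut off outside a large ball, is a compactly supported Hamiltonian isotopy displacing $L$; and $W\times\CC$ is again exact and convex at infinity, so (b) applies to it. (Gromov establishes the two statements by closely related holomorphic-curve arguments rather than via this reduction, but the reduction is the standard way to see the implication.) For (b), your Floer-theoretic route is the usual modern argument and is correct in the weakly exact setting; two small points: the energy identity $\int u^*d\lambda = \tfrac12\int\abs{du}_J^2$ requires $J$ to be $d\lambda$-\emph{compatible}, not merely tame (for tame $J$ one only gets a coercivity inequality, which still forces $u$ to be constant), and your worry about the spectral sequence collapsing is unnecessary, since weak exactness kills \emph{every} nonconstant $J$-holomorphic disk with boundary on $L$, so the relevant complex is already the Morse complex with no correction terms. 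Your alternative strip-parametrization argument is closer in spirit to Gromov's original 2.3.B$_3'$ and avoids the Floer machinery at the cost of a direct compactness-and-cobordism argument; both routes are valid, and neither is what the paper does (which is to cite the result).
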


With this we are ready to prove Theorem~\ref{thm: main obstruction
  result} which simply translates the statements above to certain
pre-Lagrangians to give obstructions to (subcritical) Weinstein
fillability.
Let $(V,\xi)$ be a regular level set~$f^{-1}(c)$ of a Weinstein
manifold~$(W,\omega, X, f)$ such that all critical values of $f$ are
smaller than $c$.
Using that the flow~$\Phi_t^X$ of the Liouville field is by assumption
complete, we construct a symplectic embedding
$j\colon SV \hookrightarrow W$ of the symplectization of $V$ in $W$.
The image~$j(SV)$ is dense in $W$, and its complement consists only of
the Lagrangian skeleton of $W$, that is, $W\setminus j(SV)$ is the
union of the stable manifolds of the critical points of $X$.
These are all of dimension~$n \le \frac{1}{2}\,\dim W$ and of
dimension~$n < \frac{1}{2}\,\dim W$ if $W$ is subcritical.
Also the image of a closed Lagrangian $L\subset SV$ is clearly a
closed Lagrangian $j(L)$ in $W$.
By Lemma~\ref{lemma: exact pre-Lagrangian and exact Lagrangian in
  symplectization}, a weakly exact pre-Lagrangian in $V$
gives rise to a weakly exact Lagrangian in the symplectization~$SV$.
In the context of Weinstein fillings, we have the following stronger
result (see also the proof of
\cite{BiranKhanevsky}*{Proposition~5.1}):

\begin{lemma}\label{lemma: exact Lagrangian in symplectization and in
    Stein}
  Let $(W,\omega, X, h)$ be the Weinstein filling of a contact
  manifold $(V,\xi)$, and let $P \subset (V,\xi)$ be a pre-Lagrangian
  with Lagrangian lift $L\subset (SV,d\lambda)$.
  Assume that either $\dim W\ge 6$, or that $W$ is subcritical and
  $\dim W = 4$.
  The pre-Lagrangian~$P$ is weakly exact if and only if
  $j(L)\subset W$ is weakly exact in $W$.
\end{lemma}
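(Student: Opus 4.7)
The plan is to exploit the fact that $j\colon SV\hookrightarrow W$ is a symplectic embedding whose image is $W$ minus the Lagrangian skeleton $\operatorname{Sk}(W)$ of $X$, combined with the previous Lemma~\ref{lemma: exact pre-Lagrangian and exact Lagrangian in symplectization} that equates weak exactness of $P$ with weak exactness of its lift $L\subset (SV,d\lcan)$. The easy direction is immediate: if $j(L)$ is weakly exact in $W$ and $u\colon(\DD^2,\p\DD^2)\to(SV,L)$ is any disc, then $j\circ u$ is a disc in $(W,j(L))$ of the same symplectic area (because $j^*\omega = d\lcan$), so $\int u^*d\lcan = 0$, showing that $L$ is weakly exact in $SV$ and therefore that $P$ is weakly exact.

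For the converse, I start with a disc $u\colon(\DD^2,\p\DD^2)\to(W,j(L))$ and aim at showing $\int u^*\omega = 0$. The boundary $u(\p\DD^2)$ already lies in $j(L)\subset V\subset j(SV) = W\setminus \operatorname{Sk}(W)$, and since $\omega$ is globally exact on the Weinstein manifold~$W$, the integral $\int u^*\omega$ depends only on $u|_{\p\DD^2}$; in particular, any homotopy of $u$ rel boundary preserves it. Thus it is enough to homotope $u$ rel boundary to a disc $u'$ whose image entirely avoids the skeleton. Such a $u'$ factors uniquely as $u' = j\circ \tilde u$ for some $\tilde u\colon(\DD^2,\p\DD^2)\to(SV,L)$, and the weak exactness of $L$ (which follows from the hypothesis together with Lemma~\ref{lemma: exact pre-Lagrangian and exact Lagrangian in symplectization}) then yields $\int u^*\omega = \int (u')^*\omega = \int\tilde u^* d\lcan = 0$.

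The crux is therefore the rel-boundary perturbation step that removes $\operatorname{Sk}(W)$ from the image of $u$, and this is where the dimension hypothesis enters. In the critical case $\dim \operatorname{Sk}(W) \le \tfrac{1}{2}\dim W$, and in the subcritical case $\dim \operatorname{Sk}(W) < \tfrac{1}{2}\dim W$; the assumption that either $\dim W\ge 6$ or $W$ is subcritical with $\dim W = 4$ is precisely what forces the codimension of $\operatorname{Sk}(W)$ in $W$ to be at least~$3$. A standard transversality argument then allows us to perturb $u$ (keeping its boundary fixed, which is automatic since $j(L)\subset V$ is disjoint from the skeleton) to be transverse to $\operatorname{Sk}(W)$, and transversality of a $2$-disc against a codimension-$\ge 3$ subset forces the intersection to be empty. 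This codimension count is essentially the only nontrivial step, and it is also the reason for the restrictive hypothesis on $\dim W$.
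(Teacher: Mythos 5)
Your proof is correct and follows essentially the same route as the paper: the easy direction is by viewing a disc in $(SV,L)$ as a disc in $(W,j(L))$, and the hard direction is a transversality perturbation pushing a disc off the Lagrangian skeleton, using that the dimension hypothesis forces the skeleton to have codimension at least three. The only difference is that you spell out why the perturbation does not change the symplectic area (exactness of $\omega$, homotopy rel boundary), a point the paper's terse proof leaves implicit.
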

\begin{proof}
  Since every map $v\colon (\DD^2, \p \DD^2) \to (SV,L)$ can be viewed
  as a map into $W$, the weak exactness of $j(L)\subset W$ implies
  directly the one of $L \subset SV$.
  By Lemma~\ref{lemma: exact pre-Lagrangian and exact Lagrangian in
    symplectization} it then follows that $P$ is also weakly exact.
  Assume now that $L\subset SV$ is a weakly exact Lagrangian and let
  $u\colon (\DD^2,\p \DD^2) \to \bigl(W,j(L)\bigr)$ be a smooth map.
  Since the skeleton of $W$ is only $n$-dimensional, and since
  $\dim W = 2n \ge 6$, we see that $n+2 < 2n$ so that the image of $u$
  will generically not intersect the skeleton of $W$.
  After a homotopy, we may assume that the image of $u$ lies in the
  complement of the Lagrangian skeleton and thus in $j(SV)$, and we
  may apply the weak exactness assumption of $L$ in $SV$.
  If $\dim W=4$, but if $W$ is subcritical, then we arrive to the same
  conclusion because the skeleton of $W$ is only $1$-dimensional.
\end{proof}

\begin{theorem_obstruction_subcrit}
  A closed contact manifold containing a weakly exact
  pre-Lagrangian~$P$ is not subcritically Weinstein fillable.
  If the dimension of the contact manifold is at least $5$ and if $P$
  is displaceable then it follows that the contact manifold is not
  even Weinstein fillable.
\end{theorem_obstruction_subcrit}
\begin{proof}
  Combining Theorem~\ref{thm: Cieliebak} by Cieliebak with
  Theorem~\ref{thm: Gromov}.(a) by Gromov, we see that subcritical
  Weinstein manifolds do not contain any weakly exact Lagrangians.
  As a consequence of Lemma~\ref{lemma: exact Lagrangian in
    symplectization and in Stein}, it then follows that contact
  manifolds that are subcritically fillable may not contain weakly
  exact pre-Lagrangians. This proves the first statement of the
  theorem.
  According to
  \cite{MassotNiederkrugerNonconnectedContactomorphisms}*{Lemma~2.4} a
  contact isotopy that displaces a pre-Lagrangian lifts to a
  Hamiltonian isotopy with compact support in the symplectization that
  displaces a Lagrangian lift of the pre-Lagrangian.
  Using Theorem~\ref{thm: Gromov}.(b) combined with Lemma~\ref{lemma:
    exact Lagrangian in symplectization and in Stein} we then see that
  a Weinstein fillable contact manifold of dimension at least $5$ may
  not contain any displaceable weakly exact pre-Lagrangians.
\end{proof}

\begin{example}
  Let $(W,\omega)$ be a closed manifold with an integral symplectic
  form so that we can find a principal circle bundle
  $\pi\colon V \to W$ with Euler class~$[\omega]$.
  The pre-quantization~$(V,\alpha)$ is a contact manifold where we
  choose a contact form~$\alpha$ such that $d\alpha = \pi^*\omega$ and
  $\alpha(Z) = 1$ for $Z$ the infinitesimal generator of the circle
  action {(this implies that $\alpha$ is invariant under the
    circle action and $Z$ is its associated Reeb vector field)}.
  {Any Lagrangian~$L$ in $W$ is covered by a
    pre-Lagrangian~$P = \pi^{-1}(L)$ in $V$}, because
  {$\restricted{\alpha}{TP}$} is not singular and
  {$\restricted{d\alpha}{TP} = \pi^*(\restricted{\omega}{TL}) =
    0$}, see \cite{EliashbergHoferSalamon}.
  Furthermore {if $L$ is weakly exact so is} $P$, because if
  $u\colon (\DD^2,\p \DD^2) \to (V,P)$ is any smooth map, then we
  obtain with a simple calculation
  \begin{equation*}
    \int_{\DD^2} u^*d\alpha = \int_{\DD^2} u^*\pi^*\omega =
    \int_{\DD^2} (\pi\circ u)^*\omega = 0 \;,
  \end{equation*}
  using that $\pi\circ u$ is a smooth map in $W$ with boundary in $L$.
  A pre-quantization over a symplectic manifold containing a weakly exact
  Lagrangian is thus not subcritically Weinstein fillable.
\end{example}

\begin{lemma}\label{lemma: legendrian in page weakly exact in bourgeois}
  Let $(V,\xi)$ be a contact manifold with compatible open book
  $(K,\vartheta)$, and equip $V\times \TT^2$ with the Bourgeois
  structure corresponding to this open book.
  {We have the following two constructions of pre-Lagrangians:}
  \begin{itemize}
  \item [(a)] If there is a closed Legendrian $L\subset V$ contained
    in one page of $(K, \vartheta)$ then
    $L\times \TT^2\subset V\times \TT^2$ is a weakly exact
    pre-Lagrangian.
  \item [(b)] {Any closed pre-Lagrangian~$P\subset K$ in the
      binding~$(K,\xi\cap TK)$ yields a pre-Lagrangian~$P\times \TT^2$
      in the Bourgeois manifold.
      This pre-Lagrangian is weakly exact if and only if every loop in
      $P$ that is positively transverse to the characteristic
      foliation is non-contractible in $V$.}
  \end{itemize}
\end{lemma}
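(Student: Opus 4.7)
My plan is to treat the two parts separately, verifying the pre-Lagrangian condition by exhibiting an explicit contact form in each case, and then deducing weak exactness from the characterisation in Lemma~\ref{lemma: exact pre-Lagrangians and positive loops} together with the product structure of $\pi_1(V\times \TT^2)$.

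For part~(a), since $L$ lies entirely inside one page $\vartheta^{-1}(\vartheta_0)$, the function $|f|$ is bounded away from $0$ on a neighbourhood of $L$, so the rescaled Bourgeois form $\beta = \alpha_B/|f| = \alpha_V/|f| + \cos\vartheta\, d\varphi_1 - \sin\vartheta\, d\varphi_2$ is a genuine contact form there. The first key observation is that along $L$, the term $\alpha_V/|f|$ vanishes (because $L$ is Legendrian so $\alpha_V|_L=0$) and $\vartheta\equiv \vartheta_0$ is constant, so
\begin{equation*}
  \beta\bigr|_{L\times \TT^2} \;=\; \cos\vartheta_0\, d\varphi_1 - \sin\vartheta_0\, d\varphi_2 \;,
\end{equation*}
which is manifestly closed; this shows that $L\times \TT^2$ is pre-Lagrangian. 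For weak exactness, any map $u\colon (\DD^2,\partial \DD^2)\to (V\times \TT^2, L\times \TT^2)$ has a boundary loop whose composition with the projection to $\TT^2$ is contractible in $\TT^2$, so
$\int_{\partial \DD^2}u^*\beta$ becomes a linear combination of vanishing winding numbers. The main subtlety here is that $\beta$ is only defined off the binding; I would argue that weak exactness only depends on $\beta\bigr|_{L\times \TT^2}$, and one may either extend $\beta$ across $V\times \TT^2$ to a true contact form or appeal directly to the topological reformulation of weak exactness.

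For part~(b), the key point is that along $K$ we have $f\equiv 0$, hence $df_x|_{TK}=df_y|_{TK}=0$. A quick computation then gives $\alpha_B\bigr|_{K\times \TT^2}=\alpha_V|_K$ and $d\alpha_B\bigr|_{T(K\times \TT^2)}=d(\alpha_V|_K)$. Using that $P\subset K$ is pre-Lagrangian in $(K,\xi\cap TK)$, pick $h>0$ on $K$ with $d(h\cdot\alpha_V|_K)\bigr|_{TP}=0$, extend $h$ to a positive function $g$ on $V\times \TT^2$ that is constant in the $\TT^2$-directions and whose restriction to $K$ equals $h$. Then $g\,\alpha_B$ is still a contact form for the Bourgeois structure and
\begin{equation*}
  d(g\,\alpha_B)\bigr|_{T(P\times \TT^2)} \;=\; dg\wedge \alpha_V|_P + g\cdot d\alpha_V|_{TP} \;=\; d(h\,\alpha_V|_K)\bigr|_{TP} \;=\; 0\;,
\end{equation*}
showing that $P\times \TT^2$ is pre-Lagrangian.

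Finally, with this contact form in hand, the characteristic foliation on $P\times \TT^2$ splits as $(\xi\cap TP)\oplus T\TT^2$, so a loop $\gamma=(\gamma_P,\gamma_T)$ in $P\times \TT^2$ is positively transverse to it precisely when $\gamma_P$ is positively transverse to the characteristic foliation of $P$ inside $K$. Since $\pi_1(V\times \TT^2)\cong \pi_1(V)\times \pi_1(\TT^2)$, such a loop is contractible in $V\times \TT^2$ if and only if $\gamma_P$ is contractible in $V$ and $\gamma_T$ is contractible in $\TT^2$. Plugging this into Lemma~\ref{lemma: exact pre-Lagrangians and positive loops} — using constant $\gamma_T$ for one direction and noting that the $V$-projection controls contractibility for the other — gives the claimed equivalence. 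I expect the only mildly delicate step to be the verification that $g\alpha_B$ remains contact globally and that the rescaled form in~(a) can be used to compute weak exactness despite its singular behaviour at the binding; both issues are purely local and resolved by checking that weak exactness depends only on the restriction of the contact form to the pre-Lagrangian.
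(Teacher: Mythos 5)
Your proposal is correct in spirit and takes essentially the same route as the paper in both parts (exhibit a contact form whose restriction to the candidate submanifold is closed, then feed the positively transverse loops into Lemma~\ref{lemma: exact pre-Lagrangians and positive loops}). The one place you need to tighten up is part~(a): the form $\beta = \alpha_B/\abs{f}$ that you use to verify the pre-Lagrangian condition is \emph{not} a global contact form on $V\times\TT^2$ --- it blows up along $K\times\TT^2$ --- and the definition of pre-Lagrangian, as well as the ``$\int_{\DD^2}u^*d\alpha=0$'' formulation of weak exactness, require a genuine contact form defined on the whole manifold. You flag this yourself, but ``extend $\beta$ to a true contact form'' cannot be taken literally, and ``appeal to the topological reformulation'' does not help either, because Lemma~\ref{lemma: exact pre-Lagrangians and positive loops} already presupposes that the submanifold is pre-Lagrangian. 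The clean fix, which is exactly what the paper does, is to extend only the conformal factor: since $L$ lies in the interior of one page, $\abs{f}\vert_L$ (or even one of $f_x\vert_L$, $f_y\vert_L$) is nowhere vanishing on $L$, so it extends to a globally nowhere-vanishing function $\hat\rho$ on $V\times\TT^2$; then $\alpha_B/\hat\rho$ is a global contact form whose restriction to $T(L\times\TT^2)$ agrees with $\beta\vert_{T(L\times\TT^2)} = \cos\vartheta_0\,d\varphi_1 - \sin\vartheta_0\,d\varphi_2$, and your computation goes through verbatim.

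For part~(b) your argument is actually more explicit than the paper's. The paper only notes that $f$ vanishes along $K$ and that $P\times\TT^2$ has the right dimension, but does not explain why a contact form with closed restriction exists; your conformal rescaling by a positive function $g$ extending a suitable $h$ on $K$ (constant in the $\TT^2$-direction) is exactly the missing step, and the computation
\begin{equation*}
  d(g\,\alpha_B)\bigr\vert_{T(P\times\TT^2)} = d\bigl(h\,\alpha_V\vert_K\bigr)\bigr\vert_{TP} = 0
\end{equation*}
is correct. The subsequent identification of the foliation on $P\times\TT^2$ with $(\xi\cap TP)\oplus T\TT^2$ and the $\pi_1$ argument via the product decomposition both match the paper.
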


Part~(b) also applies directly to more general Bourgeois-Gironella
structures on $V\times \Sigma$.

\begin{proof}
  {\textbf{(a)}} We will first show that $L\times \TT^2$ is a
  pre-Lagrangian.
  Let $\alpha_V$ be a contact form for $\xi$ that is supported by the
  open book $(K, \vartheta)$.
  The Bourgeois structure on $V\times \TT^2$ is given as the kernel of
  the form
  $\alpha = \alpha_V + f_x \, d\varphi_1 - f_y\, d\varphi_2$
  where $(\alpha_V, f_x + i f_y)$ is a representation of
  $(K, \vartheta)$ and $\xi$.
  If $L$ is Legendrian, then $\restricted{\alpha_V}{TL} = 0$.
  Since $L$ is contained in the interior of one of the pages, either
  $f_x$ or $f_y$ do not vanish anywhere on $L$.
  Suppose it is $f_x$, then we extend $\restricted{f_x}{L}$ to a
  nowhere vanishing function~$\hat f_x$ on $V \times \TT^2$ that we
  use to rescale $\alpha$.
  For this new contact form, we have
  $\restricted{\alpha}{T(L\times\TT^2)} = d\varphi_1 - c\, d\varphi_2$
  where $c$ is a \emph{constant}
  $\frac{f_y}{f_x} = \tan \restricted{\vartheta}{L}$.
  This implies that $L\times \TT^2$ is pre-Lagrangian.
  To see that $L\times \TT^2$ is weakly exact choose any loop~$\gamma$
  that is positively transverse to the foliation of $L\times \TT^2$
  given by $\ker\alpha$.
  According to Lemma~\ref{lemma: exact pre-Lagrangians and positive
    loops}, $L\times \TT^2$ is weakly exact if $\gamma$ is not
  contractible in $V\times \TT^2$, i.e. non-trivial in
  $\pi_1(V\times \TT^2) = \pi_1(V)\times\pi_1(\TT^2)$.
  Since the characteristic foliation on $L\times \TT^2$ is the lift of
  the linear foliation on $\TT^2$, it follows that $\gamma$ projects
  to a non-trivial loop in $\pi_1(\TT^2)$.
\textbf{(b)} Let $P$ be a pre-Lagrangian in the binding~$K$.
   Notice that both functions $f_x$ and $f_y$ vanish along $K \times \TT^2$ so
   in particular along $P \times \TT^2$. Notice also that $P \times \TT^2$ is
   of the correct dimension. It follows therefore that $P \times
   \TT^2$ is pre-Lagrangian.
   The statement about weak exactness follows immediately from 
  Lemma~\ref{lemma: exact pre-Lagrangians and positive loops}, and that $\pi_1(V\times \TT^2)$ decomposes as a product.

  \end{proof}

Corollary~\ref{cor: page contains Legendrian then Bourgeois not
  subcritical} from the introduction now follows immediately from
Theorem~\ref{thm: main obstruction result} and Lemma~\ref{lemma:
  legendrian in page weakly exact in bourgeois}(a).
As an application of Theorem~\ref{thm: main obstruction result} we are
able to show that even though some Bourgeois contact structures are
subcritically fillable, most are not.
In particular, we see that changing the open book for a given contact
structure may destroy the subcritical fillability of the resulting
Bourgeois structure.

\appendix

\section{Contact open books and ideal Liouville
  domains}\label{appendix}

The aim of this appendix is to give a short overview on \emph{ideal
  Liouville domains} introduced by Giroux \cite{GirouxIdealDomains}
and illustrate their use by working out a few classical examples of
contact open books.
Even though the relation between contact open books and abstract open
books is by now well-known and has been discussed in several sources
(e.g. \cites{Giroux_ICM, EtnyreLectOpenBooks, KoertLectOpenBooks}), to
the best of our knowledge there is no unified treatment in the
literature that does not have some missing details.
One of the key sticky points has to do with smoothing of corners and
modifying monodromy maps correctly near the boundary of the page.
These difficulties are encapsulated in the ideal Liouville domain
machinery, and are dealt with by Giroux in his abstract framework
\cite{GirouxIdealDomains}.
An abstract Liouville open book consists of an ideal Liouville domain
together with a monodromy map (see below).
The main result reads as follows.

\begin{theorem}[Giroux]\label{theorem: equivalence abstract and
    embedded open books}
  There is a natural bijection between homotopy classes of contact
  structures supported by an open book and homotopy classes of
  abstract Liouville open books.
\end{theorem}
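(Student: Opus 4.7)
The plan is to construct natural maps in both directions and show they descend to mutually inverse bijections on homotopy classes. One direction sends a contact open book $(V,\xi,K,\vartheta)$ to an abstract Liouville open book by extracting an ideal Liouville page and a monodromy from a representation $(\alpha_V, f)$. The other direction builds a contact structure on a manifold $V$ with open book $(K,\vartheta)$ from the mapping torus of a symplectomorphism of an ideal Liouville domain. Throughout, the task is to verify that the many choices involved (contact form, auxiliary function, spinning vector field, cap model near the binding) form contractible spaces, so the induced maps are well-defined on homotopy classes.

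For the forward direction, given $(V,\xi)$ with compatible open book $(K,\vartheta)$, I would invoke Proposition~\ref{prop:contact_open_book_is_Liouville_ob} to select a representation $(\alpha_V, f)$ with $\vartheta = f/\abs{f}$ and such that $d(\alpha_V/\abs{f})$ restricts to an ideal Liouville structure on each closed page. A page $F = \overline{\vartheta^{-1}(\vartheta_0)}$ then provides the ideal Liouville domain. The monodromy is recovered as the time-one flow of the unique spinning vector field $Y$ of Lemma~\ref{lemma: exists vector field to recover monodromy}, characterized by $d\vartheta(Y) = 2\pi$ and $\iota_Y d(\alpha_V/\abs{f}) = 0$; this flow preserves the ideal Liouville structure on pages because $\lie{Y}d(\alpha_V/\abs{f})$ vanishes on pages, producing an abstract Liouville open book.

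For the backward direction, starting from an ideal Liouville domain $(F, d\lambda)$ and a symplectomorphism $\Psi$ preserving the Liouville structure in the required sense, I would form the mapping torus $F_\Psi = (F \times \RR)/((x,t) \sim (\Psi(x), t - 1))$ equipped with a one-form of the form $\lambda + dt$ interpolated appropriately toward the ideal boundary. I would then glue in a tubular neighborhood $K \times \DD^2$ of the would-be binding with contact form modeled on $\alpha_K + \rho^2\, d\varphi$ in polar coordinates, matching the asymptotics of $\lambda/\abs{f}$ near the ideal boundary, and verify via an explicit computation (mirroring Lemma~\ref{lemma: volume form from open book}) that the resulting form is a positive contact form on $V$ with open book $(K,\vartheta)$ and representation $(\alpha_V, f)$.

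The main obstacle is proving that these two constructions descend to inverse maps on homotopy classes rather than merely pointed sets. One direction is almost tautological: starting with an abstract Liouville open book, the output of the backward construction carries a representation that reproduces the original page and monodromy under the forward map. The reverse composition requires two ingredients: first, that the space of representations $(\alpha_V, f)$ of a fixed contact open book is non-empty and contractible, so that the monodromy map is independent of choices up to smooth isotopy through symplectomorphisms preserving the ideal Liouville structure; second, a Gray-type stability argument showing that homotopies of abstract Liouville data induce isotopies of contact structures adapted to the \emph{same} open book. The convexity arguments of Lemma~\ref{lemma: contact form inverse monodromy} give a template for producing paths of contact forms within the space of supported representations, and this is precisely the feature for which Giroux's ideal Liouville formalism was designed: by packaging the boundary behavior into the conformal class of $d\lambda$ rather than requiring corner smoothings, the interpolations needed to verify contractibility can be carried out linearly in the $1$-form.
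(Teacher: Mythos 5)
The paper does not prove this theorem: it is attributed to Giroux and cited to \cite{GirouxIdealDomains}, and the appendix then supplies only the supporting formalism (Proposition~\ref{prop:contact_open_book_is_Liouville_ob}, Lemma~\ref{lem:Liouville_OB_is_contact_OB}, Lemma~\ref{lemma: exists vector field to recover monodromy}) together with a brief sketch of the bijection. Your outline agrees with that sketch in the forward direction --- choose a representation, extract the ideal Liouville page, recover the monodromy from the spinning vector field of Lemma~\ref{lemma: exists vector field to recover monodromy} --- and you correctly identify that well-definedness on homotopy classes rests on the contractibility of the spaces of representations and spinning fields, plus a Gray-type stability argument (for which the paper cites \cite{GirouxIdealDomains}*{Proposition~21}).

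Where you depart from the paper is the backward direction. You propose to form the mapping torus and then \emph{glue in} a tubular neighborhood $K\times\DD^2$ with an explicit cap model $\alpha_K+\rho^2\,d\varphi$, matching asymptotics across the collar --- this is the pre-Giroux construction. The paper instead records Giroux's method of taking the mapping torus of $\phi\in\Diff_\p(F,\omega)$ and then \emph{blowing down} its boundary $K\times\SS^1$ to obtain the binding. The difference is not cosmetic: the whole payoff of the ideal Liouville formalism is that once the Liouville primitive $\lambda$ is allowed to be singular along $\p F$, the binding $1$-form $\beta=\abs{f}\,\lambda$ is automatically smooth across $K$, and the contact condition near the binding reduces to the single computation of Lemma~\ref{lemma: volume form from open book} rather than an interpolation between two contact forms on a collar. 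The cap-model approach you describe re-introduces exactly the matching and smoothing difficulties the blowdown is designed to eliminate, and would demand additional convexity and smoothing arguments to be carried through. Both constructions produce the same contact manifold up to isotopy, so your plan is not wrong, but it does not exploit the formalism in which the theorem is stated, and as written it is closer to the older treatments that the appendix explicitly sets out to improve upon.
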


{In the next three sections,} we explain the formalism
introduced by Giroux and illustrate it by applying it to the two most
elementary open books on the standard contact sphere.
We also verify that an open book with trivial monodromy is explicitly
fillable by the Weinstein manifold obtained by stabilizing the page
(in the sense of Weinstein domains, see Section~\ref{sec: explicit
  fillings}).
%
%

\subsection{Contact open books}\label{appendix: contact open book}

\begin{definition} \label{def: smooth open book}
  Let $V$ be a closed manifold.
  An \defin{open book} on $V$ is a pair~$(K, \vartheta)$ where:
  \begin{itemize}
  \item $K \subset V$ is a non-empty codimension-$2$ submanifold with
    trivial normal bundle;
  \item $\vartheta\colon V \setminus K \to \SS^1$ is a fibration that
    agrees in a tubular neighborhood $K \times \DD^2$ of
    $K = K \times \{0\}$ with a normal angular coordinate.
  \end{itemize}
  We call $K$ the \defin{binding} of the open book, and we call the
  closure $F_\varphi := \vartheta^{-1}\bigl(e^{i\varphi}\bigr) \cup K$
  of every fiber a \defin{page} of the open book.
\end{definition}

Note that the pages are smooth {compact} submanifolds with
boundary~$K$.

\begin{remark}\label{rmk: top open book specified by complex function}
  It is easy to see that an open book can equivalently be specified by
  a smooth function $h\colon V \to \CC$ for which $0$ is a regular
  value such that $K_h \coloneqq h^{-1}(0)$ is not empty, and such
  that
  \begin{equation*}
    \vartheta_h\colon V \setminus K_h \to \SS^1, \,
    p\mapsto \frac{h(p)}{\abs{h(p)}}
  \end{equation*}
  is a submersion.
  The set of smooth functions defining a given open book is a
  non-empty convex subset of $C^\infty(V,\CC)$.
\end{remark}

{ If $V$ is an oriented manifold, the coorientations specified
  by $\vartheta$ orient both the pages and the binding.
From a practical viewpoint it is helpful to formulate these
orientations using volume forms.
\begin{itemize}
\item A vector~$R\in T_p V$ at a point $p\in V\setminus K$ is
  positively transverse to a page if and only if $d\vartheta(R) > 0$.
  Given a positive volume form~$\Omega_V$ on $V$, it follows that
  $\iota_R \Omega_V$ determines the positive orientation for the page.
  A volume form~$\Omega_F$ on a page~$F_\varphi$ is thus positive if
  and only if $d\vartheta \wedge \Omega_F$ is positive on
  $\restricted{TV}{\Int F_\varphi}$.
\item Identify the neighborhood of $K$ with $K\times \DD^2$ such that
  the angular coordinate~$\varphi$ agrees with $\vartheta$ and such
  that the disk has the canonical orientation with
  coordinates~$(x,y)\in \DD^2$.
  Then it follows that for a positive volume form~$\Omega_V$ on $V$
  the restriction of $\iota_{\partial_y} \iota_{\partial_x}\Omega_V$
  is a positive volume form on the binding.
  Conversely, a volume form $\Omega_K$ on $K$ is positive if and only
  if
  $dx\wedge dy \wedge \Omega_K = r\,dr\wedge d\varphi\wedge \Omega_K$
  is a positive volume element on $\restricted{TV}{K}$.
\end{itemize}}
Note that with these orientations the binding is oriented as the
boundary of the pages.

\begin{definition}[{\cite{Giroux_ICM}}] 
    \label{def: contact open book classical}
  Let $(V,\xi)$ be a closed contact manifold.
  We say that $\xi$ is \defin{supported} by an open book
  decomposition~$(K, \vartheta)$ of $V$, if $\xi$ admits a contact
  form~$\alpha$ such that
  \begin{enumerate}[(i)]
  \item The binding~$K$ is a contact submanifold with
    {\emph{positive}} contact
    form~$\alpha_K \coloneqq \restricted{\alpha}{TK}$.
  \item The restriction of $d\alpha$ to the interior $\Int F_\varphi$
    of every page is a \emph{positive} symplectic form.
  \end{enumerate}
  {In both cases, ``positive'' refers to the orientation induced
    on $K$ and the pages by the open book decomposition.}
  We call a contact form~$\alpha$ as above, \defin{adapted} to the
  open book, and we call $(\xi, K,\vartheta)$ a \defin{contact open
    book decomposition}.
\end{definition}

The following remark is in a way an extension of Remark~\ref{rmk: top
  open book specified by complex function} to the contact category.

{
\begin{remark}
  Let $V$ be a closed manifold, let $\alpha$ be a contact form with
  Reeb field~$R_\alpha$, and let $h = h_x + i h_y\colon V\to \CC$ be a
  smooth function.
  To show that $K_h \coloneqq h^{-1}(0)$ and $\vartheta_h = h/\abs{h}$
  define an open book~$(K_h, \vartheta_h)$ and that $\alpha$ is
  adapted to it, it suffices to verify that
  \begin{itemize}
  \item[(i)] $K_h$ is non-empty and
    $\alpha\wedge (d\alpha)^{n-1}\wedge dh_x\wedge dh_y$ is positive
    along $K_h$;
  \item[(ii)]
    $\frac{i}{2}\, \bigl(h\,d\bar h(R_\alpha) - \bar
    h\,dh(R_\alpha)\bigr) = h_x\,dh_y(R_\alpha) - h_y\, dh_x(R_\alpha)
    > 0$ on all of $V\setminus K_h$.
  \end{itemize}
  (Here, as above, $\bar h = h_x - i h_y$ denotes the complex conjugate.)
\end{remark}
\begin{proof}
  By condition~(i), $0$ is a regular value of $h$.
  Recall that
  $d\vartheta_h = \frac{i}{2\abs{h}^2}\, \bigl(h\,d\bar h - \bar
  h\,dh\bigr) = \frac{1}{\abs{h}^2}\, \bigl(h_x\, dh_y - h_y\,
  dh_x\bigr)$,
  thus condition~(ii) simply implies that
  $d\vartheta_h(R_\alpha) > 0$, and it follows that $\vartheta_h$ is a
  submersion.
  Thus $h$ defines an open book by Remark~\ref{rmk: top open book
    specified by complex function}.
  Let us now show that $\alpha$ is adapted to the open
  book~$(K_h,\vartheta_h)$.
  Since $\alpha\wedge (d\alpha)^n$ is a volume form,
  $\iota_{R_\alpha} \alpha\wedge (d\alpha)^n = (d\alpha)^n$ cannot be
  degenerate on any hyperplane transverse to $R_\alpha$.
  In particular, because $d\vartheta_h(R_\alpha) > 0$, the Reeb field
  is positively transverse to the interior of the pages, and $d\alpha$
  restricts to a positive symplectic form on them.
  {Because $TK_h$} lies in the kernel of the $2$-form
  $d\bigl(\abs{h}^2\, d\vartheta_h\bigr) = 2\,dh_x\wedge dh_y$,
  condition~$(i)$ implies then that $\alpha$ restricts to a contact
  form on $K_h$.
  Furthermore $d\bigl(\abs{h}^2\, d\vartheta_h\bigr)$ defines the
  positive coorientation for the binding, thus
  $\restricted{\alpha}{TK_h}$ is by~(i) positive.
\end{proof}
}

\vspace{0.8cm}

We now describe two elementary examples of contact open book
decompositions of the standard sphere that we will study in detail in
the next two sections of this appendix using the language of
\cite{GirouxIdealDomains}.

\begin{example}\label{example: contact open book decomposition for
    sphere}
  We assume that the unit sphere $\SS^{2n-1}\subset \CC^n$ is equipped
  with the standard contact structure~$\xi_0$, which is the hyperplane field
  of complex tangencies. Equivalently, this is given as the kernel of the
 $1$-form
  \begin{equation*}
    \alpha_0 = \frac{1}{2}\, \sum_{j=1}^n \bigl(x_j\,dy_j - y_j\,dx_j\bigr) \;,
  \end{equation*}
  where we write the coordinates of $\CC^n$ as
  $\z = (z_1,\dotsc,z_n) = (x_1+iy_1,\dotsc,x_n+iy_n)$.
  Every holomorphic function~$g\colon \CC^n \to \CC$ with an isolated
  singularity at the origin induces a contact open book decomposition
  of the standard sphere (after possibly shrinking the radius of the
  sphere, see \cite{MilnorComplexHypersurfaces} for the
  {topological} and \cite{MilnorOpenBooks} for the contact
  case).
  For the concrete applications we have in mind here, we will not
  appeal to this general result, and instead study the following two
  very explicit situations.
  \textbf{(a)} Let $g_1(z_1,\dotsc,z_n) = z_1$, then the binding is
  the submanifold~$K=\{z_1=0\}$ and the fibration is
  $\vartheta\colon (z_1,\dotsc,z_n)\mapsto z_1/\abs{z_1}$.
  The binding~$K$ is just the standard contact sphere and the Reeb
  vector field $R_0 = (iz_1, \dotsc, iz_n)$ for $\alpha_0$ generates
  the Hopf fibration which is transverse to the (the interior) of
  every page~$F_\varphi = \{\arg z_1 = \varphi\}$ so that $d\alpha_0$
  will restrict to a symplectic form defining the correct orientation
  on every page.
  {Furthermore, since the binding~$K$ is connected, it follows from
      Stokes' Theorem that 
    $\alpha$ induces the boundary orientation of the pages on $K$.}
  It follows that $(K,\vartheta)$ is a contact open book
  decomposition.
  \textbf{(b)} Let us now study the case of
  $g_2(z_1,\dotsc,z_n) = z_1^2+ \dotsm + z_n^2$.
  The complex hypersurface~$V_{g_2} = g_2^{-1}(0)$ is everywhere
  smooth except at the origin and since it is invariant under linear
  scaling
  $\lambda\cdot (z_1,\dotsc,z_n) = (\lambda\cdot z_1,\dotsc,
  \lambda\cdot z_n)$
  with $\lambda \in \RR_+$, it follows that $V_{g_2}$ is transverse to
  $\SS^{2n-1}$ so that the binding~$K = g_2^{-1}(0) \cap \SS^{2n-1}$
  is a smooth codimension~$2$ submanifold of the standard sphere.
  To check the contact condition note that the restriction of a
  plurisubharmonic function to a complex submanifold preserves this
  property.
  In our case, the restriction of $\z \mapsto \norm{\z}^2$ to
  $V_{g_2}$ is such a function, and $K$ is one of its regular level
  set, so that $K$ is a contact submanifold.
  For the pages, notice that the Reeb field~$R_0$ increases the
  argument of $g_2$ everywhere where $g_2$ does not vanish.
  This implies that $R_0$ is positively transverse to the pages, and
  in particular $d\alpha_0$ defines a symplectic structure on them.
\end{example}

It is well-known from the ``classical'' treatment that the page in the
first example is a ball with the standard symplectic structure and
that its monodromy is the identity.
In the second example, the page is the cotangent bundle of the sphere
and the monodromy is a generalized Dehn twist.
In Examples~\ref{example: ideal Liouville domain} and \ref{example:
  from open book to abstract open book} below we will work out the
abstract open books in these two cases using the formalism of ideal
Liouville domains.

\subsection{Ideal Liouville domains}\label{appendix: ideal liouville}

As we already mentioned above, the pages of an abstract open book will
be described by an ideal Liouville domain.

\begin{definition} 
    \label{def:ideal Liouville domain}
  Let $F$ be a compact manifold with boundary~$K := \p F$, and let
  $\omega$ be an exact symplectic form on the \emph{interior} $\Int F
  = F \setminus K$.
  The pair $(F, \omega)$ is an \defin{ideal Liouville domain} if there
  exists a primitive $\lambda \in \Omega^{1}(\Int F)$ for $\omega$
  such that:
  For any smooth function $u\colon F \to [0, \infty)$ with regular
  level set $K = u^{-1}(0)$, the $1$-form~$u\lambda$ extends to a
  smooth $1$-form $\lambda_u$ on all of $F$ whose restriction to $K$
  is a (positive) contact form.
  Every such primitive~$\lambda$ is called a \defin{Liouville form} of
  $(F, \omega)$.
\end{definition}

The intuitive picture of an ideal Liouville domain is that of a
classical Liouville domain that has been completed by attaching a
cylindrical end and has then been compactified by fixing a certain
asymptotic information at ``infinity'' that is captured in the
boundary of the ideal Liouville domain.
Below we give a formal description of this completion process.
For the many properties shared by these objects, we refer to
\cite{GirouxIdealDomains}.
In particular we point out that the contact structure induced on the
boundary is, {as observed by Courte}, already determined by
$(F,\omega)$ itself and does not depend on the auxiliary Liouville
form chosen \cite{GirouxIdealDomains}*{Proposition~2}.
We denote the space of all diffeomorphisms of $F$ that keep the
boundary pointwise fixed and that preserve $\omega$ on the interior by
$\Diff_\p(F,\omega)$.

\begin{definition} \label{def: abstract Liouville open book}
  An \defin{abstract Liouville open book} consists of an ideal
  Liouville domain~$(F,\omega)$ and a diffeomorphism
  $\phi \in \Diff_\p(F,\omega)$.
\end{definition}

\vspace{0.8cm}

We will now describe the \defin{completion of classical Liouville
  domains} allowing us to do the transition from classical to ideal
Liouville domains.
Recall that a \defin{``classical'' Liouville domain}~$(F,\lambda_c)$
is a compact manifold with boundary~$K$ such that
\begin{itemize}
\item $\omega_c := d\lambda_c$ is a symplectic form;
\item the Liouville vector field~$X_\lambda$ defined by the equation
  $\iota_{X_\lambda} \omega_c = \lambda_c$ points along $K$
  transversely out of the domain~$F$.
\end{itemize}
In particular it follows that $\lambda_c$ restricts on $K$ (oriented
as the boundary of $(F, \omega_c)$) to a positive contact form.
Following \cite{GirouxIdealDomains}*{Example~9}, we will convert
$(F,\lambda_c)$ into an ideal Liouville domain $(F,\omega)$, keeping
the smooth manifold~$F$ unchanged, but modifying $d\lambda_c$ to a new
symplectic form~$\omega$ on $\Int F$ (that will be related to but
different from $\omega_c$!).

\begin{lemma}\label{lemma: function u for completion}
  The space of all functions $u\colon F\to [0,\infty)$ satisfying
  \begin{itemize}
  \item $u^{-1}(0) = K$ is a regular level set;
  \item $X_\lambda(\ln u) < 1$ on $\Int F$ (or equivalently
    $du(X_\lambda) < u$ on all of $F$)
  \end{itemize}
  is convex and non-empty.
\end{lemma}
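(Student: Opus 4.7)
The plan is to prove convexity and non-emptiness separately; neither step is deep, but the verification of regularity of the level set $K$ for convex combinations deserves a moment's care.

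For convexity, given $u_1, u_2$ in the set and $t \in [0,1]$, set $u_t = (1-t)u_1 + tu_2$. Non-negativity and the identity $u_t^{-1}(0) = K$ are clear from the corresponding statements for $u_1, u_2$. That $K$ is a regular level set of $u_t$ follows from the observation that for each $u_i$, since $u_i \ge 0$, $u_i|_K = 0$, and $K$ is a regular level set of $u_i$, the differential $du_i|_p$ at $p \in K$ annihilates $T_pK$ and is \emph{strictly positive} on any vector pointing from $p$ into $F$. The convex combination inherits this strict positivity on inward vectors, so $du_t|_K \ne 0$. Finally, the inequality $du_t(X_\lambda) - u_t = (1-t)\bigl(du_1(X_\lambda) - u_1\bigr) + t\bigl(du_2(X_\lambda) - u_2\bigr)$ is a convex combination of two strictly negative quantities, hence strictly negative.

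For non-emptiness, I would use that $X_\lambda$ points transversely outward at $K = \partial F$ in the classical Liouville domain $(F,\lambda_c)$. Flowing inward from $K$ along $-X_\lambda$ for a short time produces a collar neighborhood $U \cong [0,\epsilon) \times K$ with $K = \{t = 0\}$, on which $X_\lambda = -\partial_t$, so in particular $dt(X_\lambda) = -1$. Pick a smooth non-decreasing function $\chi \colon [0,\infty) \to [0,\infty)$ with $\chi(s) = s$ for $s \le \epsilon/3$ and $\chi(s) \equiv \epsilon/2$ for $s \ge 2\epsilon/3$, and define
\begin{equation*}
  u(p) = \begin{cases} \chi(t(p)), & p \in U, \\ \epsilon/2, & p \in F \setminus U. \end{cases}
\end{equation*}
Then $u$ is smooth, $u \ge 0$ with $u^{-1}(0) = K$, and near $K$ one has $u = t$ so $du|_K = dt|_K \neq 0$, making $K$ a regular level set. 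For the differential inequality, on $U$ one computes $du(X_\lambda) = -\chi'(t) \le 0$, which is $<\chi(t) = u$ at every interior point and equals $-1 < 0 = u$ on $K$; off the collar, $du \equiv 0$ and $u = \epsilon/2 > 0$.

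The only point that might tempt one to overthink is whether the flow of $X_\lambda$ is well-defined long enough to produce the collar, but outward transversality at $\p F$ makes the backward flow locally well-defined on a neighborhood of $K$, which is all that is needed. Everything else reduces to linearity (for convexity) or the choice of the bump $\chi$ (for non-emptiness), so I do not anticipate any serious obstacle.
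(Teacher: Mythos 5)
Your proof is correct and follows essentially the same route as the paper's: for non-emptiness, a collar neighborhood of $K$ given by the Liouville flow with a function that is linear in the collar coordinate near $K$ and flattens out to a constant; for convexity, a direct linearity check. The one thing you do that the paper leaves implicit is to spell out why $K$ remains a regular level set of a convex combination, which you handle correctly by observing that each $du_i$ is strictly positive on inward-pointing vectors at $K$ (a consequence of $u_i\ge 0$, $u_i|_K=0$, and regularity), so the convex combination of the differentials stays nonzero there.
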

\begin{proof}
  Convexity is a basic calculation; for the existence use a collar
  neighborhood $(-\epsilon,0]\times K$ with coordinates $(t,x)$
  defined by the flow of $X_\lambda$, and let $u(t,x)$ be a function
  that agrees with $-t$ close to $t=0$, and flattens out to be
  constant on a slightly larger neighborhood of $K$.
\end{proof}

With a function~$u$ as in the previous lemma, we claim that
$\omega := d\bigl(\lambda_c/u\bigr)$ is an ideal Liouville structure
on $F$.
  Firstly, $\omega$ is symplectic on $\Int F$:
At points where $\lambda_c = 0$ we have
$\omega = \frac{1}{u}\, \omega_c$; to check the non-degeneracy of
$\omega$ at the remaining points note first that $\lambda_c$ vanishes
if and only if $X_\lambda$ does, then compute
\begin{equation*}
  \omega^n = \bigl(\frac{1}{u}\, d\lambda_c -
  \frac{1}{u^2}\,du\wedge \lambda_c\bigr)^n = \frac{1}{u^n}\,\bigl(
  \omega_c^n - n\,d(\ln u)\wedge \lambda_c\wedge
  \omega_c^{n-1}\bigr) \;.
\end{equation*}
Plugging $X_\lambda$ into $\omega^n$ and using that
$\iota_{X_\lambda}\omega_c = \lambda_c$ and
$\iota_{X_\lambda}\lambda_c = 0$, we see that
\begin{equation*}
  \iota_{X_\lambda}\omega^n = \frac{n}{u^n}\, \bigl(1 -  X_\lambda(\ln u)\, \bigr)
  \, \lambda_c\wedge \omega_c^{n-1} =  \frac{1}{u^n}\, \bigl(1 -  X_\lambda(\ln u)\, \bigr)  
  \, \iota_{X_\lambda} \omega_c^n \;,
\end{equation*}
which is non-degenerate.
This implies now that $(F,\omega)$ is an ideal Liouville domain,
because $\lambda := \frac{1}{u}\, \lambda_c$ is a primitive of
$\omega$ for which $u\, \lambda$ clearly restricts to a contact form
on $K$.
The contact structure on the boundary of $(F,\omega)$ is equal to the
initial contact structure.
As explained in \cite{GirouxIdealDomains}*{Example~9}, one can
equivalently obtain $(F,\omega)$ by attaching an infinite cylindrical
end to the boundary and then compactify this.
Also note that by the convexity of the admissible choices for $u$ the
completion is unique up to isotopy.
%

\

The completion of a classical Liouville domain 
to an ideal Liouville domain is particularly
straightforward when applied to a Weinstein
domain, or equivalently for our purposes, a Weinstein 
manifold~$(W,\omega, X, f)$ of finite--type.
In this case, choose a regular value~$C$ such that
$F:=\bigl\{f\le C\bigr\}$ is a non-empty domain with smooth boundary.
In particular $(F,\lambda)$ with the Liouville form
$\lambda :=\iota_X \omega$ is a classical Liouville domain, and the
function $u := C - f$ is non-negative on $F$, has the
boundary~$K=\p F$ as regular level set, and since $f$ is a Lyapunov
function for $X$, we check that $X(\ln u) = -\frac{1}{u}\, X(f) \le 0$
is always smaller than $1$.
The interior of the ideal Liouville domain~$F$ is symplectomorphic to
cutting off the part $\bigl\{f > C\bigr\}$ from $W$ and replacing it
by the cylindrical end of the level set $\{f=C\}$. By choosing $C$
sufficiently large, this then recovers the Weinstein manifold 
of finite-type $W$.

\

We will now illustrate the notion of an ideal Liouville domain with
two basic examples obtained via this completion procedure.
As we will see in the next section, these two examples correspond to
the pages of the open books from
Example~\ref{example: contact open book decomposition for sphere}.

\begin{example}\label{example: ideal Liouville domain}
  \textbf{(a)} Let $\overline{\DD}^{2n}$ be the closed unit disk in
  $(\CC^n, \omega_0 = d\lambda_0)$ with coordinates
  $\z = \x+i\y = (x_1+iy_1,\dotsc,x_n+iy_n)$ and let
  $\lambda_0 = \frac{1}{2}\, \sum_{j=1}^n (x_j\,dy_j - y_j\,dx_j)$ be
  the standard Liouville form.
  We could of course use the fact that $\CC^n$ is a Weinstein manifold
  with Liouville vector field
  $X_\lambda = \frac{1}{2}\, \sum_{j=1}^n
  \bigl(x_j\,\frac{\partial}{\partial x_j} +
  y_j\,\frac{\partial}{\partial {y_j}}\bigr)$
  and Lyapunov function $f(\z) = \norm{\z}^2$ to apply the remark we
  just made.
  Instead we will perform the completion procedure using the function
  \begin{equation*}
    u\colon \overline{\DD}^{2n} \to [0,\infty),\, \z \mapsto
    1-\norm{\z}^4 \;.
  \end{equation*}
  The reason we make this unexpected choice for $u$ is to
  recover the page of the abstract open book in
  Example~\ref{example: from open book to abstract open book} below.
  Recall that up to symplectomorphism, the ideal Liouville domain does
  not depend on the particular choice of the function satisfying the
  properties of Lemma~\ref{lemma: function u for completion}.
  Note first that the boundary of the closed disk is a regular level
  set of $u$ since $u$ factors as
  $u(\z) = (1-\norm{\z})\, (1+\norm{\z})\,(1+\norm{\z}^2)$.
  Furthermore, $X_\lambda(\norm{\z}^4) \ge 0$ so that
  $X_\lambda\bigl(\ln (1-\norm{\z}^4)\bigr) \le 0$.
  Setting $\lambda = \frac{1}{1-\norm{\z}^4}\, \lambda_0$, we obtain
  that $\bigl(\overline{\DD}^{2n}, d\lambda)$ is the desired
  completion of $(\overline{\DD}^{2n}, \lambda_0)$ to an ideal
  Liouville domain.
  The interior of the ideal Liouville domain
  $\bigl(\overline{\DD}^{2n}, d\lambda)$ is symplectomorphic to
  $(\CC^n, \omega_0 = d\lambda_0)$:
  Simply use the diffeomorphism
  $\DD^{2n} \to \CC^n, \, \z \mapsto \frac{1}{\sqrt{1-\norm{\z}^4}}\,
  \z$ to pull-back $\lambda_0$.
  \textbf{(b)} Let us now see how to associate an ideal Liouville
  domain to a unit cotangent bundle.
  For this, let $(L, g)$ be a closed Riemannian manifold, and let
  $\lcan$ be the canonical $1$-form on $T^*L$.
  It is well-known that $(T^*L, d\lcan, X_\lambda, f)$ with
  $X_\lambda = \bfp\cdot \p_\bfp$ and $f(\bfq,\bfp) = \norm{\bfp}^2$
  is a Weinstein manifold.
  As described above we can apply the completion using the function
  $u=1-f$ so that the (closed) unit disk bundle
  $\overline{\DD}(T^*L) = \bigl\{ (\bfq,\bfp) \in T^*L \bigm|\,
  \norm{\bfp} \le 1\bigr\}$
  is an ideal Liouville domain with the symplectic structure given by
  $d\lambda$ where we have set
  $\lambda = \frac{1}{1-\norm{\bfp}^2}\, \lcan$.
  In this case, we can identify the interior of
  $\bigl(\overline{\DD}(T^*L), d\lambda\bigr)$ with $(T^*L, d\lcan)$
  using the map
  $\DD(T^*L) \to T^*L, \, (\bfq, \bfp) \mapsto \bigl(\bfq,
  \bfp/(1-\norm{\bfp}^2)\bigr)$.

\end{example}

\subsection{From contact open book decompositions to abstract
  Liouville open books and back}\label{appendix: open book to
  abstract}

The link between abstract Liouville open books and contact open books
is established by the following intermediate object.

\begin{definition} \label{definition:Liouville open book}
  A \defin{Liouville open book} $(K, \vartheta, \omega_t)$ on a closed
  manifold~$V$ is an open book $(K,\vartheta)$, each of whose
  pages~$F_t$ is equipped with a (positive) ideal Liouville
  structure~$\omega_t \in \Omega^2(\Int F_t)$.
  To guarantee a certain compatibility between the $\omega_t$, we
  require that there is
  \begin{itemize}
  \item a global smooth $1$-form~$\beta$ on $V$ {called} a
    \defin{binding form} and
  \item a function $f\colon V \to \CC$ defining the open book (as in
    Remark~\ref{rmk: top open book specified by complex function})
  \end{itemize}
  such that $\beta/\abs{f}$ restricts on the interior of each page
  $\Int F_t = F_t \setminus K$ to a Liouville form of $\omega_t$.

  We say that the pair~$(\beta, f)$ is a \defin{representation} of the
  Liouville open book.
\end{definition}

Note in particular that a binding form induces a positive contact form
on the binding, since $\beta$ restricts on the boundary of each page
to a contact form.
We often make use of the following technical lemma.

\begin{lemma}\label{lemma: volume form from open book}
  Let $(K, \vartheta, \omega_t)$ be a Liouville open book on a
  manifold~$V$.
  Choose a representation~$(\beta,f)$ such that
  $\lambda := \beta/\abs{f}$ restricts on the interior of each page
  $\Int F_t = F_t \setminus K$ to a Liouville form of
  $\restricted{\omega_t}{\Int F_t}$.
  Then it follows that
  \begin{equation*}
    \Omega_V := \abs{f}^{n+2}\, d\vartheta\wedge  (d\lambda)^n
  \end{equation*}
  extends to a well-defined volume form on all of $V$.
  Furthermore, writing $f = \rho\, \e^{i \vartheta}$, we have
  \begin{equation}
    \Omega_V = n\, \rho\, d\rho \wedge d\vartheta\wedge \beta \wedge
    (d\beta)^{n-1} + \rho^2\, d\vartheta\wedge(d\beta)^n \;.
    \label{eq: volume form from open book}
  \end{equation}
\end{lemma}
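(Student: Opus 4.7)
My plan is to first derive the displayed formula by a direct algebraic expansion, then read off smoothness and non-degeneracy from it.

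\medskip

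First I would compute $d\lambda$ from $\lambda = \beta/\rho$, obtaining
\begin{equation*}
d\lambda = \tfrac{1}{\rho}\, d\beta - \tfrac{1}{\rho^{2}}\, d\rho \wedge \beta
\end{equation*}
on $V \setminus K$. Since $(d\rho \wedge \beta) \wedge (d\rho \wedge \beta) = 0$, only the linear cross-term survives in the $n$-th power, giving
\begin{equation*}
(d\lambda)^{n} = \tfrac{1}{\rho^{n}}\, (d\beta)^{n} - \tfrac{n}{\rho^{n+1}}\, d\rho \wedge \beta \wedge (d\beta)^{n-1}.
\end{equation*}
Multiplying by $\rho^{n+2}\, d\vartheta$ and rearranging the wedge of $d\vartheta$ with $d\rho$ then yields exactly the claimed expression \eqref{eq: volume form from open book}. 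This is a clean one-line calculation with no surprises.

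\medskip

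Next I would verify that the right-hand side of \eqref{eq: volume form from open book} extends smoothly across the binding. The only potentially singular object is $d\vartheta$, but everywhere it appears it is paired with a positive power of $\rho$. Writing $f = f_x + i f_y$, the identities
\begin{equation*}
\rho^{2}\, d\vartheta = f_x\, df_y - f_y\, df_x, \qquad \rho\, d\rho \wedge d\vartheta = df_x \wedge df_y
\end{equation*}
(both checked by direct differentiation) show that each of the two summands equals a wedge of smooth forms in $f_x, f_y, \beta$, and is therefore well-defined on all of $V$.

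\medskip

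Finally I would show the form is everywhere non-zero (and positive with respect to the orientation of $V$). Away from $K$, the definition $\Omega_V = \rho^{n+2}\, d\vartheta \wedge (d\lambda)^{n}$ makes non-degeneracy immediate: by the Liouville condition, $(d\lambda)^{n}$ restricts to a positive volume form on each interior page $\Int F_t$, and $d\vartheta$ is positive on a vector field transverse to the pages. Along $K$ the second summand of \eqref{eq: volume form from open book} vanishes since $\rho = 0$, and the first summand becomes $n\, df_x \wedge df_y \wedge \beta \wedge (d\beta)^{n-1}$. Here $df_x \wedge df_y$ is non-degenerate on the normal $2$-plane of $K$ (because $0$ is a regular value of $f$) and, in the tubular coordinates $K\times \DD^{2}$ on which $\vartheta$ matches the angular coordinate, it restricts to the positive disk area element $r\, dr \wedge d\varphi$. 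Meanwhile $\beta \wedge (d\beta)^{n-1}$ restricts on $TK$ to a positive contact volume form by the definition of a binding form, and the product of these two is the positive volume element on $V$ along $K$. The main (minor) obstacle is keeping careful track of orientations: one must confirm that the identification of the normal disk orientation with $df_x \wedge df_y > 0$ is consistent with the orientation conventions fixed by the open book and by the binding form.
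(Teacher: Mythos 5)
Your proof is correct and follows essentially the same route as the paper's: expand $(d\lambda)^n$ with $\lambda = \beta/\rho$, multiply by $\rho^{n+2}\,d\vartheta$ to obtain \eqref{eq: volume form from open book}, then observe that each summand is a wedge of globally smooth forms and is non-degenerate (the second term dominates away from $K$ via the Liouville condition on pages, the first term dominates along $K$ via regularity of $f$ together with the contact condition on the binding). The paper compresses these steps into a few lines; you have simply written them out in full, including the useful identities $\rho^2\,d\vartheta = f_x\,df_y - f_y\,df_x$ and $\rho\,d\rho\wedge d\vartheta = df_x\wedge df_y$ that make the smooth extension transparent.
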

\begin{proof}
  It is clear that $\Omega_V$ is a volume form on $V\setminus K$, so
  it only remains to analyze its behavior along the binding.
  Writing $f$ in polar coordinates, and replacing $\lambda$ by
  $\beta/\rho$, {we obtain \eqref{eq: volume form from
      open book} whose right--hand side} is defined on all of $V$.
  {Its} second term vanishes along the binding while the first
  one is positive, since the binding itself is a positive contact
  submanifold of $(V,\xi)$.
  This proves that $\Omega_V$ is a volume form.
\end{proof}

\vspace{0.8cm}

Table~\ref{table: relationship between types of open book} summarizes
the three notions introduced so far and their relationships:

\begin{table}[h]
\begin{center}
    \begin{tabular}{ m{0.35\textwidth} c m{0.28\textwidth}  c m{0.3\textwidth}}
      \textbf{abstract Liouville open book} \newline
      ideal Liouville domain~$(F,\omega)$ \newline
      diffeomorphism~$\phi \in \Diff_\p(F,\omega)$
      & $\leftrightarrow$
      & \textbf{Liouville open book} \newline
        open book~$(K,\vartheta)$ on $V$ \newline
        ideal Liouville structure~$\omega_t$ on each page~$F_t$
      & $\leftrightarrow$
      & \textbf{contact open book} \newline
        open book~$(K,\vartheta)$ on $V$ \newline
        contact structure~$\xi$ on $V$
    \end{tabular}
  \end{center}
  \caption{The different types of open books and their relationship.}
  \label{table: relationship between types of open book}
\end{table}

\vspace{0.8cm}

The connection between contact open books and Liouville open books is
the following:
A contact structure is said to be \defin{symplectically supported} by
a Liouville open book if it admits a contact form that is a binding
form.

\begin{proposition}[\cite{GirouxIdealDomains}*{Proposition~18}]\label{prop:contact_open_book_is_Liouville_ob}
  If $(K, \vartheta)$ is a contact open book on $V$ supporting the
  contact structure~$\xi$, and $f \colon V \to \CC$ is any defining
  function, then there exists a contact form~$\alpha$ such that
  $d(\alpha/\abs{f})$ restricts to each page as an ideal Liouville
  structure.
  Furthermore, for fixed $f$, the set of such forms~$\alpha$
  {is} a non-empty convex cone.  \qedhere
\end{proposition}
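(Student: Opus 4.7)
The plan is to reformulate the ideal Liouville condition as positivity of a single global top form on $V$, construct an example by normalizing $\alpha$ near the binding and interpolating outward, and then deduce the convex cone property directly from this reformulation.

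Given a contact form $\alpha$ for $\xi$ and $\lambda = \alpha/\abs{f}$, Lemma~\ref{lemma: volume form from open book} identifies
\begin{equation*}
  \Omega_V(\alpha) \coloneqq \abs{f}^{n+2}\, d\vartheta \wedge (d\lambda)^n = n\rho\, d\rho \wedge d\vartheta \wedge \alpha \wedge (d\alpha)^{n-1} + \rho^2\, d\vartheta \wedge (d\alpha)^n
\end{equation*}
(with $\rho = \abs{f}$) as a smooth $(2n+1)$-form on all of $V$, and $d\lambda$ restricts to each page as an ideal Liouville structure if and only if $\Omega_V(\alpha) > 0$ everywhere. If $\alpha$ is already adapted to the open book, the second summand is strictly positive on $V \setminus K$ since $(d\alpha)^n\restriction_{\Int F_t}$ is a positive symplectic volume and $d\vartheta$ is positively transverse to the pages; it vanishes to order $\rho^2$ at $K$, while the first summand vanishes only to first order there.

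To produce such an $\alpha$, I would start from any adapted contact form $\alpha_0$ for $\xi$ (which exists by the supporting hypothesis) and modify it near the binding. The contact neighborhood theorem for the contact submanifold $K$, combined with the trivialization of its normal bundle provided by $f$, allows me to identify a tubular neighborhood $\mathcal{N} \cong K \times \DD^2$ with polar coordinates $(r,\vartheta)$ on $\DD^2$ in which $f = r\, e^{i\vartheta}$ and $\alpha_0$ is contact--isotopic to the model form $\alpha_{\mathrm{mod}} = \alpha_K + \tfrac{r^2}{2}\, d\vartheta$, with $\alpha_K$ the induced positive contact form on $K$. A direct computation in this model (using $(d\alpha_K)^n = 0$ by dimension and $d\vartheta \wedge d\vartheta = 0$) yields $\Omega_V(\alpha_{\mathrm{mod}}) = n\, r\, dr \wedge d\vartheta \wedge \alpha_K \wedge (d\alpha_K)^{n-1}$, a positive volume form near $K$. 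The main obstacle is the global patching: I would interpolate between $\alpha_{\mathrm{mod}}$ on a small tubular neighborhood and $\alpha_0$ on its complement, controlling the intermediate annular region by openness of the contact condition and by the fact that shrinking the collar makes the strictly positive $\rho^2\, d\vartheta \wedge (d\alpha_0)^n$ contribution dominate the error term introduced by the interpolation.

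For the convex cone structure, the homogeneity $\Omega_V(c\alpha) = c^{n+1}\, \Omega_V(\alpha)$ for $c > 0$ immediately gives the cone property. For convexity, any two forms $\alpha_0, \alpha_1$ in the set have the same kernel $\xi$ and so satisfy $\alpha_1 = h\,\alpha_0$ for a positive function $h$ on $V$; consequently $\alpha_0 + \alpha_1 = (1+h)\alpha_0$ is a positive contact form for $\xi$ whose restriction to $K$ is the positive contact form $(1+h\restriction_K)\,\alpha_K$, and on each page the restriction $d\bigl((\alpha_0+\alpha_1)/\abs{f}\bigr)\restriction_{F_t}$ equals the sum $d(\alpha_0/\abs{f})\restriction_{F_t} + d(\alpha_1/\abs{f})\restriction_{F_t}$. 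The positivity of $\Omega_V(\alpha_0 + \alpha_1)$ then follows from the corresponding positivity for $\alpha_0$ and $\alpha_1$ individually by inspecting both summands in the formula above, completing the argument.
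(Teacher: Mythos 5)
The paper does not actually prove this proposition; it is cited directly as Proposition~18 of \cite{GirouxIdealDomains}, and the \texttt{\textbackslash qedhere} in the statement merely terminates a quoted, not proved, result. So there is no proof in this paper to compare against, and your argument must stand on its own. It has a genuine gap in the existence step. You intend to keep an adapted contact form $\alpha_0$ unchanged away from $K$ and only modify it near the binding. But the ideal Liouville condition for the fixed defining function $f$, rewritten on the interior of each page, is
\begin{equation*}
  \rho\,(d\alpha_0)^n \;-\; n\, d\rho \wedge \alpha_0 \wedge (d\alpha_0)^{n-1} \;>\; 0
\end{equation*}
with $\rho = \abs{f}$ and all forms restricted to the page. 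Being adapted in the sense of Definition~\ref{def: contact open book classical} gives you only $(d\alpha_0)^n > 0$ on the pages; the $d\rho$-term is completely uncontrolled in sign for a general adapted $\alpha_0$ and a general fixed $f$. So outside your collar the interpolation is not between two forms satisfying the condition — one endpoint may already violate it — and shrinking the collar does nothing to repair this. A correct argument needs to modify $\alpha_0$ \emph{globally} (for instance by a conformal rescaling $\alpha_0 \mapsto e^g\alpha_0$ and a partition-of-unity argument that uses the convexity), not just in a neighborhood of $K$. You also assume without justification that $f = r\,e^{i\vartheta}$ and $\alpha_0 = \alpha_K + \tfrac{r^2}{2}\,d\vartheta$ can be arranged simultaneously in the same tubular coordinates; only one of the two normalizations comes for free from the respective neighborhood theorem.

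The convexity argument is also under-justified, though the claim is right. Since $\Omega_V$ has degree $n$ in $\alpha$ rather than being linear, the observation that $d\bigl((\alpha_0 + \alpha_1)/\abs{f}\bigr)$ is the sum of $d(\alpha_0/\abs{f})$ and $d(\alpha_1/\abs{f})$ does not by itself give $(d\lambda_0 + d\lambda_1)^n > 0$ — sums of positive symplectic forms on a fixed oriented vector space need not have positive top power. What saves you is exactly the conformal relation $\alpha_1 = h\alpha_0$ that you already noted. Writing $\lambda_i = \alpha_i/\rho$ and $g = 1+h$, one computes
\begin{equation*}
  \bigl(d(g\lambda_0)\bigr)^n
  = g^{n-1}\Bigl[\,g\,(d\lambda_0)^n + n\, dg \wedge \lambda_0 \wedge (d\lambda_0)^{n-1}\Bigr]
  = g^{n-1}\Bigl[(d\lambda_0)^n + h^{1-n}\,(d\lambda_1)^n\Bigr],
\end{equation*}
using $(d\lambda_1)^n = (d(h\lambda_0))^n = h^{n-1}[h(d\lambda_0)^n + n\,dh\wedge\lambda_0\wedge(d\lambda_0)^{n-1}]$; this is manifestly positive. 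For the cone the exponent is $\Omega_V(c\alpha) = c^n\Omega_V(\alpha)$, not $c^{n+1}$ (harmless slip, since either gives closure under positive scaling).
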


In other words, for each defining function $f$, there is a contact
form~$\alpha$ such that $(\alpha, f)$ is the representation of a
Liouville open book on $V$.
(Notice also that the space of defining functions for a given open
book is also convex and non-empty, so the space of pairs is
contractible.)
We also have a converse:

\begin{lemma}[\cite{GirouxIdealDomains}*{page
    19}]\label{lem:Liouville_OB_is_contact_OB}
  If the contact structure $\xi$ on $V$ is symplectically supported by
  a Liouville open book, then $\xi$ is supported (in the sense of
  Definition~\ref{def: contact open book classical}) by the underlying
  smooth open book.
\end{lemma}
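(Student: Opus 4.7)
The plan is to verify the two conditions of Definition~\ref{def: contact open book classical} directly from the given representation $(\alpha,f)$ of the Liouville open book. Condition (i), that $\alpha|_{TK}$ is a positive contact form on the binding, is essentially tautological: since $\lambda = \alpha/\abs{f}$ restricts on each page $F_t$ as a Liouville form of the ideal Liouville structure, Definition~\ref{def:ideal Liouville domain} forces $\alpha = \abs{f}\lambda$ to extend smoothly to $F_t$ with restriction to $\partial F_t = K$ a positive contact form.

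For condition (ii), I will apply Lemma~\ref{lemma: volume form from open book} with $\beta = \alpha$, which asserts that
\begin{equation*}
\Omega_V = \abs{f}^{n+2}\,d\vartheta\wedge(d\lambda)^n = n\abs{f}\,d\abs{f}\wedge d\vartheta\wedge\alpha\wedge(d\alpha)^{n-1} + \abs{f}^2\,d\vartheta\wedge(d\alpha)^n
\end{equation*}
is a positive volume form on $V$. Contracting with a vector field $Y$ satisfying $d\vartheta(Y) = 1$ and restricting to $T\Int F_t$ (where $d\vartheta$ vanishes), the first expression yields the positive volume form $\abs{f}^{n+2}\omega_t^n$, already confirming positivity of $\omega_t^n$ on $\Int F_t$. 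Expanding $d\alpha|_{T\Int F_t} = \abs{f}\,\omega_t + d\abs{f}\wedge\lambda$ via $(d\abs{f}\wedge\lambda)^2 = 0$, together with the symplectic identity $d\abs{f}\wedge\lambda\wedge\omega_t^{n-1} = -\tfrac{1}{n}\lambda(X_f)\,\omega_t^n$ (obtained from the vanishing of the top-degree form $\lambda\wedge\omega_t^n$, where $X_f$ denotes the Hamiltonian vector field of $\abs{f}$ with respect to $\omega_t$) and the relation $\lambda(X_f) = -Z(\abs{f})$ for $Z$ the Liouville vector field of $\lambda$ on $\Int F_t$, one obtains
\begin{equation*}
(d\alpha|_{T\Int F_t})^n = \abs{f}^{n-1}\bigl(\abs{f} + Z(\abs{f})\bigr)\,\omega_t^n.
\end{equation*}

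The main obstacle is then establishing the strict inequality $\abs{f} + Z(\abs{f}) > 0$, or equivalently $Z(\ln\abs{f}) > -1$, on $\Int F_t$. This is the ideal-Liouville analogue of the condition appearing in Lemma~\ref{lemma: function u for completion}. The plan is to realize $(F_t, \omega_t, \lambda)$ as the ideal completion of a classical Liouville domain with defining function $u$ and Liouville field $X_c$, giving the explicit expression $Z = u\,X_c/(u - X_c(u))$ and hence $u + Z(u) = u^2/(u - X_c(u)) > 0$ from the Lyapunov bound $X_c(u) < u$; the conclusion is then transferred to the specific admissible defining function $\abs{f}$ (which need not coincide with $u$) using the convexity of the space of admissible completion functions together with the fact that $\abs{f}\lambda$ restricts on $K$ to a positive contact form.
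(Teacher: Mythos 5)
Your handling of condition (i) is fine, and your computation reducing condition (ii) to the pointwise inequality $\abs{f} + Z(\abs{f}) > 0$ on $\Int F_t$, where $Z$ is the Liouville vector field of $\omega_t$, is correct. The gap is in the final ``transfer'' step, and it is not a missing detail but a wrong argument: convexity of the set of admissible completion functions does not propagate the inequality $u + Z(u) > 0$. Every admissible function on $F_t$ can be written as $v = g\,u$ for some smooth $g > 0$, and the same kind of computation you set up gives $v + Z(v) = u^2\bigl(g + X_c(g)\bigr)/\bigl(u - X_c(u)\bigr)$, whose sign is that of $g + X_c(g)$; this is negative wherever $g$ decreases fast enough along the classical Liouville field $X_c$. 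The only constraint Definition~\ref{def:ideal Liouville domain} imposes on $\abs{f}$ beyond smoothness and regularity of the zero set is the restriction of $\abs{f}\,\lambda$ to $K$, which fixes $g$ on $K$ but says nothing about $X_c(g)$ in the interior.

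In fact the strategy of verifying Definition~\ref{def: contact open book classical} for the \emph{given} pair $(\alpha, f)$ cannot work, because a contact form that is a binding form need not be adapted. On $V = \SS^3$ with $f = z_1$ and the standard contact form $\alpha_0$ (so that $\lambda_0 = \alpha_0/\abs{z_1}$ is a Liouville form on each page), the pair $(g\alpha_0, gf)$ is, for any smooth positive $g$, still a representation of the same Liouville open book (same $\lambda_0$, same $\omega_t$), and $g\alpha_0$ is a contact form. Taking $g = e^{-M\abs{z_2}^2}$ with $M > 1$ and writing $z_2 = re^{i\phi}$ on the page $F_0 = \{y_1 = 0,\, x_1 \ge 0\}$, one finds $\restricted{d(g\alpha_0)}{F_0} = g\,r(1 - Mr^2)\,dr\wedge d\phi$, which is negative for $r^2 > 1/M$, violating condition~(ii). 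So the lemma must be read as an existence statement --- the contact structure $\xi$ admits \emph{some} adapted contact form, not necessarily the binding form one started with --- and a correct proof has to produce that form, for instance by running Giroux's reconstruction of the supported contact structure from the abstract Liouville open book data, rather than by inspecting the given $\alpha$.
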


These two facts justify our earlier definition that a pair
$(\alpha, f)$ is a representation of a contact open book decomposition
when {$\alpha$ is a contact form and $(\alpha, f)$} is a
representation of a Liouville open book with defining function~$f$.
Additionally, from \cite{GirouxIdealDomains}*{Proposition~21}, the
symplectically supported contact structures form a non-empty and
weakly contractible subset in the space of all hyperplane fields.

\vspace{0.3cm}

To describe now the connection between Liouville open books and
abstract Liouville open books, we need the following notion.

\begin{definition}
  Let $(K, \vartheta, \omega_t)$ be a Liouville open book on $V$.
  A smooth vector field~$X$ on $V$ is called a \defin{spinning vector
    field}, if it satisfies the following properties:
  \begin{itemize}
  \item $X$ vanishes along the binding~$K$, and $d\vartheta(X) = 2\pi$
    on $V\setminus K$,
  \item the flow of $X$ preserves the ideal Liouville structure on
    every page.
  \end{itemize}
  The \defin{monodromy} of the Liouville open book is a diffeomorphism
  $\phi\colon F_0 \to F_0$ obtained by restricting the time-$1$ flow
  of a spinning vector field~$X$ to the page~$F_0$.
  Clearly, $\phi$ is an element of $\Diff_\p(F_0,\omega_0)$.
\end{definition}

\begin{lemma}\label{lemma: exists vector field to recover monodromy}
  Let $(K, \vartheta, \omega_t)$ be a Liouville open book with a
  representation~$(\beta, f)$ so that
  $\lambda = \beta/\abs{f}$ restricts to a Liouville form on
  the interior of each page.
  There exists a unique vector field~$Y$ satisfying the two equations
  \begin{equation*}
    d\vartheta(Y) = 2\pi\quad \text{ and }
    \iota_Y d\lambda = 0 \;.
  \end{equation*}
  This field is a spinning vector field of the Liouville open book.
\end{lemma}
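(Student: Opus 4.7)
My plan is to establish first the uniqueness and local existence of $Y$ on $V\setminus K$ by a fiberwise linear algebra argument, then extend $Y$ smoothly across $K$ using the volume form from Lemma~\ref{lemma: volume form from open book}, and finally check the spinning property by Cartan's formula.

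\textbf{Uniqueness and existence on the complement of the binding.}
On $V\setminus K$ the form $d\lambda$ is smooth, and by hypothesis its restriction to each page $F_t = \{\vartheta = t\} \cup K$ is symplectic of rank $2n$. Since $T_pV$ is $(2n+1)$-dimensional and $T_p(\Int F_t) = \ker d\vartheta_p$ carries a nondegenerate $d\lambda$, the kernel of $d\lambda_p$ on $T_pV$ is a line transverse to $\ker d\vartheta_p$. Hence $d\vartheta$ restricts to a linear isomorphism $\ker d\lambda_p \to \RR$, and there is exactly one vector $Y_p$ in $\ker d\lambda_p$ with $d\vartheta(Y_p) = 2\pi$. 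This $Y$ is smooth on $V\setminus K$ by smooth dependence of the characterization.

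\textbf{Smooth extension across the binding.}
The main obstacle is to show that the singular-looking $Y$ produced by dividing by $d\lambda$ actually extends smoothly to $K$ with $Y|_K = 0$. For this I use the volume form $\Omega_V = \rho^{n+2}\,d\vartheta \wedge (d\lambda)^n$ from Lemma~\ref{lemma: volume form from open book}, which is smooth on all of $V$. A short computation (writing $\lambda = \beta/\rho$ and expanding $(d\lambda)^n$, noting that $(d\rho\wedge \beta)^2 = 0$) shows that the $2n$-form
\begin{equation*}
  \rho^{n+2}(d\lambda)^n = \rho^2 (d\beta)^n - n\,\rho\, d\rho\wedge \beta \wedge (d\beta)^{n-1}
\end{equation*}
is itself smooth on all of $V$. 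Now on $V\setminus K$ one has $\iota_Y d\lambda = 0$ and $d\vartheta(Y) = 2\pi$, which together give the global identity
\begin{equation*}
  \iota_Y \Omega_V = 2\pi\, \rho^{n+2}(d\lambda)^n .
\end{equation*}
Since $\Omega_V$ is a volume form and the right-hand side is smooth on $V$, this identity defines a unique smooth vector field $Y$ on all of $V$, agreeing with the previously defined one on $V\setminus K$. Because the right-hand side vanishes on $K$ (both terms carry a factor of $\rho$), we conclude $Y|_K = 0$.

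\textbf{Spinning property.}
It remains to verify that $Y$ is a spinning vector field. The flow $\varphi_t$ of $Y$ satisfies $\frac{d}{dt}\vartheta(\varphi_t(p)) = d\vartheta(Y) = 2\pi$ on $V\setminus K$, so $\varphi_t$ sends $F_{t_0}$ to $F_{t_0 + 2\pi t}$, and it fixes $K$ pointwise because $Y$ vanishes there. By Cartan's formula, $\lie{Y} d\lambda = d\iota_Y d\lambda + \iota_Y d^2\lambda = 0$, so $\varphi_t^*(d\lambda) = d\lambda$ on $V\setminus K$. Restricting to pages shows that $\varphi_t$ identifies the ideal Liouville structure $\omega_{t_0}$ with $\omega_{t_0 + 2\pi t}$, which is exactly the second condition in the definition of a spinning vector field. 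Combined with $d\vartheta(Y) = 2\pi$ and $Y|_K = 0$, this completes the proof.
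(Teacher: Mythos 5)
Your proof is correct and follows essentially the same strategy as the paper: the central step in both is contracting $Y$ into the global volume form $\Omega_V$ of Lemma~\ref{lemma: volume form from open book} to obtain $\iota_Y \Omega_V = 2\pi\,\rho^{n+2}(d\lambda)^n$, whose right-hand side is manifestly smooth on all of $V$ and vanishes along $K$, thereby extending $Y$ across the binding. The paper simply dismisses the existence of $Y$ on $V\setminus K$ and the Liouville-preserving property of its flow as clear, whereas you spell out the fiberwise linear algebra (that $\ker d\lambda_p$ is a line complementary to $\ker d\vartheta_p$) and the Cartan-formula check; these are helpful elaborations but not a different route.
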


  \begin{proof}
    It is clear that $Y$ is defined on $V\setminus K$ and that its
    flow preserves the Liouville structure on the pages, but the
    properties of $Y$ along the binding are less obvious.
    Let us plug $Y$ into the volume form~$\Omega_V$ from
    Lemma~\ref{lemma: volume form from open book}
    \begin{equation*}
      \iota_Y\Omega_V = 2\pi\, \abs{f}^{n+2}\, (d\lambda)^n
      = 2\pi\,\abs{f}^2\,(d\beta)^n
      - \pi n\, d(\abs{f}^2)\wedge \beta\wedge (d\beta)^{n-1} \;.
    \end{equation*}
    Since the righthand side is defined on all of $V$, and since
    $\Omega_V$ is a volume form, we have found a defining equation for
    $Y$ that shows that $Y$ is everywhere smooth and vanishes along
    $K$.
  \end{proof}

\vspace{0.3cm}
One can easily obtain an abstract open book from a proper Liouville
open book by keeping only one of its pages and choosing the monodromy
with respect to any spinning vector field.
All pages are isomorphic, and spinning vector fields form a convex
subset so that all choices will lead to homotopic abstract Liouville
open books.
Starting from an abstract Liouville open book~$(F,\omega)$ with
diffeomorphism~$\phi \in \Diff_\p(F,\omega)$, Giroux constructs first
a mapping torus and then blows down its boundary to obtain the
binding, producing this way a Liouville open book, and thus the
desired bijection.

\begin{example}\label{example: from open book to abstract open book}
  Let us now come back to the contact open book decompositions
  introduced in Example~\ref{example: contact open book decomposition
    for sphere} and illustrate how to recover their abstract Liouville
  open books by applying the formalism explained above.
  \textbf{(a)} Recall that the function $g_1(z_1,\dotsc,z_n) = z_1$
  determines a contact open book~$(K, \vartheta)$ on the standard
  sphere.
  We will see that the page of the corresponding abstract open book is
  the ideal Liouville domain given by Example~\ref{example: ideal
    Liouville domain}.(a) and that the monodromy is the identity.
  This is then a special case of the more general \ref{example:trivial
    monodromy}.
    We present this example first because of its concreteness.
  Every page $F_t = \{\arg z_1 = t\} \cup K$ is diffeomorphic to the
  closed unit disk
  $\overline{\DD}^{2n-2} =
  \bigl\{(q_1+ip_1,\dotsc,q_{n-1}+ip_{n-1})\in \CC^{n-1}\bigm|\,
  \norm{\bfq+i\bfp} \le 1\bigr\}$
  which can be embedded into $\SS^{2n-1}$ using the inverse of the
  stereographic projection\footnote{The ``most obvious candidate'' for
    such an embedding, a map of the type
    $(x,y) \mapsto \bigl(x,y,\sqrt{1-(x^2+y^2)}\bigr)$ fails to be
    smooth along the boundary, and is thus not suitable for our
    purposes!}
  \begin{equation*}
    \iota_t\colon (\bfq+i\bfp) \mapsto
    \frac{1}{1 + \norm{\bfq + i\bfp}^2}\,
    \Bigl((1-\norm{\bfq + i\bfp}^2)\,e^{it};
    2\,(\bfq+i\bfp)\Bigr) \;.
  \end{equation*}
  The $1$-form $\beta := \alpha_0/\abs{g_1}$ is the binding form for
  the Liouville open book
  $\bigl(K, \vartheta, \restricted{d\beta}{F_t}\bigr)$.
  That this really defines a Liouville structure on the pages can be
  verified by pulling back $\beta$ with $\iota_t$ to $\DD^{2n-2}$.
  We obtain
  $\abs{z_1\circ \iota_t} = \frac{1-\norm{\bfq+i\bfp}^2}{1 +
    \norm{\bfq+i\bfp}^2}$
  and
  $\iota_t^* \alpha_0 = \frac{2}{(1 + \norm{\bfq+i\bfp}^2)^2}\,
  \lambda_0$ so that
  \begin{equation*}
    \iota_t^*\beta = \frac{\lambda_0}{1 - \norm{\bfq+i\bfp}^4} \;.
  \end{equation*}
  This is precisely the Liouville form on the unit disk given in
  Example~\ref{example: ideal Liouville domain}.(a).
  It follows that the page of the abstract open book is
  $(\overline{\DD}^{2n-2}, \omega)$ just as we wanted to show.
  Consider now the vector field
  \begin{equation*}
    Y = 2\pi\,\bigl(x_1\,\partial_{y_1} - y_1\,\partial_{x_1}\bigr)
  \end{equation*}
  on $\SS^{2n-1}$.
  Clearly, $Y$ vanishes along $K = \{x_1=y_1=0\}$, and it satisfies
  $d\vartheta(Y) = 2\pi$.
  Furthermore $\lie{Y}\beta = 0$, so that its flow preserves the
  Liouville structures induced by $d\beta$ on each page.
  We obtain that $Y$ is a spinning vector field and since its time-$1$
  flow is the identity on all of $\SS^{2n-1}$, it follows in
  particular that the monodromy of the Liouville open book is trivial.
  \textbf{(b)} The second open book decomposition~$(K, \vartheta)$ of
  the sphere is determined by the function
  $g_2(\z) = z_1^2+ \dotsm + z_n^2$.
  Remember that in Example~\ref{example: ideal Liouville domain}.(b)
  we described ideal Liouville domains on the closed unit disk
  cotangent bundles~$\overline{\DD}(T^*L)$ with Liouville form
  $\frac{1}{1-\norm{\bfp}^2}\,\lcan$.
  For the special case $L=\SS^{n-1}$ we can significantly simplify
  these manifolds by using the identification
  $\overline{\DD}(T^*\SS^{n-1}) \coloneqq \bigl\{(\bfq,\bfp)\in \RR^n
  \times \RR^n\bigm|\, \norm{\bfq} = 1, \, \bfq\perp\bfp, \,
  \norm{\bfp} \le 1\bigr\}$
  with Liouville form
  $- \frac{1}{1-\norm{\bfp}^2}\, \sum_{j=1}^n p_j\,dq_j$.
  We will now show that this ideal Liouville domain is the page of the
  abstract open book corresponding to $(K,\vartheta)$.
  We embed the unit disk bundle into $\SS^{2n-1}$ via
  \begin{equation*}
    \iota_t\colon (\bfq,\bfp) \mapsto
    \frac{\bigl(\bfq + i\bfp\bigr)\, e^{i t/2}}{\sqrt{1 + \norm{\bfp}^2}} \;.
  \end{equation*}
  The image of each such map is one of the pages.
  Pulling back $\alpha_0$, we obtain
  $\iota_t^* \alpha_0 = \frac{1}{2\,(1 + \norm{\bfp}^2)}\,
  \sum_{j=1}^{n-1} (q_j\,dp_j - p_j\,dq_j)$.
  This can be simplified using that the differential of
  $\langle \bfq, \bfp\rangle = 0$ is
  $\sum_{j=1}^{n-1} (q_j\,dp_j + p_j\,dq_j) = 0$ so that
  $\iota_t^* \alpha_0 = -\frac{1}{1 + \norm{\bfp}^2}\,
  \sum_{j=1}^{n-1} p_j\,dq_j$.
  We claim that $\beta := \alpha_0/\abs{g_2}$ is the binding form for
  a Liouville open book.
  Note that
  $\abs{g_2\circ\iota_t} = \frac{1 - \norm{\bfp}^2} {1 +
    \norm{\bfp}^2}$,
  so that
  $\iota_t^*\beta = - \frac{1}{1 - \norm{\bfp}^2} \, \sum_{j=1}^{n-1}
  p_j\,dq_j$, which is the Liouville form given on the domain above.
  This shows that the page of the abstract open book is indeed
  $\bigl(\overline{\DD}(T^*\SS^{n-1}), \omega\bigr)$.
  It remains to show that the monodromy is a generalized Dehn twist as
  we had already claimed in Example~\ref{example: contact open book
    decomposition for sphere}.
  Recall first that a \defin{Dehn twist} {on $T^*\SS^{n-1}$} can
  be written as follows:
  Identify the cotangent bundle of $\SS^{n-1}$ with the submanifold of
  $\RR^n\times \RR^n = \CC^n$ consisting of pairs of
  points~$(\bfq,\bfp)$ such that $\norm{\bfq} = 1$ and $\bfp
  \perp \bfq$.
  The canonical $1$-form~$\lcan$ on $T^*\SS^{n-1}$ is simply the
  restriction of the $1$-form $- \sum_{j=1}^n p_j\,dq_j$ to
  $T^*\SS^{n-1}$.
  Then we can write down the following type of symplectomorphisms with
  compact support
  \begin{equation*}
    \Phi\colon T^*\SS^{n-1} \to T^*\SS^{n-1},
    \begin{pmatrix}
      \bfq \\
      \bfp 
    \end{pmatrix}
    \mapsto
    \begin{pmatrix}
      \bfq\,\cos \rho + \frac{\bfp}{\norm{\bfp}}\, \sin \rho \\
      -\norm{\bfp}\,\bfq\,\sin \rho + \bfp\,\cos \rho
    \end{pmatrix} \;,
  \end{equation*}
  where $\rho(\bfq,\bfp) \coloneqq \rho(\norm{\bfp})$ is any smooth
  function that is $0$ for very large values of $\norm{\bfp}$ and is
  equal to $-\pi$ on a neighborhood of $0$.
  {Dividing} by $\norm{\bfp}$ in the definition of $\Phi$ is not
  problematic, because $\sin \rho$ vanishes close to the zero section
  of $T^*\SS^{n-1}$.
  A direct verification shows that $\Phi$ preserves the
  length~$\norm{\bfp}$, that it has compact support, and pulling back
  the canonical $1$-form using that $d\norm{\bfq}^2 = d1 = 0$ and that
  $\sum_j p_j \,dq_j = - \sum_j q_j\, dp_j$, we obtain
  \begin{equation*}
    \Phi^*\lcan = \lcan - \norm{\bfp}\, d\rho \;,
  \end{equation*}
  which implies that $\Phi$ is indeed a symplectomorphism, because
  $\rho$ only depends on $\norm{\bfp}$.
  Let us pull-back the generalized Dehn twist to the ideal Liouville
  domain $\bigl(\overline{\DD}(T^*\SS^{n-1}), \omega\bigr)$.
  As in Example~\ref{example: ideal Liouville domain}.(b), we stretch
  out the interior of the unit disk bundle to cover the full cotangent
  bundle using the map
  $(\bfq, \bfp) \mapsto \bigl(\bfq, \frac{1}{1-\norm{\bfp}^2}\,
  \bfp\bigr)$.
  The inverse of this map is $(\bfq, \bfp) \mapsto \bigl(\bfq,
  \frac{\sqrt{4\norm{\bfp}^2 +1} -1}{2\,\norm{\bfp}^2}\, \bfp\bigr)$.
  We pull back the Dehn twist to the ideal Liouville domain and find
  that it still is of the same form as on $T^*\SS^{n-1}$, only that
  the function $\rho(\norm{\bfp})$ needs to be replaced by
  $\rho\bigl(\frac{\norm{\bfp}}{1-\norm{\bfp}^2}\bigr)$.
  {Since the monodromy map of an abstract Liouville open book
    needs to be the identity only on the boundary of the domain, we
    weaken our definition by requiring that $\rho$ vanishes for
    $\norm{\bfp} = 1$, but not necessarily also on a neighborhood of
    $1$.
    Also instead of imposing that $\rho = -\pi$ on a small
    neighborhood of the zero section, it is sufficient for us that
    this condition is met \emph{on} the zero section itself (being
    careful to preserve the smoothness of $\Phi$).}
  We thank the referee for providing us with the following
    concise definition:

    \begin{definition}\label{definition: dehn twist}
      Choose any smooth function $g\colon [0,1] \to \RR$ such that
      $g(1) = \pi$.
      A \defin{Dehn twist on the ideal Liouville domain
        $\bigl(\overline{\DD}(T^*\SS^{n-1}), \omega\bigr)$} is a map
      of the form
      \begin{equation*}
        \Phi\colon \overline{\DD}(T^*\SS^{n-1}) \to \overline{\DD}(T^*\SS^{n-1}),
        \begin{pmatrix}
          \bfq \\
          \bfp
        \end{pmatrix}
        \mapsto
        \begin{pmatrix}
          \bfq\,\cos \rho + \frac{\bfp}{\norm{\bfp}}\, \sin \rho \\
          -\norm{\bfp}\,\bfq\,\sin \rho + \bfp\,\cos \rho
        \end{pmatrix} \;,
      \end{equation*}
      where $\rho(\bfq,\bfp)\coloneqq \rho(\norm{\bfp})$ can be
      written as $\rho(r) = r\, g(r^2) -\pi$.
    \end{definition}

    To show that this more general definition is suitable, we must verify that
  $\Phi$ is smooth along the $0$-section of $T^*\SS^{n-1}$.
  For this, observe that if $f\colon \RR\to \RR$ is a smooth
  even function, the composition $\bfp \mapsto f(\norm{\bfp})$
  will also be smooth.

  Expand  the trigonometric functions to see that
  \begin{equation*}
    \sin \rho(r) = - \sin \bigl(r\, g(r^2)\bigr)\quad \text{ and }\quad
    \cos \rho(r) = - \cos\bigl(r\, g(r^2)\bigr) \;.
  \end{equation*}
  The second function is clearly even. For the first one, notice that
  $-\sin \bigl(r\, g(r^2)\bigr)$ is odd and vanishes at $r=0$. We may
  therefore write it as $r\, h(r)$ for a smooth $h$ that necessarily
  needs to be even.
  This then implies that both $\frac{1}{r}\, \sin \rho(r) = h(r)$ and
  $ r\,\sin \rho(r) = r^2\, h(r)$ are well-defined and even, and
  thus $\Phi$ is everywhere smooth.
  {From this,} every Dehn twist lies in
  $\Diff_\p \bigl(\overline{\DD}(T^*\SS^{n-1}), \omega\bigr)$, and
  the space of Dehn twists is contractible. 

\vspace{0.3cm}

  Recall that the monodromy of the open book is obtained as the
  restriction to a page of the time-$1$ flow of a spinning vector
  field.
  A long but straight-forward computation shows that the vector
  field~$Y$ specified by Lemma~\ref{lemma: exists vector field to
    recover monodromy} is
  \begin{equation*}
    Y =  \pi\,\RealPart(g_2)\,\sum_{j=1}^n (y_j\, \frac{\p}{\p x_j} +
    x_j\, \frac{\p}{\p y_j})
    + \pi\,\ImaginaryPart(g_2)\,\sum_{j=1}^n (y_j\, \frac{\p}{\p y_j} -
    x_j\, \frac{\p}{\p x_j}) \;,
  \end{equation*}
  or equivalently using the Wirtinger formalism, we can write $Y$ as
  \begin{equation*}
    Y = \pi i\, g_2 \cdot \sum_{j=1}^n\bar z_j \, \frac{\p}{\p z_j}
    -  \pi i\, \bar g_2 \cdot \sum_{j=1}^n z_j \, \frac{\p}{\p \bar z_j} \;,
  \end{equation*}
  where we have used that $\frac{\p}{\p z_j} \coloneqq \frac{1}{2}\,
  \bigl(\frac{\p}{\p x_j} - i\, \frac{\p}{\p y_j}\bigr)$.
  The arguments explained in
  \cite{McDuffSalamonIntro}*{Exercise~6.20}, allow us to find the
  flow~$\Phi_t^Y$ of this vector field:
  Combining $\vartheta = g_2/\abs{g_2}$ with the normalization of $Y$ we
  obtain the equation
  \begin{equation*}
    \frac{g_2(\Phi^Y_t(\z))}{\abs{g_2(\Phi^Y_t(\z))}} = e^{2\pi it} \;.
  \end{equation*}
  We also see easily that the flow of $Y$ preserves $\abs{g_2}$,
  because $\lie{Y}\abs{{g_2}}^2 = 0$.
  Let $\z(t)$ be the trajectory of $Y$ starting at a point $\z(0)$.
  To simplify the notation write $g_0$ instead of
  $\abs{g_2\bigl(\z(0)\bigr)}$. Then $\z(t)$ is the solution to the
  ordinary differential equation
  \begin{equation*}
    \dot \z(t) = \pi i\, g_2\bigl(\z(t)\bigr)\cdot \bar \z(t)
    = \pi i\, e^{2\pi it}\,  g_0 \cdot \bar \z(t) \;.
  \end{equation*}
  Defining $\bfu(t) := e^{-\pi it}\, \z(t)$, we compute
  \begin{equation*}
    \dot \bfu(t) = -  \pi i\, \bfu(t) + e^{-\pi i t}\, \dot \z(t) = 
    - \pi i\, \bfu(t) +  \pi i\,  g_0 \cdot \bar \bfu(t)\;.
  \end{equation*}
  Splitting $\bfu$ into real and imaginary parts $\bfu_x + i \bfu_y$,
  the previous equation can be written as
  $\dot \bfu_x(t) = \pi\, (1 + g_0)\, \bfu_y(t)$ and
  $\dot \bfu_y(t) = - \pi\,(1-g_0)\, \bfu_x(t)$, which combines back
  to
  \begin{equation*}
    \ddot \bfu(t) = - \pi^2\,(1-g_0^2)\, \bfu(t) \;.
  \end{equation*}
  The general solution of this equation is
  $\bfu(t) = A_+\,e^{\pi ict} + A_-\,e^{-\pi ict}$ with
  $c := \sqrt{1-g_0^2}$ so that
  \begin{equation*}
    \z(t) = A_+\,e^{\pi i\, (c+1)\,t} +  A_-\,e^{-\pi i\, (c-1)\,t} \;,
  \end{equation*}
  where $A_+,A_-\in \CC^n$ are complex vectors.
  The coefficients are
  \begin{equation*}
    A_\pm = \frac{1}{2}\,
    \left(1\mp\sqrt{\frac{1-g_0}{1+g_0}}\right)\,\x(0) + \frac{i}{2}\,
    \left(1\mp\sqrt{\frac{1+g_0}{1-g_0}}\right)\,\y(0) \;.
  \end{equation*}
  so that we find for $\z(1)$
  \begin{equation*}
    \z(1) = - \z(0)\, \cos \pi\,\sqrt{1-g_0^2}
    + i\,\Bigl(\sqrt{\frac{1-g_0}{1+g_0}} \,\x(0)
    +  i\, \sqrt{\frac{1+g_0}{1-g_0}}\,\y(0)  \Bigr)\, \sin\pi\,
    \sqrt{1-g_0^2} \;.
  \end{equation*}
  To recover the monodromy of the abstract open book, restrict the
  time-$1$ flow of $Y$ to the $0$-page of the contact open book, and
  pull back the diffeomorphism obtained this way to the abstract page
  via the embedding~$\iota_0$.
  Recall first that
  $g_2(\iota_0(\bfq + i\bfp)) = \frac{1 - \norm{\bfp}^2} {1 +
    \norm{\bfp}^2}$.
  Then we get
  \begin{multline*}
    \iota_0^{-1}\circ \Phi_1^Y \circ \iota_0(\bfq + i\bfp) = -
    \Bigl(\bfq\, \cos \frac{2\pi\, \norm{\bfp}}{1 + \norm{\bfp}^2} +
    \frac{\bfp}{\norm{\bfp}}\, \sin \frac{2\pi\, \norm{\bfp}}{1 + \norm{\bfp}^2} \Bigr)\\
    - i\, \Bigl(- \norm{\bfp} \, \bfq\, \sin \frac{2\pi\,
      \norm{\bfp}}{1 + \norm{\bfp}^2} + \bfp\, \cos \frac{2\pi\,
      \norm{\bfp}}{1 + \norm{\bfp}^2}\Bigr) \;.
  \end{multline*}
  This is just a Dehn twist as in Definition~\ref{definition: dehn
    twist} with the function
  $\rho(\bfq, \bfp) := \frac{2\pi\, \norm{\bfp}}{1 + \norm{\bfp}^2} -
  \pi = \norm{\bfp}\, g\bigl(\norm{\bfp}^2\bigr) - \pi$,
  where $g(r)= 2\pi/(1 + r)$.
  Clearly $g(1) = 2\pi /2 = \pi$, as desired.
\end{example}

\begin{example}\label{example:trivial monodromy}
  We will now consider the case of a contact open book with trivial
  monodromy, and show that this manifold is symplectically filled by
  the stabilization of one of its pages.
  Using Theorem~\ref{theorem: equivalence abstract and embedded open
    books}, we will argue in the opposite direction, namely we take
  the stabilization of an arbitrary Liouville domain, and show that it
  contains a contact-type hypersurface that is supported by an open
  book with trivial monodromy and whose pages are isomorphic to the
  initial Liouville domain.
  We conclude, using that any contact manifold with an open book
  decomposition with trivial monodromy can be obtained via this
  construction.
  Let $(F, d\beta)$ be a (classical) Liouville domain with
  boundary~$\p F = K$ and with associated Liouville vector
  field~$X_L$, i.e.~so that $d\beta(X_L, \cdot) = \beta$.
  Choose a function $u \colon F \to [0, \infty)$ as in
  Lemma~\ref{lemma: function u for completion}, that is, $0$ is a
  regular value, $u^{-1}(0) = K$, and $du(X_L) < u$.
  In the case that $(F, d\beta)$ is a Weinstein domain with a Lyapunov
  function~$f$ for $X_L$ with $f^{-1}(C) = \p F$, we can simply set
  $u := C-f$.
  Let now $V \subset F \times \CC$ be the hypersurface defined by
  \begin{equation*}
    V = \{ (p, z) \in F \times \CC \, | \, u(p) - \abs{z}^2 = 0 \} \;.
  \end{equation*}
  From our hypothesis on $u$ it follows that $0$ is a regular value of
  $u(p)-\abs{z}^2$, so that $V$ is a closed embedded submanifold
  (touching the boundary of $F\times \CC$ from the inside).
  The manifold~$F \times \CC$ has an exact symplectic structure given
  by $d\bigl( \beta + \frac{1}{2}\, ( x\, dy - y\, dx) \bigr)$ where
  $z = x+iy$ denotes the coordinate on $\CC$.
  The corresponding Liouville field is
  $X_L + \frac{1}{2}\,\bigl(x\,\partial_x + y\, \partial_y\bigr)$.
  This vector field is transverse to $V$, because
  \begin{equation*}
    du(X_L) - \abs{z}^2 = du(X_L) - u < 0\;,
  \end{equation*}
  by the properties in Lemma~\ref{lemma: function u for completion},
  and it follows that the restriction of
  $\beta + \frac{1}{2}\, \bigl(x\, dy - y\, dx \bigr)$ to $V$ defines
  a contact form~$\alpha$ on $V$.

  \vspace{0.5cm}

  The open book we consider is obtained by taking the binding to be
  $K = K \times \{ (0,0) \} \subset F \times \CC$, and the defining
  map $f \colon V \to \CC$ by $f(p, z) = z$.
  It is not very difficult to check that $f$ really defines a smooth
  open book decomposition on $V$, so that we will only show that the
  pair $(\alpha, f)$ is a representation of a \emph{contact} open
  book.
  The closure of any page is diffeomorphic to $F$, since it is then
  the set of points
  $\{ (p, r\e^{i \vartheta} ) \in V \, | \, r \ge 0, p \in F \}$.
  This admits an ``obvious'' identification with $F$, given by
  \begin{equation*}
    p \mapsto \bigl(p, \sqrt{u(p)}\, \e^{i \vartheta}\bigr) \;,
  \end{equation*}
  but unfortunately this map fails to be smooth up to the boundary
  (compare to the footnote of Example~\ref{example: from open book to
    abstract open book}).
  Instead, we will need to pre-compose it with a
  homeomorphism~$\varphi \colon F\to F$ that is a diffeomorphism on
  the interior, and that maps the collar neighborhood
  $(-\epsilon, 0] \times K \to (-\epsilon, 0] \times K$ by
  $(s, x) \mapsto \bigl(g(s), x\bigr)$, where $g(s) = s^2$ for $s$
  near $0$, $g(s) = s$ for $s$ near $-\epsilon$ and $g'(s) > 0$ for
  $s < 0$ (this then extends as the identity of $F$ away from the
  collar neighborhood).
  We denote the composition~$\sqrt{u\circ \varphi}$ by $\tilde u$, and
  observe that
  \begin{equation*}
    \Phi\colon F\hookrightarrow V\times \CC,\,
    p \mapsto \bigl(p, \tilde u(p)\, \e^{i \vartheta}\bigr)
  \end{equation*}
  is a smooth embedding of $F$ into $V$.
  Equivalently, we could have treated this as a change of smooth
  structure at the boundary of $F$.
  (This is related to the discussion of smoothness immediately
  preceding Proposition~21 in \cite{GirouxIdealDomains}.)
  The resulting ideal Liouville form on the page is given by the
  restriction of $\alpha/\abs{z}$, which pulls-back to the $1$-form
  \begin{equation*}
    \lambda \coloneqq \Phi^*\Bigl(\frac{1}{\abs{z}}\, \alpha\Bigr)
    = \frac{1}{\tilde u}\, \beta
  \end{equation*}
  on $\Int F$.
  We now claim this gives $(F, d\lambda)$ an ideal Liouville
  structure.
  This requires that $v\,\lambda$ extends to a contact form on $K$ for
  any smooth function~$v\colon F\to [0,\infty)$ for which
  $K = v^{-1}(0)$ is a regular level set.
  The function~$\tilde u$ introduced above is such a function, and
  clearly $\tilde u\, \lambda$ agrees with the $1$-form~$\beta$ that
  is a contact form on $K$.
  To verify that $d\lambda$ is indeed symplectic on
  $F \setminus \p F$, write $r = \abs{z} = \sqrt{u}$ and compute
  \emph{in the interior of $F$}:
  \begin{equation*}
    r^{n+2} \left [ d \Bigl(\frac{1}{r}\beta \Bigr) \right ]^n
    =  r^2\, (d\beta)^n - \frac{n}{2}\, d(r^2) \wedge \beta \wedge (d\beta)^{n-1} 
    = u\, (d\beta)^{n} - \frac{n}{2}\, du \wedge \beta \wedge
    (d\beta)^{n-1} \;.
  \end{equation*}
  Now, contracting $X_L$ with $0 = du \wedge (d\beta)^n$, we obtain
  the identity
  \begin{equation*}
    0 = du(X_L)\, (d\beta)^n - n\, du \wedge \beta \wedge (d\beta)^{n-1} \;.
  \end{equation*}
  It now follows that
  \begin{equation*}
    r^{n+2} \left [ d \Bigl(\frac{1}{r}\beta \Bigr) \right ]^n 
    = \frac{1}{2}\,\Bigl(2u -  du(X_L)\Bigr)\, (d\beta)^n
  \end{equation*}
  is a positive volume form on $F$, because $u$ satisfies the
  assumptions in Lemma~\ref{lemma: function u for completion}, and we
  have that $u - du(X_L) > 0$.
  This shows that $(F, d\lambda)$ is symplectomorphic to the
  completion of $(F, d\beta)$.
  Finally, to compute the monodromy, we notice that by our
  construction, $\beta$ itself is a binding form on $V$, and thus
  $2\pi \partial_\vartheta$ is a spinning vector field.
  Its monodromy is indeed the identity map.
  Any Liouville page~$F$ can be used as the starting point for this
  construction.
  If, additionally, $F$ is a Weinstein domain, we obtain $V$ as the
  boundary of an explicit subcritical filling.
\end{example}

\bibliographystyle{amsalpha}
\bibliography{main}

\end{document}